\numberwithin{equation}{section}
\newtheorem{theorem}[equation]{Theorem}
\newtheorem{proposition}[equation]{Proposition}
\newtheorem{lemma}[equation]{Lemma}
\newtheorem{corollary}[equation]{Corollary}
\theoremstyle{definition}
\newtheorem{definition}[equation]{Definition}
\newtheorem{remark}[equation]{Remark}
\newtheorem{example}[equation]{Example}
\def\C{\mathbb C}
\def\Dom{\mathcal{D}}
\def\E{\mathfrak E}
\def\F{\mathcal F}
\def\K{\mathcal K}
\def\L{\mathscr L}
\def\M{\mathfrak M}
\def\N{\mathbb N}
\def\R{\mathbb R}
\def\S{\mathscr S}
\def\Sing{\mathcal E}
\def\Oh{\mathcal O}
\def\Vee{\mathcal V}
\def\W{\mathcal W}
\def\Z{\mathbb Z}
\def\Strip{S}
\def\smallset#1{\{#1\}}
\def\a{\mathfrak a}
\def\im{i}
\def\ie{i.e.}
\def\id{I}
\def\st{\text{ s.t. }}
\DeclareMathOperator{\ord}{ord}
\DeclareMathOperator{\coeff}{c}
\DeclareMathOperator{\resolv}{res}
\def\bkappa{\pmb \kappa}
\def\csym{\,{}^c\!\sym}
\def\e{\mathrm e}
\def\eps{\varepsilon}
\def\m{\mathfrak m}
\def\minus{\backslash}
\def\norm#1{\left\|{#1}\right\|}
\def\open#1{\smash[t]{\overset{{}_{\circ}}{#1}{}}}
\def\r{\mathscr R}
\def\set#1{\left\{#1\right\}}
\def\vp{\varphi}
\def\Gr{\mathrm{Gr}}
\DeclareMathOperator{\LinSpan}{span}
\DeclareMathOperator{\Aut}{Aut}
\DeclareMathOperator{\bgres}{bg-res}
\DeclareMathOperator{\dist}{dist}
\DeclareMathOperator{\End}{End}
\DeclareMathOperator{\Ind}{ind}
\DeclareMathOperator{\hol}{hol}
\DeclareMathOperator{\Hom}{Hom}
\DeclareMathOperator{\Tr}{Tr}
\DeclareMathOperator{\Diff}{Diff}
\DeclareMathOperator{\spec}{spec}
\DeclareMathOperator{\sym}{ \sigma\!\!\!\sigma}
\begin{document}
\title{Dynamics on Grassmannians and resolvents of cone operators}
\author{Juan B. Gil}
\author{Thomas Krainer}
\address{Penn State Altoona\\ 3000 Ivyside Park \\ Altoona, PA 16601-3760}
\author{Gerardo A. Mendoza}
\address{Department of Mathematics\\ Temple University\\ 
Philadelphia, PA 19122}
\begin{abstract}
The paper proves the existence and elucidates the structure of the asymptotic expansion of the trace of the resolvent of a closed extension of a general elliptic cone operator on a compact manifold with boundary as the spectral parameter tends to infinity. The hypotheses involve only minimal conditions on the symbols of the operator. The results combine previous investigations by the authors on the subject with an analysis of the asymptotics of a family of projections related to the domain. This entails a fairly detailed study of the dynamics of a flow on the Grassmannian of domains.
\end{abstract}
\subjclass[2000]{Primary: 58J35; Secondary: 37C70, 35P05, 47A10}
\keywords{Resolvents, trace asymptotics, manifolds with conical singularities, spectral theory, dynamics on Grassmannians}

\maketitle

\section{Introduction}

In \cite{GKM5a} we analyzed the behavior of the trace of the resolvent of an elliptic cone operator on a compact manifold as the spectral parameter increases radially assuming, in addition to natural ray conditions on its symbols, that the domain is stationary. Here we complete our analysis with Theorem~\ref{ResolventTraceExpansion2}, which describes the behavior of the aforementioned trace without any restriction on the domain. The main new ingredient is Theorem~\ref{AsymptOfProjB} on the asymptotics of a family of projections related to the domain. This involves a fairly detailed analysis of the dynamics of a flow on the Grassmannian of domains.

Let $M$ be a smooth compact $n$-dimensional manifold with boundary $Y$. A cone operator on $M$ is an element $A\in x^{-m}\Diff^m_b(M;E)$, $m>0$; here $\Diff^m_b(M;E)$ is the space of $b$-differential operators of Melrose \cite{RBM2} acting on sections of a vector bundle $E\to M$ and $x$ is a defining function of $Y$ in $M$, positive in $\open M$. Associated with such an operator is a pair of symbols, the $c$-symbol $\csym(A)$ and the wedge symbol $A_\wedge$. The former is a bundle endomorphism closely related to the regular principal symbol of $A$, indeed ellipticity is defined as the invertibility of $\csym(A)$. The wedge symbol is a partial differential operator on $N_+Y$, the closed inward pointing normal bundle of $Y$ in $M$, essentially the original operator with coefficients frozen at the boundary. See \cite[Section 2]{GKM5a} for a brief overview and \cite[Section 3]{GKM1} for a detailed exposition of basic facts concerning cone operators.

Fix a Hermitian metric on $E$ and a smooth positive $b$-density $\m_b$ on $M$ ($x\m_b$ is a smooth everywhere positive density on $M$) to define the spaces $x^\gamma L^2_b(M;E)$. Let $A$ be a cone operator. The unbounded operator
\begin{equation}\label{OrgOp}
A:C_c^\infty(\open M;E)\subset x^\gamma L^2_b(M;E)\to x^\gamma L^2_b(M;E)
\end{equation}
admits a variety of closed extensions with domains $\Dom\subset x^\gamma L^2_b(M;E)$ such that $\Dom_{\min}\subset \Dom\subset \Dom_{\max}$, where $\Dom_{\min}$ is the domain of the closure of \eqref{OrgOp} and 
\begin{equation*}
\Dom_{\max}=\set{u\in x^\gamma L^2_b(M;E):Au\in x^\gamma L^2_b(M;E)}.
\end{equation*}
When $A$ is $c$-elliptic,  $A$ is Fredholm with any such domain (Proposition 1.3.16 of Lesch \cite{Le97}). The set of closed extensions is parametrized by the elements of the various Grassmannian manifolds associated with the finite-dimensional space $\Dom_{\max}/\Dom_{\min}$, a useful point of view exploited extensively in \cite{GKM1}. Without loss of generality, assume $\gamma=-m/2$. 

Associated with $N_+Y$ there are analogous Hilbert spaces $x_\wedge^{-m/2} L^2_b(N_+Y;E_\wedge)$. Here $x_\wedge$ is the function determined by $dx$ on $N_+Y$, $E_\wedge$ is the pull back of $E|_Y$ to $N_+Y$, and the density is $x_\wedge^{-1}\m_Y$ where $\m_Y$ is the density on $Y$ obtained by contraction of $\m_b$ with $x\partial_x$. We will drop the subscript $\wedge$ from $x_\wedge$ and $E_\wedge$, and trivialize $N_+Y$ as $Y^\wedge=[0,\infty)\times Y$ using the defining function. The space $x^{-m/2} L^2_b(Y^\wedge;E)$ carries a natural unitary $\R_+$ action $(\varrho,u)\mapsto \kappa_\varrho u$ which after fixing a Hermitian connection on $E$ is given by
\begin{equation*}
\kappa_\varrho u(x,y)=\varrho^{m/2}u(\varrho x,y)\; \text{ for } \varrho>0,\ (x,y)\in Y^\wedge.
\end{equation*}

The minimal and maximal domains, $\Dom_{\wedge,\min}$ and $\Dom_{\wedge,\max}$, of $A_\wedge$ are defined in an analogous fashion as those of $A$, the first of these spaces being the domain of the closure of
\begin{equation}\label{OrgOpWedge}
A_\wedge:C_c^\infty(\open Y^\wedge;E)\subset x^{-m/2} L^2_b(Y^\wedge;E)
\to x^{-m/2} L^2_b(Y^\wedge;E).
\end{equation}
A fundamental property of $A_\wedge$ is its $\kappa$-homogeneity, $\kappa_\varrho A_\wedge = \varrho^{-m} A_\wedge\kappa_\varrho$. Thus $\Dom_{\wedge,\min}$ and $\Dom_{\wedge,\max}$ are both $\kappa$-invariant, hence there is an $\R_+$ action 
\begin{equation*}
 \varrho\mapsto
 \pmb \kappa_{\varrho}:\Dom_{\wedge,\max}/\Dom_{\wedge,\min}\to \Dom_{\wedge,\max}/\Dom_{\wedge,\min}
\end{equation*}
which in turn induces for each $d''$ an action on $\Gr_{d''}(\Dom_{\wedge,\max}/\Dom_{\wedge,\min})$, the Grassmannian of $d''$-dimensional subspaces of $\Dom_{\wedge,\max}/\Dom_{\wedge,\min}$. Observe that since the quotient is finite dimensional these actions extend holomorphically to $\C\minus \overline{\R}_{-}$.

Assuming the $c$-ellipticity of $A$ we constructed in \cite[Theorem 4.7]{GKM1} and reviewed in \cite[Section 2]{GKM5a} a natural isomorphism
\begin{equation*}
\theta: \Dom_{\max}/\Dom_{\min} \to \Dom_{\wedge,\max}/\Dom_{\wedge,\min}
\end{equation*}
allowing in particular passage from a domain $\Dom$ for $A$ to a domain $\Dom_{\wedge}$ for $A_\wedge$ which we shall call the associated domain.

We showed in \cite{GKM2} that if
\begin{equation}\label{RayConditions}
\mbox{
\parbox{100mm} {
$\csym(A)-\lambda$ is invertible for $\lambda$ in a closed sector
$\Lambda\subsetneq \C$ which is a sector of minimal growth for $A_\wedge$ with
the associated domain $\Dom_\wedge$ defined via
$\Dom_\wedge/\Dom_{\wedge,\min}=\theta(\Dom/\Dom_{\min})$,
}}
\end{equation}
then $\Lambda$ is also a sector of minimal growth for $A_\Dom$, the operator $A$ with domain $\Dom$, and for $\ell\in\N$ sufficiently large, $(A_{\Dom}-\lambda)^{-\ell}$ is an analytic family of trace class operators. In \cite{GKM5a} we gave the asymptotic expansion of
$\Tr(A_\Dom-\lambda)^{-\ell}$ under the condition that $\Dom$ was stationary. Recall that a subspace $\Dom\subset \Dom_{\max}$ with $\Dom_{\min}\subset\Dom$ is said to be stationary if $\theta(\Dom/\Dom_{\wedge,\max})\in\Gr_{d''}(\Dom_{\wedge,\max}/\Dom_{\wedge,\min})$ is a fixed point of the action $\bkappa$. More generally, assuming only \eqref{RayConditions}, we now prove:

\begin{theorem}\label{ResolventTraceExpansion2}
For any $\varphi \in C^{\infty}(M;\textup{End}(E))$ and $\ell\in\N$ with $m\ell>n$,
\begin{equation*}
\Tr \bigl(\varphi(A_{\Dom}-\lambda)^{-\ell}\bigr) \sim \sum_{j=0}^{\infty}
r_j(\lambda^{i\mu_{1}},\ldots,\lambda^{i\mu_{N}},\log\lambda) \lambda^{\nu_j/m} 
\; \text{ as } |\lambda| \to \infty,
\end{equation*}
where each $r_j$ is a rational function in $N+1$ variables, $N \in \N_0$, with real numbers $\mu_{k}$, $k = 1,\ldots,N$, and $\nu_j > \nu_{j+1} \to -\infty$ as
$j \to \infty$. We have $r_j = p_j/q_j$ with $p_j,\; q_j \in \C[z_1,\ldots,z_{N+1}]$ such that $q_j(\lambda^{i\mu_{1}},\ldots,\lambda^{i\mu_{N}},\log\lambda)$ is
uniformly bounded away from zero for large $\lambda$.
\end{theorem}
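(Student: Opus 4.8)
\section*{Proof proposal}

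The plan is to reduce the computation of $\Tr\bigl(\varphi(A_{\Dom}-\lambda)^{-\ell}\bigr)$ to two pieces by means of the resolvent parametrix construction of \cite{GKM2,GKM5a}: an \emph{interior} piece, assembled from a parametrix of $A-\lambda$ away from $Y$ together with the $c$-symbol $\csym(A)$, whose trace already possesses a classical expansion $\sum_j c_j\lambda^{\nu_j/m}$ (with possibly finitely many integer powers of $\log\lambda$) and $\nu_j\to-\infty$; and a \emph{model} piece supported near $Y$ which is governed entirely by the wedge operator $A_\wedge$ acting on its associated domain $\Dom_\wedge$. The only place where the non-stationarity of $\Dom$ can enter is the model piece, so the theorem comes down to understanding, as $|\lambda|\to\infty$, the asymptotics of $\Tr\bigl(\psi\,(A_{\wedge,\Dom_\wedge}-\lambda)^{-\ell}\bigr)$ for a suitable cutoff-endomorphism $\psi$ on $Y^\wedge$ obtained from $\varphi|_Y$.

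Next I would express $(A_{\wedge,\Dom_\wedge}-\lambda)^{-1}$ in terms of a fixed reference extension $A_{\wedge,\Dom_0}$ plus a finite-rank correction that is completely explicit in terms of the projection $\Pi_{\Dom_\wedge}$ onto $\Dom_\wedge/\Dom_{\wedge,\min}$ inside $\Dom_{\wedge,\max}/\Dom_{\wedge,\min}$ (equivalently, onto $\theta(\Dom/\Dom_{\min})$) and of pairings of $(A_{\wedge,\Dom_0}-\lambda)^{-1}$ against a fixed finite-dimensional space of singular functions — a Krein-type resolvent formula for finite-dimensional perturbations of the boundary condition. Using the $\kappa$-homogeneity $\kappa_\varrho A_\wedge=\varrho^{-m}A_\wedge\kappa_\varrho$ one conjugates by $\kappa_{|\lambda|^{1/m}}$, normalizing the spectral parameter to the unit circle and transferring the $\lambda$-dependence onto the domain: the relevant projection becomes the one attached to the point at parameter $|\lambda|^{-1/m}$ on the $\bkappa$-orbit through $\theta(\Dom/\Dom_{\min})$ in $\Gr_{d''}(\Dom_{\wedge,\max}/\Dom_{\wedge,\min})$. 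The asymptotic behavior of this projection family as $|\lambda|\to\infty$ is precisely Theorem~\ref{AsymptOfProjB}.

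Granting Theorem~\ref{AsymptOfProjB}, which describes that projection family in terms of finitely many real frequencies $\mu_k$ (imaginary parts of eigenvalues of the infinitesimal generator of the $\bkappa$-action, restricted to a fixed real-part level of the orbit closure) together with powers of $\log\lambda$, I would substitute this description into the Krein formula and take the trace. The correction term has the shape $\Tr\bigl(B(\lambda)\,G(\lambda)^{-1}\,C(\lambda)\bigr)$ with $G(\lambda)$ a finite matrix whose entries are finite sums of terms $\lambda^{i\mu_k}(\log\lambda)^{a}\lambda^{b/m}$ arising from pairing the $\lambda$-twisted singular functions through $(A_{\wedge,\Dom_0}-\lambda)^{-1}$ (whose own expansion is classical by \cite{GKM5a}) and from the projection asymptotics. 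Expanding $G(\lambda)^{-1}$ in a geometric series in decreasing powers of $|\lambda|$ around its leading part and collecting terms yields an asymptotic series $\sum_j r_j(\lambda^{i\mu_1},\dots,\lambda^{i\mu_N},\log\lambda)\lambda^{\nu_j/m}$ with each $r_j$ rational; the denominators $q_j$ are powers of $\det$ of the leading block of $G$, which is bounded away from zero for large $|\lambda|$ exactly because $\Lambda$ is a sector of minimal growth for $A_{\wedge,\Dom_\wedge}$ — this is what makes the normalized model resolvent invertible uniformly along the orbit closure. Adding the interior expansion and reindexing gives the stated form.

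I expect the genuine difficulty to lie entirely in Theorem~\ref{AsymptOfProjB}, i.e.\ in the dynamics of the $\bkappa$-flow on $\Gr_{d''}(\Dom_{\wedge,\max}/\Dom_{\wedge,\min})$: the orbit of $\theta(\Dom/\Dom_{\min})$ need not converge to a stationary point, its closure may be a positive-dimensional invariant set, yet one must still produce an asymptotic expansion of the rigid stated form — rational in the oscillatory factors $\lambda^{i\mu_k}$, polynomial in $\log\lambda$, with denominators bounded away from zero. Carrying this out requires decomposing $\Dom_{\wedge,\max}/\Dom_{\wedge,\min}$ into the generalized eigenspaces of the generator of $\bkappa$, analyzing how a $d''$-plane in general position relative to this decomposition evolves (the plane decays onto the top real-part block and there rotates according to the $\mu_k$), and showing that the potentially dangerous resonances among the $\mu_k$ — which would otherwise destroy rationality or the denominator bound — are absorbed into finitely many powers of $\log\lambda$. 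Once the flow is understood, the remainder of the argument is bookkeeping on the parametrix and trace estimates already in place in \cite{GKM2,GKM5a}.
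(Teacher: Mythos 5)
Your proposal follows the paper's architecture closely: reduce the trace via the parametrix split $(A_\Dom-\lambda)^{-1}=B(\lambda)+G_\Dom(\lambda)$, observe that the domain-dependent piece $G_\Dom$ is finite rank and is governed by the wedge operator and the family of projections along the $\bkappa$-orbit of $\theta(\Dom/\Dom_{\min})$, normalize via conjugation by $\kappa_{|\lambda|^{1/m}}$, invoke Theorem~\ref{AsymptOfProjB} for the projection asymptotics, and use the sector-of-minimal-growth hypothesis to keep the denominators bounded away from zero. This is the same route the paper takes, and you correctly locate the genuine difficulty in the Grassmannian dynamics.

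The one place you depart from the paper's bookkeeping is the ``Krein-type'' formula against a fixed reference extension $A_{\wedge,\Dom_0}$. The paper does not introduce a reference extension; instead it builds $(A_\wedge-\lambda,\,K_\wedge(\lambda))$ with a $\kappa$-homogeneous $K_\wedge$, whose block inverse yields $F_{\wedge,\Dom_\wedge}(\lambda)$, and then derives the clean identity $F_{\wedge,\Dom_\wedge}(\lambda)^{-1}=\pi_{D,K_{\wedge,\lambda}}\bkappa_{|\lambda/\lambda_0|^{1/m}}$ (equation~\eqref{FwedgeInvProjection}). Passing from the wedge operator to $A_\Dom$ is then done by a Neumann series: one shows $F(\lambda)\Phi_0(\lambda)-1$ lies in $S_\r^{(-1)^+}$, with $\Phi_0=\theta^{-1}F_{\wedge,\Dom_\wedge}^{-1}$, so $F_\Dom(\lambda)^{-1}=\Phi_0(\lambda)(1+R_1(\lambda))$ stays in the symbol class (Proposition~\ref{FInvStructure2}). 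Your Krein-formula route would land you in the same place — the perturbation is finite rank and its $\lambda$-dependence is again controlled by $\pi_{e^{t\a}D,K}$ — but it adds the bookkeeping of choosing and analyzing a reference extension, which the $K_\wedge(\lambda)$ construction avoids; and the geometric-series inversion you describe is exactly the Neumann-series step carried out inside the symbol class $S_\r^{\nu^+}$ of the appendix, whose closure under composition and asymptotic summation is what makes the inversion legitimate. Other than this, your proposal is a faithful outline of the paper's proof.
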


The above expansion is to be understood as the asymptotic expansion of a symbol into its components as discussed in the appendix. As shown in \cite{GKM5a}, 
\begin{equation*}
\Tr \bigl(\varphi(A_{\Dom}-\lambda)^{-\ell}\bigr) \sim
\sum\limits_{j=0}^{n-1}\alpha_{j}\lambda^{\frac{n-\ell m-j}{m}}
+\alpha_{n}\log(\lambda) \lambda^{-\ell} + s_{\Dom}(\lambda)
\end{equation*}
with coefficients $\alpha_j\in \C$ that are independent of the choice of domain $\Dom$, and a remainder $s_{\Dom}(\lambda)$ of order $\Oh(|\lambda|^{-\ell})$. Here we will show that $s_{\Dom}(\lambda)$ is in fact a symbol that admits an expansion into components that exhibit in general the structure shown in Theorem~\ref{ResolventTraceExpansion2}. More precisely, let 
\begin{equation}\label{PhasesMainThm}
\M = \set{\Re \sigma/m : \sigma \in \spec_b(A), \; -m/2 < \Im \sigma < m/2},
\end{equation}
where $\spec_b(A)$ denotes the boundary spectrum of $A$ (see \cite{RBM2}), and let
\begin{multline}\label{SemigroupDef}
\E = \text{additive semigroup generated by}\\
\{\Im(\sigma - \sigma') : \sigma,\,\sigma' \in \spec_b(A),\,
-m/2 < \Im\sigma\leq\Im\sigma' < m/2\}\cup(-\N_0), \;\; 
\end{multline}
a discrete subset of $\overline \R_-$ without points of accumulation. Then 
\begin{equation}\label{sDExpansion}
s_{\Dom}(\lambda)\sim \sum_{\substack{\nu\in \E\\\nu\leq -\ell m}}
r_\nu(\lambda^{i\mu_{1}},\ldots,\lambda^{i\mu_{N}},\log\lambda) \lambda^{\nu/m} 
\; \text{ as } |\lambda| \to \infty,
\end{equation}
where the $\mu_i$ are the elements of $\M$ and the $r_\nu$ are rational functions of their arguments as described in the theorem.

An analysis of the arguments of Sections~\ref{sec:LimitingOrbits} and \ref{sec:AsymptoticsOfProjection} shows that the structure of the functions $r_\nu$ depends strongly on the relation of the domain with the part of the boundary spectrum in the `critical strip' $\set{\sigma \in \C : -m/2 < \Im\sigma < m/2}$. This includes what elements of the set $\M$ actually appear in the $r_\nu$, and whether they are truly rational functions and not just polynomials. We will not follow up on this observation in detail, but only single out here the following two cases because of their special role in the existing literature. When $\Dom$ is stationary, the machinery of Sections~\ref{sec:LimitingOrbits} and \ref{sec:AsymptoticsOfProjection} is not needed, and we recover the results of \cite{GKM5a}: the $r_\nu$ are just polynomials in $\log\lambda$, and the numbers $\nu$ in \eqref{sDExpansion} are all integers. If $\Dom$ is nonstationary, but the elements of $\spec_b(A)$ in the critical strip are vertically aligned, then again there is no dependence on the elements of $\M$, but the coefficients are generically rational functions of $\log\lambda$. Note that all second order regular singular operators in the sense of Br{\"u}ning and Seeley (see \cite{BrSe87,BrSe91,KLP08b}) have this special property.

By standard arguments, Theorem~\ref{ResolventTraceExpansion2} implies corresponding results about the expansion of the heat trace $\Tr\bigl(\varphi e^{-tA_{\Dom}}\bigr)$ as $t \to 0^+$ if $A_{\Dom}$ is sectorial, and about the structure of the $\zeta$-function if $A_{\Dom}$ is positive. It has been observed by other authors that the resolvent trace, the heat kernel, and the $\zeta$-function for certain model operators may exhibit so called \emph{unusual} or \emph{exotic} behavior \cite{FMP04,FMPSeeley,FPW,KLP06,KLP08a,KLP08b,LMP07}. This is accounted for in Theorem~\ref{ResolventTraceExpansion2} by the fact that the components may have non-integer orders $\nu_j$ belonging to the set $\E$, and that the $r_j$ may be genuine rational functions and not mere polynomials. For example, the former implies that the $\zeta$-function of a positive operator might have poles at unusual locations, and the latter that it might not extend meromorphically to $\C$ at all. Both phenomena have been observed for $\zeta$-functions of model operators.

Earlier investigations on this subject typically relied on separation of variables and special function techniques to carry out the analysis near the boundary. This is one major reason why all previously known results are limited to narrow classes of operators. Here and in \cite{GKM5a} we develop a new approach which leads to the completely general result Theorem~\ref{ResolventTraceExpansion2}. This result is new even for Laplacians with respect to warped cone metrics, or, more generally, for $c$-Laplacians (see \cite{GKM5a}).

Throughout this paper we assume that the ray conditions \eqref{RayConditions} hold. We will rely heavily on \cite{GKM5a}, where we analyzed $(A_\Dom-\lambda)^{-\ell}$ using the representation
\begin{equation}\label{ResolventStructure}
 (A_\Dom-\lambda)^{-1}=B(\lambda)+\big[1-B(\lambda)(A-\lambda)\big] F_\Dom(\lambda)^{-1} T(\lambda)
\end{equation}
obtained in \cite{GKM2} with the aid of the formula
\begin{equation*}
(A_\Dom-\lambda)^{-\ell}= \frac{1}{(\ell-1)!}\partial_{\lambda}^{\ell-1}(A_\Dom-\lambda)^{-1}.
\end{equation*}
In \cite{GKM5a} we described in full generality the asymptotic behavior of the operator families $B(\lambda)$, $[1-B(\lambda)(A-\lambda)]$, and $T(\lambda)$, and gave an asymptotic expansion of $F_\Dom(\lambda)^{-1}$ if $\Dom$ is stationary. Therefore, to complete the picture we only need to show that $F_\Dom(\lambda)^{-1}$ has a full asymptotic expansion and describe its qualitative features for a general domain $\Dom$.

\medskip
We end this introduction with an overview of the paper. There is a formula similar to \eqref{ResolventStructure} concerning the extension of \eqref{OrgOpWedge} with domain $\Dom_\wedge$. The analysis of $F_\Dom(\lambda)^{-1}$ in \cite{GKM5a} was facilitated by the fact that the corresponding operator $F_{\wedge,\Dom_{\wedge}}(\lambda)^{-1}$ for $A_{\wedge,\Dom_{\wedge}}$ has a simple homogeneity property when $\Dom$ is stationary. In Section~\ref{sec:ModelResolvent} we will establish an explicit connection between the operator $F_{\wedge,\Dom_{\wedge}}(\lambda)^{-1}$ and a family of projections for a general domain $\Dom_{\wedge}$. This family of projections, previously studied in the context of rays of minimal growth in \cite{GKM1,GKM3}, is analyzed further in Sections~\ref{sec:LimitingOrbits} and \ref{sec:AsymptoticsOfProjection}, and is shown to fully determine the asymptotic structure of $F_{\wedge,\Dom_{\wedge}}(\lambda)^{-1}$, summarized in Proposition~\ref{FwedgeInvSymbol}. As a consequence, we obtain in Proposition~\ref{ResolventAwedge} a description of the asymptotic structure of $(A_{\wedge,\Dom_{\wedge}}-\lambda)^{-1}$.

The family of projections is closely related to the curve through $\Dom_{\wedge}/\Dom_{\wedge,\min}$ determined by the flow defined by $\bkappa$ on $\Gr_{d''}(\Dom_{\wedge,\max}/\Dom_{\wedge,\min})$. The behavior of an abstract version of $\bkappa^{-1}_{\zeta}(\Dom_{\wedge}/\Dom_{\wedge,\min})$ is analyzed in extenso in Section~\ref{sec:LimitingOrbits}. Let $\Sing$ denote a finite dimensional complex vector space and $\a:\Sing\to\Sing$ an arbitrary linear map. The main technical result of Section~\ref{sec:LimitingOrbits} is an algorithm (Lemmas~\ref{ExistenceOfShadowLemma} and \ref{BasicLemma}) which is used to obtain a section of the variety of frames of elements of $\Gr_{d''}(\Sing)$ along $e^{t\a}D$ for all sufficiently large $t$ (really, all complex $t$ with $|\Im t|\leq\theta$ and $\Re t$ large). The dependence of the section on $t$ is explicit enough to allow the determination of the nature of the $\Omega$-limit sets of the flow $t\mapsto e^{t\a}$ on $\Gr_{d''}(\Sing)$ (Proposition~\ref{ExistenceOfShadow}).

The results of Section~\ref{sec:LimitingOrbits} are used in Section~\ref{sec:AsymptoticsOfProjection} to obtain the asymptotic behavior of the aforementioned family of projections, and consequently of $F_{\wedge,\Dom_\wedge}(\lambda)^{-1}$ when $\lambda\in \Lambda$ as $|\lambda|\to \infty$, assuming only the ray condition \eqref{RayConditions} for $A_\wedge$ on $\Dom_{\wedge}$ (in the equivalent form given by (iii) of Theorem~\ref{EquivRayConditionsWedge}).

The work comes together in Section~\ref{sec:AymtpoticsResolvent}. There we obtain first the full asymptotics of $F_{\Dom}(\lambda)^{-1}$ using results from \cite{GKM2,GKM5a} and the asymptotics of $F_{\wedge,\Dom_\wedge}(\lambda)^{-1}$ obtained earlier. This is then combined with work done in \cite{GKM5a} on the asymptotics of the rest of the operators in \eqref{ResolventStructure}, giving Theorem~\ref{MainThmTraceExpansion} on the asymptotics of the trace $\Tr\bigl(\varphi(A_{\Dom}-\lambda)^{-\ell}\bigr)$. The manipulation of symbols and their asymptotics is carried out within the framework of refined classes of symbols discussed in the appendix.

\section{Resolvent of the model operator}
\label{sec:ModelResolvent}

In \cite{GKM1,GKM2,GKM3} we studied the existence of sectors of minimal growth and the structure of resolvents for the closed extensions of an elliptic cone operator $A$ and its wedge symbol $A_\wedge$. In particular, in \cite{GKM2} we determined that $\Lambda$ is a sector of minimal growth for $A_\Dom$ if $\csym(A)-\lambda$ is invertible for $\lambda$ in $\Lambda$, and if $\Lambda$ is also a sector of minimal growth for $A_\wedge$ with the associated domain $\Dom_\wedge$. In this section we will briefly review and refine some of the results concerning the resolvent of $A_{\wedge,\Dom_\wedge}$ in the closed sector $\Lambda$. 

The set
\begin{equation*}
\bgres(A_\wedge) =\set{\lambda\in \C:A_\wedge-\lambda\text{ is injective on } \Dom_{\wedge,\min} \text{ and surjective on } \Dom_{\wedge,\max} },
\end{equation*}
introduced in \cite{GKM1}, is of interest for a number of reasons, including the property that if $\lambda\in \bgres(A_\wedge)$ then every closed extension of $A_\wedge-\lambda$ is Fredholm.
Using the property
\begin{equation}\label{HomogeneityOfAwedge}
\kappa_\varrho A_\wedge = \varrho^{-m} A_\wedge\kappa_\varrho
\end{equation}
one verifies that $\bgres(A_\wedge)$ is a disjoint union of open sectors in $\C$. Defining $d''=-\Ind(A_{\wedge,\min}-\lambda)$ and $d'=\Ind(A_{\wedge\max}-\lambda)$ for $\lambda$ in one of these sectors, one has that if $(A_{\wedge,\Dom_\wedge}-\lambda)$ is invertible, then $\dim(\Dom_\wedge/\Dom_{\wedge,\min})=d''$ and $\dim \ker(A_{\wedge,\max}-\lambda)=d'$. The dimension of $\Dom_{\wedge,\max}/\Dom_{\wedge,\min}$ is $d'+d''$.

From now on we assume that $\Lambda\ne \C$ is a fixed closed sector such that $\Lambda\minus 0\subset\bgres(A_\wedge)$ and $\resolv A_{\wedge,\Dom_{\wedge}}\cap \Lambda\ne \emptyset$. Without loss of generality we also assume that $\Lambda$ has nonempty interior. The set $\resolv A_{\wedge,\Dom_{\wedge}}\cap \Lambda$ is discrete, in particular connected.

Corresponding to \eqref{ResolventStructure} there is a representation
\begin{equation}\label{ResolventWedgeStructure}
 (A_{\wedge,\Dom_\wedge}-\lambda)^{-1}=B_\wedge (\lambda)+\big[1-B_\wedge (\lambda)(A_\wedge-\lambda)\big] F_{\wedge,\Dom_\wedge}(\lambda)^{-1} T_\wedge (\lambda)
\end{equation}
for $\lambda\in\Lambda\cap \resolv(A_{\wedge,\Dom_\wedge})$. As we shall see in Section~\ref{sec:AymtpoticsResolvent}, if $\Lambda$ is a sector of minimal growth for $A_{\wedge,\Dom_\wedge}$, then the asymptotic structure of $F_{\wedge,\Dom_\wedge}(\lambda)^{-1}$ determines much of the asymptotic structure of the operator $F_\Dom(\lambda)^{-1}$ in \eqref{ResolventStructure}. 

If $\Dom_{\wedge}$ is $\kappa$-invariant, then $F_{\wedge,\Dom_\wedge}(\lambda)^{-1}$ has the homogeneity property
\begin{equation}\label{FwedgeInvHomogeneity}
\bkappa_{|\lambda|^{1/m}}^{-1} F_{\wedge,\Dom_\wedge}(\lambda)^{-1} = F_{\wedge,\Dom_\wedge}(\hat\lambda)^{-1}
\end{equation}
and is, in that sense, the principal homogeneous component of $F_\Dom(\lambda)^{-1}$. This facilitates the expansion of $F_\Dom(\lambda)^{-1}$ as shown in \cite[Proposition~5.17]{GKM5a}. However, if $\Dom_\wedge$ is not $\kappa$-invariant, $F_{\wedge,\Dom_\wedge}(\lambda)^{-1}$ fails to be homogeneous and its asymptotic behavior is more intricate.

\medskip
The identity \eqref{ResolventWedgeStructure} obtained in \cite{GKM2} begins with a choice of a family of operators $K_\wedge(\lambda):\C^{d''}\to x^{-m/2}L^2_b(Y^\wedge;E)$ which is $\kappa$-homogeneous of degree $m$ and such that 
\begin{equation*}
\begin{pmatrix}
A_\wedge - \lambda & K_\wedge(\lambda)
\end{pmatrix} :\begin{array}{c}\Dom_{\wedge,\min}\\\oplus\\\C^{d''}\end{array}\to x^{-m/2}L^2_b(Y^\wedge;E)
\end{equation*}
is invertible for all $\lambda\in \Lambda\minus 0$. The homogeneity condition on $K_\wedge$ means that
\begin{equation}\label{homogeneityK}
K_\wedge(\varrho^m\lambda)=\varrho^m\kappa_\varrho K_\wedge(\lambda)
\;\text{ for } \varrho>0.
\end{equation}
Defining the action of $\R_+$ on $\C^{d''}$ to be the trivial action, this condition on the family $K_\wedge(\lambda)$ becomes the same homogeneity property that the family $A_\wedge-\lambda$ has because of \eqref{HomogeneityOfAwedge}. Other than this the choice of $K_\wedge$ is largely at our disposal. That such a family $K_\wedge(\lambda)$ exists is guaranteed by the condition that $\Lambda\minus 0\subset \bgres(A_\wedge)$.

Let $\lambda_0\in\open\Lambda$ be such that $A_{\wedge,\Dom_\wedge}-\lambda$ is invertible for every $\lambda=e^{i\vartheta}\lambda_0\in\Lambda$. We fix $\lambda_0$ (for convenience on the central axis of the sector) and a cut-off function $\omega\in C_c^\infty([0,1))$, and define
\begin{equation}\label{Kwedge}
 K_\wedge(\lambda)= (A_\wedge-\lambda)\omega(x|\lambda|^{1/m}) \bkappa_{|\lambda/\lambda_0|^{1/m}} \;\text{ for } \lambda\in\Lambda\minus 0
\end{equation}
acting on $\Dom_\wedge/\Dom_{\wedge,\min}\cong\C^{d''}$. The factor $\omega(x|\lambda|^{1/m}) \bkappa_{|\lambda/\lambda_0|^{1/m}}$ in \eqref{Kwedge} is to be understood as the composition 
\begin{equation*}
\Dom_\wedge/\Dom_{\wedge,\min}\xrightarrow{\bkappa_{|\lambda/\lambda_0|^{1/m}}}\Dom_{\wedge,\max}/\Dom_{\wedge,\min}\cong \Sing_{\wedge,\max}\subset \Dom_{\wedge,\max}\xrightarrow{\omega(x|\lambda|^{1/m})} \Dom_{\wedge,\max}
\end{equation*}
in which the last operator is multiplication by the function $\omega(x|\lambda|^{1/m})$ and we use the canonical identification of $\Dom_{\wedge,\max}/\Dom_{\wedge,\min}$ with the orthogonal complement $\Sing_{\wedge,\max}$ of $\Dom_{\wedge,\min}$ in $\Dom_{\wedge,\max}$ using the graph inner product,
\begin{equation*}
(u,v)_{A_\wedge} = (A_\wedge u,A_\wedge v)+(u,v),\quad u,v\in \Dom_{\wedge,\max}.
\end{equation*}
By definition, $K_\wedge(\lambda)$ satisfies \eqref{homogeneityK} and the family
\begin{equation*}
\begin{pmatrix}
A_\wedge - \lambda & K_\wedge(\lambda)
\end{pmatrix} :\begin{array}{c}\Dom_{\wedge,\min}\\ \oplus\\ \Dom_\wedge/\Dom_{\wedge,\min} \end{array}\to x^{-m/2}L^2_b(Y^\wedge;E)
\end{equation*}
is invertible for every $\lambda$ on the arc $\set{\lambda\in\Lambda:|\lambda|=|\lambda_0|}$ through $\lambda_0$. Therefore, using $\kappa$-homogeneity, it is invertible for every $\lambda\in\Lambda\minus 0$. If 
\begin{equation*}
\binom{B_\wedge(\lambda)}{T_\wedge(\lambda)}:x^{-m/2}L_b^2(Y^\wedge;E)\to \begin{array}{c}\Dom_{\wedge,\min}\\ \oplus \\ \Dom_\wedge/\Dom_{\wedge,\min}\end{array}
\end{equation*}
is the inverse of $\begin{pmatrix} A_{\wedge,\min}-\lambda & K_\wedge(\lambda) \end{pmatrix}$, then $T_\wedge(\lambda)(A_\wedge-\lambda)=0$ on $\Dom_{\wedge,\min}$, so it induces a map
\begin{equation*}
 F_\wedge(\lambda)=[T_\wedge(\lambda)(A_\wedge-\lambda)]: \Dom_{\wedge,\max}/\Dom_{\wedge,\min}\to \Dom_\wedge/\Dom_{\wedge,\min}
\end{equation*}
whose restriction $F_{\wedge,\Dom_\wedge}(\lambda)= F_\wedge(\lambda)|_{\Dom_\wedge/\Dom_{\wedge,\min}}$ is invertible for $\lambda \in \resolv(A_{\wedge,\Dom_{\wedge}})\cap \Lambda\minus 0$ and leads to \eqref{ResolventWedgeStructure}. Moreover, since $T_\wedge(\lambda)K_\wedge(\lambda)=1$, we have
\begin{align*}
F_{\wedge,\Dom_\wedge}(\lambda)^{-1} &= 
q_\wedge (A_{\wedge,\Dom_\wedge}-\lambda)^{-1} K_\wedge(\lambda) \\
&= q_\wedge (A_{\wedge,\Dom_\wedge}-\lambda)^{-1} (A_\wedge-\lambda) \omega(x|\lambda|^{1/m}) \bkappa_{|\lambda/\lambda_0|^{1/m}},
\end{align*}
where $q_\wedge:\Dom_{\wedge,\max}\to \Dom_{\wedge,\max}/\Dom_{\wedge,\min}$ is the quotient map.

For $\lambda\in\bgres(A_\wedge)$ let $\K_{\wedge,\lambda} = \ker(A_{\wedge,\max}-\lambda)$. Then (\cite[Lemma 5.7]{GKM1})
\begin{equation}\label{ResCapBgRes}
\lambda\in \resolv(A_{\wedge,\Dom_\wedge})\; \text{ if and only if }\;\Dom_{\wedge,\max} = \Dom_\wedge \oplus \K_{\wedge,\lambda}
\end{equation}
in which case we let $\pi_{\Dom_\wedge,\K_{\wedge,\lambda}}$ be the projection on $\Dom_\wedge$ according to this decomposition. If $B_{\wedge,\max}(\lambda)$ is the right inverse of $A_{\wedge,\max}-\lambda$ with range $\K_{\wedge,\lambda}^\perp$, then
\begin{equation*}
 (A_{\wedge,\Dom_\wedge}-\lambda)^{-1} = \pi_{\Dom_\wedge,\K_{\wedge,\lambda}}B_{\wedge,\max}(\lambda)
\end{equation*}
and $B_{\wedge,\max}(\lambda)(A_{\wedge,\max}-\lambda)$ is the orthogonal projection onto $\K_{\wedge,\lambda}^\perp$. Thus
\begin{equation*}
 \pi_{\Dom_\wedge,\K_{\wedge,\lambda}}B_{\wedge,\max}(\lambda)(A_{\wedge,\max}-\lambda)= \pi_{\Dom_\wedge,\K_{\wedge,\lambda}}
\end{equation*}
and therefore,
\begin{equation*}
 F_{\wedge,\Dom_\wedge}(\lambda)^{-1} = q_\wedge \,\pi_{\Dom_\wedge,\K_{\wedge,\lambda}}\, \omega(x|\lambda|^{1/m}) \bkappa_{|\lambda/\lambda_0|^{1/m}}.
\end{equation*}
Let 
\begin{equation}\label{DandKlambda}
 D = \Dom_{\wedge}/\Dom_{\wedge,\min}, \quad
 K_{\wedge,\lambda} = \big(\K_{\wedge,\lambda}+\Dom_{\wedge,\min}\big)/\Dom_{\wedge,\min}.
\end{equation}
Again by Lemma 5.7 of \cite{GKM1}, either of the conditions in \eqref{ResCapBgRes} is equivalent to $D\cap K_{\wedge,\lambda}=0$, hence to
\begin{equation}\label{QuotientDecomposition}
 \Dom_{\wedge,\max}/\Dom_{\wedge,\min} = D \oplus K_{\wedge,\lambda}
\end{equation}
by dimensional considerations, since $\dim K_{\wedge,\lambda} = \dim \K_{\wedge,\lambda}=d'$. Let then $\pi_{D,K_{\wedge,\lambda}}$ be the projection on $D$ according to the decomposition \eqref{QuotientDecomposition}. Then $q_\wedge\,\pi_{\Dom_\wedge,\K_{\wedge,\lambda}}=\pi_{D,K_{\wedge,\lambda}}\, q_\wedge$ and
\begin{align} \notag
F_{\wedge,\Dom_\wedge}(\lambda)^{-1}&= \pi_{D,K_{\wedge,\lambda}}q_\wedge \omega(x|\lambda|^{1/m}) \bkappa_{|\lambda/\lambda_0|^{1/m}} \\ \label{FwedgeInvProjection}
&= \pi_{D,K_{\wedge,\lambda}}\bkappa_{|\lambda/\lambda_0|^{1/m}}
\end{align}
since multiplication by $(1-\omega(x|\lambda|^{1/m}))$ maps $\Dom_{\wedge,\max}$ into $\Dom_{\wedge,\min}$ for every $\lambda$.

We will now express $F_{\wedge,D_\wedge}(\lambda)^{-1}$ in terms of projections with $K_{\wedge,\lambda_0}$ in place of $K_{\wedge,\lambda}$. This will of course require replacing $D$ by a family depending on $\lambda$.

Fix $\lambda \in \open \Lambda$, let $S_{\lambda,m}$ be the connected component of $\set{\zeta:\zeta^m\lambda\in \open \Lambda}$ containing $\R_+$. Since $\Lambda\ne \C$, $S_{\lambda,m}$ omits a ray, and so the map $\R_+\ni \varrho \mapsto \bkappa_\varrho \in \Aut(\Dom_{\wedge,\max}/\Dom_{\wedge,\min})$ extends holomorphically to a map
\begin{equation*}
S_{\lambda,m} \ni\zeta \mapsto \bkappa_\zeta\in \Aut(\Dom_{\wedge,\max}/\Dom_{\wedge,\min}).
\end{equation*}
It is an elementary fact that 
\begin{equation*}
\bkappa_{\zeta}^{-1}\big(\pi_{D,K_{\wedge,\lambda}}\big)\bkappa_{\zeta} = \pi_{\bkappa_{\zeta}^{-1} D,\bkappa_{\zeta}^{-1} K_{\wedge,\lambda}}.
\end{equation*}
A simple consequence of \eqref{HomogeneityOfAwedge} is that $\kappa_{\zeta}^{-1} \K_{\wedge,\lambda} = \K_{\wedge,\lambda/\zeta^m}$ if $\zeta\in \R_+$, hence also $\bkappa_{\zeta}^{-1} K_{\wedge,\lambda} = K_{\wedge,\lambda/\zeta^m}$ for such $\zeta$ since the maps $q_\wedge|_{\K_{\wedge,\lambda}}:\K_{\wedge,\lambda}\to K_{\wedge,\lambda}$ are isomorphisms. Therefore
\begin{equation}\label{PiScaling}
\bkappa_{\zeta}^{-1}\big(\pi_{D,K_{\wedge,\lambda}}\big)\bkappa_{\zeta} =  \pi_{\bkappa_{\zeta}^{-1} D,K_{\wedge,\lambda/\zeta^m}}
\end{equation}
if $\zeta\in \R_+$. This formula holds also for arbitrary $\zeta\in S_{\lambda,m}$. To see this we make use of the family of isomorphisms $\mathfrak P(\lambda'):\K_{\wedge,\lambda_0}\to \K_{\wedge,\lambda'}$ (defined for $\lambda'$ in the connected component of $\bgres(A_\wedge)$ containing $\lambda_0$) constructed in Section 7 of \cite{GKM1}. Its two basic properties are that $\lambda'\mapsto \mathfrak P(\lambda')\phi$ is holomorphic for each $\phi\in \K_{\wedge,\lambda_0}$ and that $\kappa_\varrho \mathfrak P(\lambda')=\mathfrak P(\varrho^m \lambda')$ if $\varrho\in \R_+$. These statements are, respectively, Proposition 7.9 and Lemma 7.11 of \cite{GKM1}. Let
\begin{equation*}
f:\Dom_{\wedge,\max} \to \C
\end{equation*}
be an arbitrary continuous linear map that vanishes on $\K_{\wedge,\lambda}$. For any $\phi\in \K_{\wedge,\lambda_0}$ the function
\begin{equation*}
S_{\lambda,m}\ni \zeta\mapsto \langle f, \kappa_\zeta \mathfrak P(\lambda/\zeta^m)\phi\rangle \in \C
\end{equation*}
is holomorphic and vanishes on $\R_+$, the latter because $\kappa_\zeta \mathfrak P(\lambda/\zeta^m)=\mathfrak P(\lambda)$ for such $\zeta$. Therefore $\langle f, \kappa_\zeta \mathfrak P(\lambda/\zeta^m)\phi\rangle=0$ for all $\zeta\in S_{\lambda,m}$. Since $f$ is arbitrary, we must have $\kappa_\zeta \mathfrak P(\lambda/\zeta^m)\phi\in \K_{\wedge,\lambda}$. Therefore $\mathfrak P(\lambda/\zeta^m)\phi\in \kappa_\zeta^{-1} \K_{\wedge,\lambda}$. Since $\mathfrak P(\lambda/\zeta^m):\K_{\wedge,\lambda_0} \to \K_{\wedge,\lambda/\zeta^m}$ is an isomorphism, $\K_{\wedge,\lambda/\zeta^m}=\kappa_\zeta^{-1} \K_{\wedge,\lambda}$ when $\zeta\in S_{\lambda,m}$. This shows
\begin{equation*}
K_{\wedge,\lambda/\zeta^m}=\kappa_\zeta^{-1} K_{\wedge,\lambda}
\end{equation*}
and hence that \eqref{PiScaling} holds for $\zeta\in S_{\lambda,m}$.

The principal branch of the $m$-th root gives a bijection
\begin{equation}\label{TheRoot}
(\,\cdot \,)^{1/m}:\lambda_0^{-1} \open \Lambda\to S_{\lambda_0,m}.
\end{equation}
The reader may now verify that with this root, and with the notation $\hat \zeta=\zeta/|\zeta|$ whenever $\zeta\in \C\minus 0$, one has
\begin{equation}\label{FwedgeInvHolProjection}
\bkappa_{|\lambda|^{1/m}}^{-1}F_{\wedge,\Dom_\wedge}(\lambda)^{-1} = \bkappa_{|\lambda_0|^{1/m}}^{-1} \bkappa_{(\hat\lambda/\hat\lambda_0)^{1/m}}
\Big(\pi_{\bkappa_{(\lambda/\lambda_0)^{1/m}}^{-1} D,K_{\wedge,\lambda_0}} \Big) \bkappa_{(\hat\lambda/\hat\lambda_0)^{1/m}}^{-1}
\end{equation}
when $\lambda\in \open \Lambda\cap \resolv(A_{\wedge,\Dom_\wedge})$. The arguments leading to this formula remain valid if $\Lambda$ is replaced by a slightly bigger closed sector, so the formula just proved holds in $(\Lambda\minus 0)\cap \resolv(A_{\wedge,\Dom_\wedge})$.

The projection $\pi_{\bkappa_{(\lambda/\lambda_0)^{1/m}}^{-1}D, K_{\wedge,\lambda_0}}$ is thus a key component of the resolvent of $A_{\wedge,\Dom_\wedge}$ whose behavior for large $|\lambda|$ will be analyzed in Section \ref{sec:AsymptoticsOfProjection} under a certain fundamental condition which happens to be equivalent to the condition that $\Lambda$ is a sector of minimal growth for $A_{\wedge,\Dom_\wedge}$. We now proceed to discuss this condition.

\medskip
The condition that the sector $\Lambda$ with $\Lambda\minus 0 \subset \bgres(A_\wedge)$ is a sector of minimal growth for $A_{\wedge,\Dom_\wedge}$ was shown in \cite[Theorem~8.3]{GKM1} to be equivalent to the invertibility of $A_{\wedge,\Dom_\wedge}-\lambda$ for $\lambda$ in
\begin{equation*}
\Lambda_R=\set{\lambda\in\Lambda:|\lambda|\ge R}
\end{equation*}
together with the uniform boundedness of $\pi_{\kappa_{|\lambda|^{1/m}}^{-1} D,K_{\hat\lambda}}$ in $\Lambda_R$. Further, it was shown in \cite{GKM3} that along a ray containing $\lambda_0$, this condition is in turn equivalent to requiring that the curve
\begin{equation*}
 \varrho\mapsto \bkappa_\varrho^{-1}D: [R,\infty)\to \Gr_{d''}(\Dom_{\wedge,\max}/\Dom_{\wedge,\min})
\end{equation*}
does not approach the set
\begin{equation}\label{Variety}
 \mathscr V_{K_{\wedge,\lambda_0}}=\{D\in \Gr_{d''}(\Dom_{\wedge,\max}/\Dom_{\wedge,\min}): 
 D\cap K_{\wedge,\lambda_0}\ne 0\}
\end{equation}
as $\varrho\to\infty$, a condition conveniently phrased in terms of the limiting set
\begin{multline*}
\Omega^{-}(D)=\big\{D'\in \Gr_{d''}(\Dom_{\wedge,\max}/\Dom_{\wedge,\min}) : \exists\, \varrho_\nu\to\infty \text{ in } \R_+ \\ 
  \text{such that } \bkappa_{\varrho_\nu}^{-1}D\to D' \text{ as } \nu\to\infty\big\}:
\end{multline*}
A ray $\set{r\lambda_0\in\C: r> 0}$ contained in $\bgres(A_\wedge)$ is a ray of minimal growth for $A_{\wedge,\Dom_\wedge}$ if and only if
\begin{equation*}
 \Omega^{-}(D)\cap \mathscr V_{K_{\wedge,\lambda_0}}= \emptyset.
\end{equation*}
Define
\begin{multline}\label{OmegaMinus}
\Omega^{-}_{\Lambda}(D)=\big\{D'\in \Gr_{d''}(\Dom_{\wedge,\max}/\Dom_{\wedge,\min})
: \exists\, \{\zeta_\nu\}_{\nu=1}^{\infty}\subset\C \text{ with } \lambda_0\zeta_\nu \in \Lambda \text{ and} \\
 \;\; |\zeta_\nu| \to\infty \st \bkappa_{\zeta_{\nu}^{1/m}}^{-1}D\to D' \text{ as } \nu\to\infty\big\}.
\end{multline}
in which we are using the holomorphic extension of $\varrho\mapsto \bkappa_\varrho$ to $S_{\lambda_0,m}$ and the $m$-th root is the principal branch, as specified in \eqref{TheRoot}. We can now consolidate all these conditions as follows.

\begin{theorem}\label{EquivRayConditionsWedge}
Let $\Lambda$ be a closed sector such that $\Lambda\minus 0\subset \bgres(A_\wedge)$, let $\lambda_0\in \open\Lambda$. The following statements are equivalent:
\begin{enumerate}[$(i)$]
\item $\Lambda$ is a sector of minimal growth for $A_{\wedge,\Dom_\wedge}$;
\item there are constants $C,\, R>0$ such that $\Lambda_R\subset \resolv(A_{\wedge,\Dom_\wedge})$ and
\begin{equation*}
\Big\|\pi_{\bkappa_{\zeta^{1/m}}^{-1}D,K_{\wedge,\lambda_0}}\Big\|_{\L(\Dom_{\wedge,\max}/ \Dom_{\wedge,\min})}\leq C
\end{equation*} 
for every $\zeta$ such that $\lambda_0\zeta \in\Lambda_R$;
\item $\Omega^{-}_{\Lambda}(D) \cap \mathscr V_{K_{\wedge,\lambda_0}} = \emptyset$.
\end{enumerate}
\end{theorem}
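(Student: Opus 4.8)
The plan is to prove the equivalences by establishing $(i) \Leftrightarrow (ii)$ via the already-cited results from \cite{GKM1}, and then $(ii) \Leftrightarrow (iii)$ by a compactness argument on the Grassmannian. For $(i)\Leftrightarrow(ii)$, recall from \cite[Theorem~8.3]{GKM1} that $\Lambda$ being a sector of minimal growth for $A_{\wedge,\Dom_\wedge}$ is equivalent to invertibility of $A_{\wedge,\Dom_\wedge}-\lambda$ on $\Lambda_R$ together with uniform boundedness of $\pi_{\kappa_{|\lambda|^{1/m}}^{-1}D,K_{\hat\lambda}}$ on $\Lambda_R$. The point is to rewrite this in terms of the \emph{fixed} kernel space $K_{\wedge,\lambda_0}$ using the holomorphic scaling identity \eqref{PiScaling}, which gives, for $\zeta\in S_{\lambda_0,m}$,
\begin{equation*}
\bkappa_{\zeta}^{-1}\big(\pi_{D,K_{\wedge,\lambda_0}}\big)\bkappa_{\zeta} = \pi_{\bkappa_{\zeta}^{-1}D,K_{\wedge,\lambda_0/\zeta^m}}.
\end{equation*}
Writing $\lambda=\lambda_0\zeta^m$ and comparing with the conjugation relation $\bkappa_\zeta^{-1}(\pi_{D,K_{\wedge,\lambda}})\bkappa_\zeta = \pi_{\bkappa_\zeta^{-1}D,\bkappa_\zeta^{-1}K_{\wedge,\lambda}}$, one sees that the operator norm of $\pi_{\kappa_{|\lambda|^{1/m}}^{-1}D,K_{\hat\lambda}}$ differs from that of $\pi_{\bkappa_{\zeta^{1/m}}^{-1}D,K_{\wedge,\lambda_0}}$ only by conjugation by the unitary-orbit factors $\bkappa_{(\hat\lambda/\hat\lambda_0)^{1/m}}^{\pm 1}$, which range over a compact set of automorphisms as $\lambda$ ranges over $\Lambda$ (since $\Lambda$ is a fixed sector and $\hat\lambda/\hat\lambda_0$ stays in a compact arc). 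Hence the two boundedness conditions are equivalent, and moreover invertibility of $A_{\wedge,\Dom_\wedge}-\lambda$ on $\Lambda_R$ is, by \eqref{ResCapBgRes} and its quotient form \eqref{QuotientDecomposition}, precisely the statement that $\bkappa_{\zeta^{1/m}}^{-1}D \cap K_{\wedge,\lambda_0} = 0$, i.e.\ that the projection $\pi_{\bkappa_{\zeta^{1/m}}^{-1}D,K_{\wedge,\lambda_0}}$ is well-defined for $\lambda_0\zeta\in\Lambda_R$. This gives $(i)\Leftrightarrow(ii)$.

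For $(ii)\Leftrightarrow(iii)$, I would argue through the Grassmannian. The map $D' \mapsto \pi_{D',K_{\wedge,\lambda_0}}$ is defined and continuous exactly on $\Gr_{d''}(\Dom_{\wedge,\max}/\Dom_{\wedge,\min})\minus \mathscr V_{K_{\wedge,\lambda_0}}$, and its operator norm tends to $+\infty$ as $D'$ approaches $\mathscr V_{K_{\wedge,\lambda_0}}$ (this is a standard fact about the ``blow-up'' of oblique projections near the incidence variety). Therefore the curve $\zeta\mapsto \bkappa_{\zeta^{1/m}}^{-1}D$ keeps $\|\pi_{\bkappa_{\zeta^{1/m}}^{-1}D,K_{\wedge,\lambda_0}}\|$ bounded on $\Lambda_R$ if and only if the set of limit points of this curve as $|\zeta|\to\infty$, which is exactly $\Omega^{-}_\Lambda(D)$ by definition \eqref{OmegaMinus}, stays away from $\mathscr V_{K_{\wedge,\lambda_0}}$. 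The forward direction uses that $\Gr_{d''}$ is compact: if $\Omega^{-}_\Lambda(D)$ met $\mathscr V_{K_{\wedge,\lambda_0}}$, we could extract a sequence $\zeta_\nu$ with $\bkappa_{\zeta_\nu^{1/m}}^{-1}D$ converging into $\mathscr V_{K_{\wedge,\lambda_0}}$, forcing the projection norms to blow up, contradicting $(ii)$. The reverse direction uses the same compactness: if the norms were unbounded, by compactness of $\Gr_{d''}$ a subsequence of $\bkappa_{\zeta_\nu^{1/m}}^{-1}D$ would converge to some $D'\in\Omega^{-}_\Lambda(D)$, and the blow-up of the norm near the incidence variety forces $D'\in\mathscr V_{K_{\wedge,\lambda_0}}$, contradicting $(iii)$.

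The main obstacle is the careful bookkeeping in $(i)\Leftrightarrow(ii)$: one has to track precisely how the various normalizations in \cite[Theorem~8.3]{GKM1} (which is stated with $K_{\hat\lambda}$ and the real scaling $\kappa_{|\lambda|^{1/m}}$) translate into the present formulation with the fixed space $K_{\wedge,\lambda_0}$ and the holomorphic scaling $\bkappa_{\zeta^{1/m}}$ along the whole sector, and to verify that the discrepancy is uniformly bounded — which it is, because the extra factors $\bkappa_{(\hat\lambda/\hat\lambda_0)^{1/m}}$ stay in a fixed compact subgroup-orbit as $\lambda$ ranges over the compact cross-section of $\Lambda$, using the holomorphic extension of $\varrho\mapsto\bkappa_\varrho$ to $S_{\lambda_0,m}$ established above. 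I would also note that since $\resolv(A_{\wedge,\Dom_\wedge})\cap\Lambda$ is discrete (hence $\Lambda_R\minus\resolv(A_{\wedge,\Dom_\wedge})$ is finite for each $R$ by $\kappa$-homogeneity), the distinction between ``invertible on all of $\Lambda_R$'' and ``invertible for large $|\lambda|$ outside a discrete set'' is harmless and can be absorbed by enlarging $R$; this is what makes the equivalence with the purely geometric condition $(iii)$ clean.
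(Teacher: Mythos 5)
Your proposal is correct and takes essentially the same route as the paper: $(i)\Leftrightarrow(ii)$ via the scaling identity \eqref{PiScaling} and uniform boundedness of the $\bkappa_{\hat\zeta^{1/m}}$ conjugating factors combined with \cite[Theorem~8.3]{GKM1}, and $(ii)\Leftrightarrow(iii)$ via compactness of the Grassmannian together with the divergence of oblique projection norms near the incidence variety $\mathscr V_{K_{\wedge,\lambda_0}}$. Where you invoke the latter as a ``standard fact,'' the paper makes it explicit in one direction by the construction $u_\nu=\pi_{D_\nu}u$ and in the other by the function $\delta$ and \cite[Lemma~5.12]{GKM1}, but the substance is identical.
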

\begin{proof}
By means of \eqref{PiScaling} we get the identity
\begin{equation*}
\pi_{\bkappa_{\zeta^{1/m}}^{-1}D,K_{\wedge,\lambda_0}} =
\bkappa_{\hat\zeta^{1/m}}^{-1} \bkappa_{|\lambda_0|^{1/m}} 
\Big(\pi_{\bkappa_{|\lambda|^{1/m}}^{-1} D,K_{\wedge,\hat\lambda}} \Big)
\bkappa_{|\lambda_0|^{1/m}}^{-1} \bkappa_{\hat\zeta^{1/m}},
\end{equation*}
which is valid for large $\lambda\in \Lambda$, $\zeta=\lambda/\lambda_0$, and $\hat\zeta=\zeta/|\zeta|$. Since $\bkappa_{\hat\zeta^{1/m}}$ and $\bkappa_{\hat\zeta^{1/m}}^{-1}$ are uniformly bounded, \cite[Theorem~8.3]{GKM1} gives that $(i)$ and $(ii)$ are equivalent.

We now prove that $(ii)$ and $(iii)$ are equivalent. Let $\Sing_{\wedge,\max}= \Dom_{\wedge,\max}/\Dom_{\wedge,\min}$ and assume $(iii)$ is satisfied. Since $\Omega^{-}_{\Lambda}(D)$ and $\mathscr V_{K_{\wedge,\lambda_0}}$ are closed sets in $\Gr_{d''}(\Sing_{\wedge,\max})$, there is a neighborhood $\mathcal U$ of $\mathscr V_{K_{\wedge,\lambda_0}}$ and a constant $R>0$ such that if $|\lambda_0\zeta|>R$ then $\bkappa_{\zeta^{1/m}}^{-1}D\not\in \mathcal U$. Let $\delta:\Gr_{d''}(\Sing_{\wedge,\max})\times \Gr_{d'}(\Sing_{\wedge,\max})\to\R$ be as in Section~5 of \cite{GKM1}. Since $\mathscr V_{K_{\wedge,\lambda_0}}$ is the zero set of the continuous function $\mathcal V\mapsto \delta(\mathcal V,K_{\wedge,\lambda_0})$, there is a constant $\delta_0>0$ such that  $\delta(\bkappa_{\zeta^{1/m}}^{-1}D,K_{\wedge,\lambda_0})>\delta_0$ for every $\zeta$ such that $\lambda_0\zeta\in\Lambda_R$. Then \cite[Lemma~5.12]{GKM1} gives $(ii)$.

Conversely, let $(ii)$ be satisfied. Suppose $\Omega^{-}_{\Lambda}(D)\cap \mathscr V_{K_{\wedge,\lambda_0}} \not=\emptyset$ and let $D_0$ be an element in the intersection. 
Thus $D_0\cap K_{\wedge,\lambda_0}\not=\{0\}$ and there is a sequence $\{\zeta_\nu\}_{\nu=1}^{\infty}\subset\C$ with $\lambda_0\zeta_\nu \in \Lambda$ such that $|\zeta_\nu| \to\infty$ and $D_\nu=\bkappa_{\zeta_{\nu}^{1/m}}^{-1}D\to D_0$ as $\nu\to\infty$. 
If $\nu$ is such that $|\lambda_0\zeta_\nu|>R$, then $\lambda_0\zeta_\nu\in \resolv(A_{\wedge,\Dom_{\wedge}})$ and $D\cap K_{\wedge,\lambda_0\zeta_\nu}=\{0\}$, so $D_\nu\cap K_{\wedge,\lambda_0}=\{0\}$. Thus for $\nu$ large enough $D_\nu\not\in \mathscr V_{K_{\wedge,\lambda_0}}$. 

Pick $u\in D_0\cap K_{\wedge,\lambda_0}$ with $\|u\|=1$. Let $\pi_{D_\nu}$ be the orthogonal 
projection on $D_\nu$. Since $D_\nu\to D_0$ as $\nu\to\infty$, we have $\pi_{D_\nu}\to \pi_{D_0}$, so $u_\nu=\pi_{D_\nu}u\to \pi_{D_0}u=u$. For $\nu$ large, $D_\nu\not\in \mathscr V_{K_{\wedge,\lambda_0}}$, so $u_\nu-u\not=0$. Now, since $u_\nu\in D_\nu$, $u\in K_{\wedge,\lambda_0}$, and $u_\nu\to u$, 
\begin{equation*}
\pi_{D_\nu,K_{\wedge,\lambda_0}} 
\bigg(\frac{u_\nu-u}{\|u_\nu-u\|}\bigg) =
   \frac{u_\nu}{\|u_\nu-u\|} \to \infty \text{ as } \nu\to\infty.
\end{equation*}
But this contradicts $(ii)$. Hence $\Omega^{-}(D)\cap \mathscr V_{K_{\wedge,\lambda_0}}= \emptyset$.
\end{proof}

\bigskip
If $\Dom_\wedge$ is not $\kappa$-invariant, the asymptotic analysis of $F_{\wedge,\Dom_\wedge}(\lambda)^{-1}$ (through the analysis of the projection $\pi_{D,K_{\wedge,\lambda}}$) leads to rational functions of the form
\begin{equation}\label{RationalFunction}
 r(\lambda^{i\mu_{1}},\dots,\lambda^{i\mu_{N}},\log \lambda) 
 =\frac{p(\lambda^{i\mu_{1}},\dots,\lambda^{i\mu_{N}}, \log \lambda)}{q(\lambda^{i\mu_{1}},\dots,\lambda^{i\mu_{N}},\log \lambda)}
\end{equation}
with $\mu_{\ell}\in\R$ for $\ell=1,\dots,N$, where $q(z_1,\dots,z_{N+1})$ is a polynomial over $\C$ such that $|q(\lambda^{i\mu_{1}},\dots,\lambda^{i\mu_{N}}, \log \lambda)|>\delta$ for some $\delta>0$ and every sufficiently large $\lambda\in \Lambda$, and 
\begin{equation*}
 p(\lambda^{i\mu_{1}},\dots,\lambda^{i\mu_{N}},\log \lambda)=
 \sum_{\alpha,k} a_{\alpha k}(\lambda) \lambda^{i\alpha\mu}\log^k\lambda
\end{equation*}
with $\mu=(\mu_1,\dots,\mu_N)$, $\alpha\in\N_0^N$, $k\in\N_0$, and coefficients
\[ a_{\alpha k}\in C^\infty(\Lambda\minus 0, \L(\Dom_{\wedge}/\Dom_{\wedge,\min},\Dom_{\wedge,\max}/\Dom_{\wedge,\min}))
\]
such that $a_{\alpha k}(\varrho^m\lambda)=\bkappa_\varrho a_{\alpha k}(\lambda)$ for every $\varrho>0$.

\begin{proposition}\label{FwedgeInvSymbol}
If $\Lambda$ is a sector of minimal growth for $A_{\wedge,\Dom_\wedge}$, then for $R>0$ large enough, the family $F_{\wedge,\Dom_\wedge}(\lambda)= F_\wedge(\lambda)|_{\Dom_\wedge/\Dom_{\wedge,\min}}$ is invertible for $\lambda\in\Lambda_R$ and
$F_{\wedge,\Dom_\wedge}(\lambda)^{-1}$ has the following properties:
\begin{enumerate}[$(i)$]
\item $F_{\wedge,\Dom_\wedge}(\lambda)^{-1} \in C^\infty(\Lambda_R; \L(\Dom_{\wedge}/\Dom_{\wedge,\min},\Dom_{\wedge,\max}/\Dom_{\wedge,\min}))$, and for every $\alpha,\beta\in\N_0$ we have
\begin{equation}\label{FwedgeInvSymbolEstimate}
\norm{\bkappa_{|\lambda|^{1/m}}^{-1}\partial_\lambda^\alpha \partial_{\bar\lambda}^\beta\, F_{\wedge,\Dom_\wedge}(\lambda)^{-1}} = \Oh(|\lambda|^{\frac{\nu}{m}-\alpha-\beta}) \;\text{ as }\; |\lambda|\to\infty,
\end{equation}
with $\nu=0$;
\item for all $j\in\N_0$ there exist rational functions $r_j$ of the form \eqref{RationalFunction} and a decreasing sequence of real numbers $0=\nu_0> \nu_1> \cdots \to -\infty$ such that for every $J\in \N$, the difference
\begin{equation}\label{FwedgeInvAsympExp}
 F_{\wedge,\Dom_\wedge}(\lambda)^{-1}- \sum_{j=0}^{J-1} r_j(\lambda^{i\mu_{1}},\dots,\lambda^{i\mu_{N}},\log\lambda)\, \lambda^{\nu_j/m}
\end{equation}
satisfies \eqref{FwedgeInvSymbolEstimate} with $\nu=\nu_J+\eps$ for any $\eps>0$.
\end{enumerate}
\end{proposition}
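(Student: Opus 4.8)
The plan is to reduce the entire statement, via the identity \eqref{FwedgeInvHolProjection} established above, to a study of the single family of projections $\lambda\mapsto\pi_{\bkappa_{(\lambda/\lambda_0)^{1/m}}^{-1}D,\,K_{\wedge,\lambda_0}}$ on the finite dimensional space $\Sing_{\wedge,\max}=\Dom_{\wedge,\max}/\Dom_{\wedge,\min}$, and then to analyze that family by the dynamical methods of Sections~\ref{sec:LimitingOrbits} and \ref{sec:AsymptoticsOfProjection}. In \eqref{FwedgeInvHolProjection} the quantity $\bkappa_{|\lambda|^{1/m}}^{-1}F_{\wedge,\Dom_\wedge}(\lambda)^{-1}$ is exactly this projection conjugated by $\bkappa_{(\hat\lambda/\hat\lambda_0)^{1/m}}$ and left-multiplied by the fixed automorphism $\bkappa_{|\lambda_0|^{1/m}}^{-1}$. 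Since $\hat\lambda/\hat\lambda_0$ stays in a compact arc and $\zeta\mapsto\bkappa_\zeta$ extends holomorphically, hence smoothly with all derivatives bounded, to a neighborhood of that arc in $S_{\lambda_0,m}$, the factors $\bkappa_{(\hat\lambda/\hat\lambda_0)^{1/m}}^{\pm1}$ are uniformly bounded on $\Lambda$, depend on $\lambda$ only through $\hat\lambda$, and so every application of $\partial_\lambda$ or $\partial_{\bar\lambda}$ to them produces an extra gain $\Oh(|\lambda|^{-1})$. Therefore (i) with $\nu=0$ and the refined expansion (ii) reduce to the corresponding statements for the projection family; the invertibility of $F_{\wedge,\Dom_\wedge}(\lambda)$ on $\Lambda_R$ is contained in the equivalence (i)$\Leftrightarrow$(ii) of Theorem~\ref{EquivRayConditionsWedge} together with the construction of $F_{\wedge,\Dom_\wedge}$ above.

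Next I would introduce the generator $\a\in\End(\Sing_{\wedge,\max})$ determined by $\bkappa_\zeta^{-1}=e^{t\a}$ with $t=t(\lambda)=\frac{1}{m}\log(\lambda/\lambda_0)$ (principal branch), so that $\Re t\to\infty$ while $\Im t$ stays bounded as $|\lambda|\to\infty$ in $\Lambda$. Up to a fixed shift and sign coming from the definition of $\kappa_\varrho$, the eigenvalues of $\a$ are the numbers $i\sigma$ with $\sigma\in\spec_b(A)$, $-m/2<\Im\sigma<m/2$, and the sizes of its Jordan blocks are governed by the pole orders of the inverse of the conormal symbol of $A$. Hence, in a basis of $\Sing_{\wedge,\max}$ adapted to the generalized eigenspaces of $\a$, the entries of $e^{t\a}$ are finite $\C$-linear combinations of terms $t^{k}e^{i\sigma t}$; after the substitution $t=t(\lambda)$ these become, up to bounded factors, linear combinations of $\lambda^{i\mu_j}(\log\lambda)^{k}$ with $\mu_j\in\M$, $k\in\N_0$, each carrying a modulus $\sim|\lambda|^{-\Im\sigma/m}$ that records its growth or decay. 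This is the mechanism by which $\M$, and after forming differences the semigroup $\E$, enter the analysis.

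To pass from the curve $t\mapsto e^{t\a}D$ in $\Gr_{d''}(\Sing_{\wedge,\max})$ to the projection I would invoke the construction of Section~\ref{sec:LimitingOrbits}: for all complex $t$ with $\Re t$ large and $|\Im t|\le\theta$, the algorithm of Lemmas~\ref{ExistenceOfShadowLemma} and \ref{BasicLemma} yields a frame (a basis of $d''$ vectors) for $e^{t\a}D$, or rather for a shadow curve differing from it by terms decaying as $\Re t\to\infty$, whose coordinates in a fixed basis of $\Sing_{\wedge,\max}$ are explicit finite sums of products $t^{k}e^{\omega t}$, the exponents $\omega$ running through differences of eigenvalues of $\a$ (the decaying remainder contributing the $-\N_0$ part of $\E$). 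Fixing also a frame of $K_{\wedge,\lambda_0}$ and letting $\Phi(t)$ be the matrix of the combined frame in the fixed basis of $\Sing_{\wedge,\max}$, the projection $\pi_{e^{t\a}D,K_{\wedge,\lambda_0}}$ is read off from $\Phi(t)^{-1}$ by retaining the block projecting onto the $D$-part; its entries are then rational in the functions $\set{t^{k}e^{\omega t}}$ with denominator a power of $\det\Phi(t)$, hence, after $t=t(\lambda)$, rational functions of $(\lambda^{i\mu_1},\dots,\lambda^{i\mu_N},\log\lambda)$ of the form \eqref{RationalFunction}. The $\bkappa_\varrho$-homogeneity of the coefficients $a_{\alpha k}$ is inherited from the $\kappa$-equivariance built into $K_\wedge(\lambda)$ and into the construction.

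The hypothesis that $\Lambda$ is a sector of minimal growth does its work here, through Theorem~\ref{EquivRayConditionsWedge}: condition (iii), $\Omega^{-}_{\Lambda}(D)\cap\mathscr V_{K_{\wedge,\lambda_0}}=\emptyset$, keeps $e^{t\a}D$ in a compact subset of $\Gr_{d''}(\Sing_{\wedge,\max})\minus\mathscr V_{K_{\wedge,\lambda_0}}$ for $\Re t$ large; equivalently, by condition (ii), $\pi_{e^{t\a}D,K_{\wedge,\lambda_0}}$ is uniformly bounded; equivalently $|\det\Phi(t)|$ is bounded away from zero. This is precisely the required condition on the denominator $q$ in \eqref{RationalFunction}, and it makes the projection family, hence $F_{\wedge,\Dom_\wedge}(\lambda)^{-1}$, a symbol of order $0$ in the twisted sense of the appendix: its leading term is $\Oh(1)$, which is \eqref{FwedgeInvSymbolEstimate} with $\nu=0$ and $\alpha=\beta=0$; differentiating $t(\lambda)$ or any $t^{k}e^{\omega t}$ in $\lambda$ or $\bar\lambda$ costs a factor $|\lambda|^{-1}$ while differentiating $q^{-1}$ costs only bounded powers of $q^{-1}$, which yields the estimates for $\alpha,\beta>0$; and expanding $q^{-1}$ about its oscillatory log-polynomial leading part and grouping terms by homogeneity degree produces the expansion (ii), with the orders $\nu_j$ being exactly the elements of $\E$ that occur, decreasing to $-\infty$ because $\E$ has no finite accumulation point, and \eqref{FwedgeInvSymbolEstimate} with $\nu=\nu_J+\eps$ being the defining remainder estimate of the symbol calculus. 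I expect the genuine difficulty to lie in the construction of the frame along $e^{t\a}D$ in Section~\ref{sec:LimitingOrbits}, with $t$-dependence explicit enough to extract $\E$ and $\M$, and in checking that inverting $\Phi(t)$ does not leave the refined symbol classes; the boundedness of $q$ away from zero, supplied cleanly by Theorem~\ref{EquivRayConditionsWedge}, is exactly what makes this last point work. The derivative bounds and the order bookkeeping are routine once the frame is available.
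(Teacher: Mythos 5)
Your proposal follows essentially the same route as the paper: reduce to the projection family via \eqref{FwedgeInvHolProjection}, observe that the conjugating factors $\bkappa_{(\hat\lambda/\hat\lambda_0)^{1/m}}^{\pm 1}$ and $\bkappa_{|\lambda_0|^{1/m}}^{-1}$ are uniformly bounded (indeed homogeneous of degree $0$), then feed the projection family into the dynamical analysis of Sections~\ref{sec:LimitingOrbits}--\ref{sec:AsymptoticsOfProjection}, with the minimal growth hypothesis entering through the equivalence (iii) of Theorem~\ref{EquivRayConditionsWedge} to keep $|\det\Phi(t)|$, i.e.\ the denominator $q$, bounded away from zero.

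The one genuine difference is in how you obtain the derivative estimates in part (i). You propose to differentiate the explicit rational expansion of the projection term by term, tracking that each $\partial_\lambda$ or $\partial_{\bar\lambda}$ of $\lambda^{i\mu_j}$, $\log\lambda$, $\hat\lambda$, or $q^{-1}$ costs an extra $|\lambda|^{-1}$. This works, but it makes (i) logically downstream of the full expansion machinery of Theorem~\ref{AsymptOfProjB}. The paper instead proves (i) by a shortcut that bypasses the expansion entirely: it establishes the $\alpha=\beta=0$ estimate from the boundedness of the projection (Theorem~\ref{EquivRayConditionsWedge}(ii)), and then derives the higher-derivative estimates from the algebraic identity
\begin{equation*}
\partial_\lambda F_{\wedge,\Dom_\wedge}(\lambda)^{-1} = -F_{\wedge,\Dom_\wedge}(\lambda)^{-1}\bigl[\partial_\lambda F_{\wedge}(\lambda)\bigr]F_{\wedge,\Dom_\wedge}(\lambda)^{-1},
\end{equation*}
exploiting that $F_\wedge(\lambda)$ is $\bkappa$-homogeneous of degree $0$, so $\norm{\partial_\lambda F_\wedge(\lambda)\bkappa_{|\lambda|^{1/m}}}=\Oh(|\lambda|^{-1})$, and then induction. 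This gives the symbol estimates without ever needing to look inside the expansion, and is conceptually independent of Theorem~\ref{AsymptOfProjB}. Both arguments are correct; the paper's is shorter and cleaner for (i), while yours recovers (i) as a byproduct of proving (ii). For (ii) itself you and the paper do the same thing: invoke Theorem~\ref{AsymptOfProjB} and compose with the homogeneous symbols $k(\lambda)$ and $\hat k(\lambda)$, relying on the composition properties of the refined symbol class from the appendix to preserve the structure.
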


The phases $\mu_1,\dots,\mu_N$, and the exponents $\nu_j$ in \eqref{FwedgeInvAsympExp} depend on the boundary spectrum of $A$. In fact, $\mu_1,\dots,\mu_N\in\M$ and $\nu_j\in\E$ for all $j$,
see \eqref{PhasesMainThm} and \eqref{SemigroupDef}.

This suggests the introduction of operator valued symbols with a notion of asymptotic expansion in components that take into account the above rational structure and the $\kappa$-homogeneity of their numerators. The idea of course is to have a class of symbols whose structure is preserved under composition, differentiation, and asymptotic summation. In the appendix we propose such a class, $S_{\r}^{\nu^+}(\Lambda; E,\tilde E)$, a subclass of the operator-valued symbols $S^{\infty}(\Lambda; E,\tilde E)$ introduced by Schulze, where $E$ and $\tilde E$ are Hilbert spaces equipped with suitable group actions. 
The space $S_{\r}^{\nu^+}(\Lambda; E,\tilde E)$ is contained in $S^{\nu+\eps}(\Lambda; E,\tilde E)$ for any $\eps>0$.

As reviewed at the beginning of the appendix, the notion of anisotropic homogeneity in $S^{(\nu)}(\Lambda; E,\tilde E)$ depends on the group actions in $E$ and $\tilde E$. Thus homogeneity is always to be understood with respect to these actions. 

In the symbol terminology, we have
\begin{equation*}
F_{\wedge,\Dom_\wedge}(\lambda)^{-1} \in \big(S_\r^{0^+}\cap S^0\big) (\Lambda_R;\Dom_{\wedge}/\Dom_{\wedge,\min},\Dom_{\wedge,\max}/\Dom_{\wedge,\min}),
\end{equation*}
where $\Dom_{\wedge}/\Dom_{\wedge,\min}$ carries the trivial action and $\Dom_{\wedge,\max}/\Dom_{\wedge,\min}$ is equipped with $\bkappa_\varrho$.

\begin{proof}[Proof of Proposition~\ref{FwedgeInvSymbol}]
Since $\Lambda$ is a sector of minimal growth for $A_{\wedge,\Dom_\wedge}$, there exists $R>0$ such that $(A_{\wedge,\Dom_\wedge}-\lambda)$ is invertible for $\lambda\in\Lambda_R$, which by definition is equivalent to the invertibility of $F_{\wedge,\Dom_\wedge}(\lambda)$. Since the map $\zeta\mapsto \bkappa_{\hat\zeta^{1/m}}$ is uniformly bounded (recall that $\hat\zeta= \zeta/|\zeta|$), the relation \eqref{FwedgeInvHolProjection} together with Theorem~\ref{EquivRayConditionsWedge} give the estimate \eqref{FwedgeInvSymbolEstimate} for $\alpha=\beta=0$. If we differentiate with respect to $\lambda$ (or $\bar\lambda$), then
\begin{align*}
\partial_\lambda F_{\wedge,\Dom_\wedge}(\lambda)^{-1} &= - F_{\wedge,\Dom_\wedge}(\lambda)^{-1} \big[\partial_\lambda F_{\wedge,\Dom_\wedge}(\lambda)\big] F_{\wedge,\Dom_\wedge}(\lambda)^{-1} \\
&= - F_{\wedge,\Dom_\wedge}(\lambda)^{-1} \big[\partial_\lambda F_{\wedge}(\lambda)\big] F_{\wedge,\Dom_\wedge}(\lambda)^{-1}.
\end{align*}
Now, if we equip $\Dom_{\wedge}/\Dom_{\wedge,\min}$ with the trivial group action and $\Dom_{\wedge,\max}/\Dom_{\wedge,\min}$ with $\bkappa_\varrho$, then $F_\wedge(\lambda):\Dom_{\wedge,\max}/\Dom_{\wedge,\min} \to \Dom_{\wedge}/\Dom_{\wedge,\min}$ is homogeneous of degree zero, hence $\norm{\partial_\lambda F_{\wedge}(\lambda) \bkappa_{|\lambda|^{1/m}}}$ is $\Oh(|\lambda|^{-1})$ as $|\lambda|\to\infty$. Therefore, 
\[ \norm{\bkappa_{|\lambda|^{1/m}}^{-1}\partial_\lambda F_{\wedge,\Dom_\wedge}(\lambda)^{-1}}
   = \Oh(|\lambda|^{-1}) \;\text{ as } |\lambda|\to\infty, \]
since $\bkappa_{|\lambda|^{1/m}}^{-1}\partial_\lambda F_{\wedge,\Dom_\wedge}(\lambda)^{-1}$ can be written as
\begin{equation*}
 -\big[\bkappa_{|\lambda|^{1/m}}^{-1} F_{\wedge,\Dom_\wedge}(\lambda)^{-1}\big] \big[\partial_\lambda F_{\wedge}(\lambda) \bkappa_{|\lambda|^{1/m}}\big] \big[\bkappa_{|\lambda|^{1/m}}^{-1} F_{\wedge,\Dom_\wedge}(\lambda)^{-1}\big],
\end{equation*}
and the first and last factors are uniformly bounded by our previous argument. The corresponding estimates for arbitrary derivatives follow by induction.

Next, observe that by \eqref{FwedgeInvHolProjection},
\begin{equation*}
 F_{\wedge,\Dom_\wedge}(\lambda)^{-1} = \bkappa_{\zeta^{1/m}}
\Big(\pi_{\bkappa_{\zeta^{1/m}}^{-1} D,K_{\wedge,\lambda_0}} \Big) \bkappa_{\hat\zeta^{1/m}}^{-1}
\end{equation*}
with $\zeta=\lambda/\lambda_0$ and $\hat\zeta= \zeta/|\zeta|$. For $\lambda\in\Lambda_R$ let $k(\lambda)=\bkappa_{\zeta^{1/m}}$ and $\hat k(\lambda)=\bkappa_{\hat\zeta^{1/m}}^{-1}$. Then $k(\lambda)$ is a homogeneous symbol in $S^{(0)}(\Lambda_R; \Dom_{\wedge,\max}/\Dom_{\wedge,\min},\Dom_{\wedge,\max}/\Dom_{\wedge,\min})$, where the first copy of the quotient is equipped with the trivial action and the target space carries $\bkappa_\varrho$. Similarly, $\hat k(\lambda)\in S^{(0)}(\Lambda_R; \Dom_{\wedge}/\Dom_{\wedge,\min},\Dom_{\wedge,\max}/\Dom_{\wedge,\min})$ with respect to the trivial action on both spaces. 

Finally, the asymptotic expansion claimed in $(ii)$ follows from Theorem~\ref{AsymptOfProjB} together with the homogeneity properties of $k(\lambda)$ and $\hat k(\lambda)$.
\end{proof}

As a consequence of Proposition~\ref{FwedgeInvSymbol}, and since $B_\wedge(\lambda)$, $[1-B_\wedge(\lambda)(A_\wedge-\lambda)]$, and $T_\wedge(\lambda)$ in \eqref{ResolventWedgeStructure} are homogeneous of degree $-m$, $0$, and $-m$, in their respective classes, we obtain:

\begin{proposition}\label{ResolventAwedge}
If $\Lambda$ is a sector of minimal growth for $A_{\wedge,\Dom_\wedge}$, then for $R>0$ large enough, we have
\begin{equation*}
(A_{\wedge,\Dom_\wedge}-\lambda)^{-1} \in 
\big(S_{\r}^{(-m)^+}\cap S^{-m}\big)(\Lambda_R;x^{-m/2}L^2_b,\Dom_{\wedge,\max}),  
\end{equation*}
where the spaces are equipped with the standard action $\kappa_\varrho$. The components have orders $\nu^+$ with $\nu \in \E$ and their phases belong to $\M$, see \eqref{PhasesMainThm} and \eqref{SemigroupDef}.
\end{proposition}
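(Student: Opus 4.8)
The plan is to read the statement off from the representation \eqref{ResolventWedgeStructure}, feeding in Proposition~\ref{FwedgeInvSymbol} and the composition rules of the symbol calculus developed in the appendix.

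The first step is to pin down the symbolic nature of the three remaining factors in \eqref{ResolventWedgeStructure}, all of which is taken from \cite{GKM2,GKM5a}. The family $B_\wedge(\lambda)$ is anisotropically homogeneous of degree $-m$, so it lies in $S^{(-m)}(\Lambda_R;x^{-m/2}L^2_b,\Dom_{\wedge,\max})$ (its range is contained in $\Dom_{\wedge,\min}$), all spaces carrying $\kappa_\varrho$; $T_\wedge(\lambda)$ is homogeneous of degree $-m$ from $x^{-m/2}L^2_b$ with action $\kappa_\varrho$ to $\Dom_\wedge/\Dom_{\wedge,\min}$ with the trivial action; and $1-B_\wedge(\lambda)(A_\wedge-\lambda)$ is homogeneous of degree $0$. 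The point I would verify here, and on which the compatibility of all the group actions rests, is that $B_\wedge(\lambda)(A_{\wedge,\min}-\lambda)=\id$ on $\Dom_{\wedge,\min}$ --- immediate from $\binom{B_\wedge(\lambda)}{T_\wedge(\lambda)}$ being the inverse of $(\,A_{\wedge,\min}-\lambda\;\; K_\wedge(\lambda)\,)$ --- so that $1-B_\wedge(\lambda)(A_\wedge-\lambda)$ annihilates $\Dom_{\wedge,\min}$ and descends to a well-defined homogeneous degree-$0$ map $\Dom_{\wedge,\max}/\Dom_{\wedge,\min}\to\Dom_{\wedge,\max}$ intertwining $\bkappa_\varrho$ with $\kappa_\varrho$. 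Thus in the composition $[1-B_\wedge(\lambda)(A_\wedge-\lambda)]\,F_{\wedge,\Dom_\wedge}(\lambda)^{-1}\,T_\wedge(\lambda)$ the actions on successive spaces are $\kappa_\varrho$, then trivial, then $\bkappa_\varrho$, then $\kappa_\varrho$, which is exactly the configuration in which Proposition~\ref{FwedgeInvSymbol} applies.

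Next I would invoke the composition and asymptotic-summation properties of the refined classes $S_\r^{\nu^+}$, and of the ordinary classes $S^\nu$, from the appendix: under composition orders add, and the refined structure (rational functions of $\lambda^{i\mu_k}$ and $\log\lambda$ with $\kappa$-homogeneous coefficients) is preserved when composing with homogeneous symbols and when forming sums. Composing the pieces of orders $-m$, $0^+$, $0$ places the domain term of \eqref{ResolventWedgeStructure} in $(S_\r^{(-m)^+}\cap S^{-m})(\Lambda_R;x^{-m/2}L^2_b,\Dom_{\wedge,\max})$; since $B_\wedge(\lambda)\in S^{(-m)}$ is trivially also in $S_\r^{(-m)^+}\cap S^{-m}$, the sum stays in this class, with all Hilbert spaces carrying $\kappa_\varrho$. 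This is the asserted membership.

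Finally, for the statement on phases and orders: the oscillatory phases of the domain term are precisely those of $F_{\wedge,\Dom_\wedge}(\lambda)^{-1}$, since the homogeneous factors flanking it introduce no $\lambda^{i\mu_k}$ or $\log\lambda$ terms; hence they lie in $\M$ by Proposition~\ref{FwedgeInvSymbol}. If the components of $F_{\wedge,\Dom_\wedge}(\lambda)^{-1}$ have orders $\nu_j\in\E$ with $\nu_0=0$, those of the domain term have orders $\nu_j-m$; as $A\in x^{-m}\Diff^m_b(M;E)$ forces $m\in\N$, we have $-m\in-\N_0\subset\E$, and since $\E$ is an additive semigroup, $\nu_j-m\in\E$; the homogeneous summand $B_\wedge(\lambda)$ contributes the order $-m\in\E$. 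This gives the last sentence of the proposition. I expect the genuinely substantive work to be confined to the first step --- checking the symbolic mapping properties and the uniform behaviour of $\kappa_\varrho$ on the graph-norm space $\Dom_{\wedge,\max}$, and the descent of $1-B_\wedge(\lambda)(A_\wedge-\lambda)$ to the quotient --- while the rest is bookkeeping inside the calculus; all of the former is already available in \cite{GKM2,GKM5a} and the appendix.
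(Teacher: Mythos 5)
Your proposal is correct and takes essentially the same route as the paper, which disposes of this proposition in a single sentence: it is a direct consequence of Proposition~\ref{FwedgeInvSymbol}, the homogeneity of degrees $-m$, $0$, $-m$ of $B_\wedge(\lambda)$, $[1-B_\wedge(\lambda)(A_\wedge-\lambda)]$, and $T_\wedge(\lambda)$ in their respective symbol classes, and the composition and summation rules in the appendix. Your extra bookkeeping --- the chain of group actions $\kappa_\varrho\to\textup{trivial}\to\bkappa_\varrho\to\kappa_\varrho$, the descent of $1-B_\wedge(\lambda)(A_\wedge-\lambda)$ to the quotient, and the closure of $\E$ under adding $-m$ --- is all correct and just makes explicit what the paper leaves implicit.
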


\section{Limiting Orbits}
\label{sec:LimitingOrbits}

We will write $\Sing$ instead of $\Dom_{\wedge,\max}/\Dom_{\wedge,\min}$ and denote by $\a:\Sing\to\Sing$ the infinitesimal generator of the $\R_+$ action $(\varrho,v) \mapsto \bkappa_{\varrho}^{-1}v$ on $\Sing$, so that $\bkappa_{\varrho}^{-1}D = e^{t\a} D$ with $t=\log\varrho$. In what follows we allow $t$ to be complex. The spectrum of $\a$ is related to the boundary spectrum of $A$ by
\begin{equation}\label{TheSpectrum}
\spec\a = \set {-\im \sigma-m/2:\sigma\in \spec_b(A),\ -m/2<\Im\sigma<m/2}.
\end{equation}

For each $\lambda\in \spec\a$ let $\Sing_\lambda$ be the generalized eigenspace of $\a$ associated with $\lambda$, let $\pi_\lambda:\Sing\to\Sing$ be the projection on $\Sing_\lambda$ according to the decomposition 
\begin{equation*}
\Sing=\bigoplus_{\lambda\in\spec \a} \Sing_\lambda.
\end{equation*}
Define $N:\Sing\to \Sing$ and $N_\lambda:\Sing_\lambda \to \Sing_\lambda$ by 
\begin{equation*}
N=\a-\sum_{\lambda\in\spec \a} \lambda\pi_\lambda,\qquad N_\lambda= N|_{\Sing_\lambda},
\end{equation*}
respectively, and let 
\begin{equation}\label{aPrime}
\a':\Sing\to \Sing, \quad \a'=\sum_{\lambda\in\spec \a} (\im \Im \lambda)\pi_\lambda.
\end{equation}
For $\mu\in \Re(\spec \a)$ let 
\begin{equation*}
\tilde \Sing_\mu=\bigoplus_{\substack{\lambda\in \spec a\\\Re \lambda=\mu}} \Sing_{\lambda},
\end{equation*}
let $\tilde \pi_\mu:\Sing\to\Sing$ be the projection on $\tilde \Sing_\mu$ according to the decomposition
\begin{equation*}
\Sing=\bigoplus_{\mu\in \Re(\spec \a)} \tilde \Sing_{\mu},
\end{equation*}
and let 
\begin{equation*}
\tilde N_\mu=N|_{\tilde \Sing_\mu}:\tilde \Sing_\mu\to\tilde \Sing_\mu.
\end{equation*}
Fix an auxiliary Hermitian inner product on $\Sing$ so that $\bigoplus \Sing_{\lambda}$ is an orthogonal decomposition of $\Sing$. Then $\a'$ is skew-adjoint and $e^{t\a'}$ is unitary if $t$ is real.

\begin{proposition}\label{ExistenceOfShadow}
For every $D \in \Gr_{d''}(\Sing)$ there is $D_\infty\in \Gr_{d''}(\Sing)$ such that
\begin{equation}\label{LimitingSpace}
\dist(e^{t\a}D, e^{t\a'}D_\infty)\to 0 \text{ as }\Re t\to\infty\text{ in }\Strip_\theta=\set{t\in \C:|\Im t|\leq \theta}
\end{equation}
for any $\theta>0$. The set
\begin{equation*}
\Omega^+_{\theta}(D)=\set{D'\in \Gr_{d''}(\Sing):\exists\set{t_\nu}\subset \Strip_\theta:\Re t_\nu\to \infty \text{ and }\lim_{\nu\to\infty}e^{t_\nu \a}D=D'}
\end{equation*}
is the closure of 
\begin{equation*}
\set{e^{t\a'}D_\infty:t\in \Strip_\theta}.
\end{equation*}
\end{proposition}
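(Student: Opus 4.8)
The plan is, first, to split off the ``rotational'' part $e^{t\a'}$ of the flow so that \eqref{LimitingSpace} reduces to a convergence statement for a flow with real spectrum; second, to prove that convergence by an explicit frame‑straightening algorithm on $\Gr_{d''}(\Sing)$; and third, to read off $\Omega^+_\theta(D)$ by a recurrence argument.

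\emph{Reduction.} Since $\a'$ is a polynomial in the projections $\pi_\lambda$ and acts on each $\Sing_\lambda$ as the scalar $\im\Im\lambda$, it commutes with $N$ and with $\sum_\lambda\lambda\pi_\lambda$, hence with $\a$. Put $\mathfrak b=\a-\a'=\sum_\mu\mu\,\tilde\pi_\mu+N$, so that $\spec\mathfrak b=\Re(\spec\a)\subset\R$ and $\mathfrak b$ acts on $\tilde\Sing_\mu$ as $\mu I+\tilde N_\mu$. Commutation gives $e^{t\a}=e^{t\a'}e^{t\mathfrak b}$, hence $e^{t\a}D=e^{t\a'}\bigl(e^{t\mathfrak b}D\bigr)$. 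In the chosen orthogonal decomposition $\Sing=\bigoplus_\lambda\Sing_\lambda$, $e^{t\a'}$ acts as $e^{\im t\,\Im\lambda}$ on $\Sing_\lambda$, so $\norm{e^{t\a'}}$ and $\norm{e^{-t\a'}}$ are $\le e^{\theta M}$ for $t\in\Strip_\theta$ (with $M=\max_\lambda|\Im\lambda|$); consequently $e^{t\a'}$ induces a uniformly bi‑Lipschitz homeomorphism of $\Gr_{d''}(\Sing)$ over $\Strip_\theta$. Therefore, if $e^{t\mathfrak b}D$ converges in $\Gr_{d''}(\Sing)$ as $\Re t\to\infty$ within $\Strip_\theta$, then \eqref{LimitingSpace} holds with $D_\infty$ equal to that limit, which is moreover independent of $\theta$ because it is already determined by $t\in\R_+$. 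So it remains to prove convergence of $e^{t\mathfrak b}D$.

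\emph{The frame algorithm.} Let $\mu_1>\cdots>\mu_p$ be the distinct real parts; $e^{t\mathfrak b}$ acts on $\tilde\Sing_{\mu_j}$ as $e^{t\mu_j}e^{t\tilde N_{\mu_j}}$ with $e^{t\tilde N_{\mu_j}}$ polynomial in $t$, and on $\Strip_\theta$ one has $t\sim\Re t$, so asymptotics as $\Re t\to\infty$ are governed by the usual leading‑term bookkeeping. Starting from any basis of $D$, I would attach to each current frame vector $v$ the leading data of $e^{t\mathfrak b}v$: the largest stratum $\mu$ that occurs, the largest power $k$ of $t$ at that stratum, and the associated nonzero leading vector $\xi\in\tilde\Sing_\mu$. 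The algorithm detects two frame vectors with proportional leading vectors and removes the collision by adding to the one of lower order a multiple of the one of higher order; that multiple is $o(1)$ on $\Strip_\theta$ as $\Re t\to\infty$ (a negative power of $t$ if the two share a stratum, a decaying exponential times a polynomial otherwise), and the operation strictly lowers a lexicographic measure on the multiset of leading data (ordered first by $\mu$, then by $k$), so the process terminates. One is then left with a frame $w_1(t),\dots,w_{d''}(t)$ of $e^{t\mathfrak b}D$, obtained from the initial frame by a $t$‑dependent invertible matrix of the form $I+o(1)$ (uniformly invertible on $\Strip_\theta$ for $\Re t$ large), such that dividing each $w_i(t)$ by its leading scalar $c_it^{k_i}e^{t\mu_i}$ produces $\hat w_i(t)\to\xi_i$ with $\xi_1,\dots,\xi_{d''}$ linearly independent; hence $e^{t\mathfrak b}D=\LinSpan\{\hat w_1(t),\dots,\hat w_{d''}(t)\}\to\LinSpan\{\xi_1,\dots,\xi_{d''}\}=:D_\infty$. (Bare existence of the limit also follows from completeness of orbit closures of one‑parameter subgroups, but the explicit dependence on $t$ of the frame is what is needed in Section~\ref{sec:AsymptoticsOfProjection} and after.)

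\emph{Identification of $\Omega^+_\theta(D)$ and the main obstacle.} The set $\Omega^+_\theta(D)$ is closed by a diagonal argument in the compact metric space $\Gr_{d''}(\Sing)$. If $t_\nu\in\Strip_\theta$, $\Re t_\nu\to\infty$ and $e^{t_\nu\a}D\to D'$, then by \eqref{LimitingSpace} also $e^{t_\nu\a'}D_\infty\to D'$, and each $e^{t_\nu\a'}D_\infty\in\{e^{t\a'}D_\infty:t\in\Strip_\theta\}$, so $D'\in\overline{\{e^{t\a'}D_\infty:t\in\Strip_\theta\}}$. Conversely, fix $t_0\in\Strip_\theta$: by skew‑adjointness of $\a'$ the closure $\overline{\set{e^{s\a'}:s\in\R}}$ is a compact abelian subgroup of the unitary group of $\Sing$ and the forward ray $\set{e^{s\a'}:s\ge0}$ is dense in it, so there are $s_\nu\to+\infty$ with $e^{s_\nu\a'}\to I$; then $t_\nu:=t_0+s_\nu\in\Strip_\theta$ has $\Re t_\nu\to\infty$ and $e^{t_\nu\a'}D_\infty=e^{s_\nu\a'}(e^{t_0\a'}D_\infty)\to e^{t_0\a'}D_\infty$, whence $e^{t_\nu\a}D\to e^{t_0\a'}D_\infty$ by \eqref{LimitingSpace}, so $e^{t_0\a'}D_\infty\in\Omega^+_\theta(D)$ and closedness gives the reverse inclusion. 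Everything outside the frame algorithm is thus soft, and the real difficulty is that algorithm: one must order the eliminations so that the process provably terminates, keep the cumulative change of frame invertible with uniformly bounded inverse on $\Strip_\theta$, and — the delicate point — show that the terminal configuration ``no proportional leading vectors'' genuinely forces the normalized frame to converge to a \emph{basis} of a $d''$‑dimensional subspace (linear independence of the $\xi_i$), in the presence of several strata $\tilde\Sing_\mu$ and several Jordan blocks of each $\tilde N_\mu$, with $t$ ranging over the complex strip. This is exactly what Lemmas~\ref{ExistenceOfShadowLemma} and~\ref{BasicLemma} are designed to accomplish.
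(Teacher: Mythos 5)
Your argument for the first assertion follows the same route as the paper's: split $\a=\a'+\mathfrak{b}$ with $[\a,\a']=0$, use uniform bi-Lipschitzness of $e^{t\a'}$ on $\Strip_\theta$ to reduce to convergence of $e^{t\mathfrak{b}}D$, and produce a moving frame whose normalized limit exists; you also correctly identify where the real work is. The one substantive flaw is that the termination criterion you propose --- ``no two frame vectors have proportional leading vectors'' --- is too weak to guarantee linear independence of the limits $\xi_i$. Three frame vectors can have pairwise non-proportional leading vectors that are nonetheless linearly dependent, and two vectors with leading stratum $\tilde\Sing_\mu$ but different leading $t$-powers still have limits in the same $\tilde\Sing_\mu$ that can collide. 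What Lemma~\ref{BasicLemma} of the paper actually enforces, via the subspace-valued recursion \eqref{BasicLemma3}--\eqref{BasicLemma5}, is the strictly stronger property that each newly produced leading vector $G^m_j w$ lies outside the span of \emph{all} previously constructed leading spaces $\Vee^{m'}_{j'}$. As written your algorithm can halt in a configuration where the normalized frame fails to converge in $\Gr_{d''}(\Sing)$, so the elimination rule must be strengthened from ``no pairwise proportionality'' to ``no linear dependence modulo the previously produced leading spaces.''

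For the second assertion your argument genuinely diverges from the paper's and is cleaner. The paper factors the orbit map $f^{D_\infty}:\R^K\to\Gr_{d''}(\Sing)$ through a quotient torus $\mathbb T^{K'}$ and then chases a sequence $g_\nu\to e$ in $G\minus H$. You instead invoke the classical fact that a closed subsemigroup of a compact group is a subgroup, applied to $\overline{\smallset{e^{s\a'}:s\ge 0}}$ inside the compact abelian group $\overline{\smallset{e^{s\a'}:s\in\R}}$ of unitaries of $\Sing$, extract $s_\nu\to+\infty$ with $e^{s_\nu\a'}\to I$, and conclude directly for each $t_0\in\Strip_\theta$ via \eqref{LimitingSpace} and closedness of $\Omega^+_\theta(D)$. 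This is shorter and avoids the torus bookkeeping; what you give up is the paper's byproduct observation, stated after the proof, that $\Omega^+_\theta(D)$ is a union of embedded tori, which the explicit factorization makes visible but which is not part of the proposition.
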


We are using $\Omega^+$ for the limit set for consistency with common usage: we are letting $\Re t$ tend to infinity.

If $\F$ is a vector space, we will write $\F[t,t^{-1}]$ for the space of polynomials in $t$ and $t^{-1}$ with coefficients in $\F$ (\ie, the $\F$-valued rational functions on $\C$ with pole only at $0$). If $p\in \F[t,t^{-1}]$, let $\coeff_s(p)$ denote the coefficient of $t^s$ in $p$, and if $p\ne 0$, let 
\begin{equation*}
\ord(p)=\max\set {s\in \mathbb Z: \coeff_s(p)\ne 0}.
\end{equation*}

The proof of the proposition hinges on the following lemma.

\begin{lemma}\label{ExistenceOfShadowLemma}
Let $D\subset \Sing$ be an arbitrary nonzero subspace. Define $D^1=D$ and by induction define
\begin{equation*}
\mu_\ell=\max\set{\mu\in \Re(\spec \a):\tilde \pi_\mu D^\ell\ne 0}, \quad D^{\ell+1}=\ker \tilde \pi_{\mu_\ell}|_{D^\ell},\quad D_{\mu_\ell}=(D^{\ell+1})^\perp\cap D^\ell
\end{equation*}
starting with $\ell=1$. Let $L$ be the smallest $\ell$ such that $D^{\ell+1}=0$. Thus
\begin{equation}\label{PiEllIso}
\tilde \pi_{\mu_\ell}|_{D^\ell}:D_{\mu_\ell}\to \tilde \pi_{\mu_\ell}D_{\mu_\ell}\text{ is an isomorphism }
\end{equation}
and $D=\bigoplus_{\ell=1}^L D_{\mu_\ell}$. Then for each $\ell$ there are  elements
\begin{equation*}
\tilde p^\ell_k\in \tilde \pi_{\mu_\ell}D_{\mu_\ell}[t,1/t], \quad k=1,\dotsc,\dim D_{\mu_\ell},
\end{equation*}
such that with
\begin{equation*}
\tilde q^\ell_k(t) = e^{t \tilde N_{\mu_\ell}} \tilde p^\ell_k(t)
\end{equation*}
we have that $\ord \tilde q^\ell_k=0$ and the elements
\begin{equation*}
g^\ell_k=\coeff_0(\tilde q^\ell_k), \quad k=1,\dotsc,\dim D_{\mu_\ell}, \text{ are  independent}.
\end{equation*}
\end{lemma}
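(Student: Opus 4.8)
The plan is to reduce everything to a single block $\tilde\Sing_\mu$ first, then reassemble. Fix $\ell$ and write $\mu=\mu_\ell$, $V=D_{\mu_\ell}$, $W=\tilde\pi_\mu D_{\mu_\ell}\subset\tilde\Sing_\mu$, and recall from \eqref{PiEllIso} that $\tilde\pi_\mu|_V:V\to W$ is an isomorphism. On $\tilde\Sing_\mu$ the operator $\a$ acts as $\mu\cdot\id+\tilde\a'_\mu+\tilde N_\mu$ where $\tilde\a'_\mu$ is skew-adjoint with purely imaginary spectrum (the $i\Im\lambda$ pieces) and $\tilde N_\mu$ is nilpotent; since these commute, $e^{t\a}|_{\tilde\Sing_\mu}=e^{\mu t}e^{t\tilde\a'_\mu}e^{t\tilde N_\mu}$. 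The factor $e^{\mu t}$ is a harmless scalar (it dies in the Grassmannian) and $e^{t\tilde\a'_\mu}$ is unitary, so the only source of growth/decay in $t$ is the polynomial factor $e^{t\tilde N_\mu}$, which is exactly why the coefficient bookkeeping in $\F[t,t^{-1}]$ is the right language. What I want is a basis $g^\ell_1,\dots,g^\ell_{\dim V}$ of a $(\dim V)$-dimensional subspace of $\tilde\Sing_\mu$ that is the ``leading term'' of $e^{t\tilde N_\mu}$ applied to a well-chosen basis of $W$, pulled so that after multiplying by suitable powers of $t$ each curve has a nonzero limit and the limits stay independent.

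**First I would** set up the following inductive/greedy construction of the $\tilde p^\ell_k$. Start with any basis $w_1,\dots,w_r$ of $W$ (with $r=\dim V$) and consider $e^{t\tilde N_\mu}w_j$; each is a polynomial in $t$ (no negative powers yet) of some order, and its order-$d_j$ coefficient is $\tilde N_\mu^{d_j}w_j/d_j!$. Order the $w_j$ so the orders are nonincreasing. The leading coefficients may fail to be independent; the key move is: if the top-order leading coefficients are dependent, take a linear combination that kills the leading coefficient of one of them, thereby strictly lowering its order; iterate. More systematically, filter $W$ by $W\supset \tilde N_\mu W\cap W'\supset\cdots$ style subspaces and peel off the image of the highest power of $\tilde N_\mu$ first — this is essentially bringing $\tilde N_\mu|_W$-related data into Jordan-type normal form relative to $W$. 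After finitely many steps (bounded by $r$ times the nilpotency index) one obtains combinations $\tilde p^\ell_k(t)$, each of the form $w_{\sigma(k)} + (\text{lower-order in }t)\cdot(\text{stuff})$ — so genuinely in $W[t,1/t]$, in fact in $W[t]$ — such that $\tilde q^\ell_k(t)=e^{t\tilde N_\mu}\tilde p^\ell_k(t)$ all have $\ord=0$ and the $\coeff_0(\tilde q^\ell_k)=g^\ell_k$ are linearly independent. One may further rescale by a power of $t$ if one prefers $\ord=0$ exactly rather than $\le$ something; allowing $1/t$ in $\tilde p^\ell_k$ gives that slack.

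**The main obstacle** is arranging the normalization ``$\ord\tilde q^\ell_k=0$ and the $g^\ell_k$ independent'' simultaneously for all $k$ within a single block — naively balancing the orders can recreate a dependency among the leading coefficients. The clean way around it is to make the construction hierarchical: process the vectors in decreasing order of their current $\tilde N_\mu$-order, and when we reduce the order of one vector we re-insert it into the queue and never touch the already-finalized, higher-order ones' leading coefficients; because $\tilde N_\mu$ is nilpotent this terminates, and the finalized leading coefficients $g^\ell_k$ live in a strictly increasing flag of subspaces of $\tilde\Sing_\mu$ built from $\ker\tilde N_\mu\subset\ker\tilde N_\mu^2\subset\cdots$, which forces their independence. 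I would phrase this as a short sub-lemma: given a nilpotent $\tilde N$ on a space $Z$ and a subspace $W\subset Z$, there is a basis $p_1,\dots,p_{\dim W}$ of $W$ (we may replace $p_k$ by $\sum_j c_{kj}(t)p_j$ with $c_{kj}\in\C[t,1/t]$) whose images $e^{t\tilde N}p_k$ have order $0$ after normalization with independent constant terms — proved by induction on $\dim W$ by splitting off one vector realizing the maximal order.

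**Finally I would** note there is nothing to do to combine blocks: the decomposition $D=\bigoplus_{\ell=1}^L D_{\mu_\ell}$ is already given, each block is handled in its own $\tilde\Sing_{\mu_\ell}$, and since the $\tilde\Sing_{\mu_\ell}$ are in direct sum, independence of $\{g^\ell_k\}_k$ within each block is all that is asserted and all that is needed. So the proof is: (a) reduce to one block via the commuting splitting $\a=\mu+\tilde\a'_\mu+\tilde N_\mu$ and absorb the unitary/scalar factors; (b) prove the nilpotent sub-lemma by induction on dimension, peeling off a maximal-order vector each step; (c) transport the resulting $p_k\in W[t,1/t]$ back through the isomorphism $\tilde\pi_\mu|_V$ to get the statement as phrased with $\tilde p^\ell_k\in\tilde\pi_{\mu_\ell}D_{\mu_\ell}[t,1/t]$. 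I expect step (b), and specifically keeping the leading coefficients independent while normalizing orders, to be the only genuinely delicate point; everything else is bookkeeping with $e^{t\tilde N_\mu}$ being polynomial in $t$.
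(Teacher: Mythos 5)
Your reduction to a single block $\tilde\Sing_\mu$ and the greedy order-lowering scheme are the same shape as what the paper does via Lemma~\ref{BasicLemma}, but the two justifications you offer for the hard parts do not hold. For the independence of the $g^\ell_k$, your claim that they ``live in a strictly increasing flag built from $\ker\tilde N_\mu\subset\ker\tilde N_\mu^2\subset\cdots$'' is false: in the paper's own worked example the finalized leading coefficients are, up to scalars, $e_{0,1}$, $e_{1,1}$, and $c\,e_{0,2}+e_{1,1}$, and the first two both lie in $\ker\tilde N$, so no level of the $\ker\tilde N^k$ flag separates them (more bluntly, $\dim W$ can exceed the nilpotency index, so the $g^\ell_k$ cannot possibly all sit in distinct flag levels). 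The actual mechanism is the stopping rule itself: one passes to the next reduction level exactly on $\W_{j,m+1}=(G^m_j)^{-1}\bigl(\text{span of the earlier }\Vee^{m'}_{j'}\bigr)$, i.e.\ \eqref{BasicLemma3}, while $\W^m_j$ is by construction orthogonal to $\W_{j,m+1}$; independence is then the short direct-sum argument opening the proof of Lemma~\ref{BasicLemma}. If your greedy queue is organized with that stopping rule the independence is automatic, but the flag is not what makes it so.

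For termination, ``because $\tilde N_\mu$ is nilpotent this terminates'' (with a bound of $r$ times the nilpotency index) is not a proof: nilpotency bounds the order $n^m_J=\ord Q^m_J$ from above, not from below, and after each reduction the new leading coefficient must again be tested against the span of everything already finalized, so a priori the order could keep dropping --- past zero, or all the way to $Q^M_J\equiv 0$ --- while the remaining space $\W_{J,M}$ is still nonzero. Ruling this out is the genuinely delicate step and it occupies the last third of the proof of Lemma~\ref{BasicLemma}: one derives the explicit form~\eqref{mthRatbis} of $P^M_J$, isolates the $\pmb\nu_{\max}$ contribution whose coefficient is the inclusion $\W_{J,M}\hookrightarrow\W_{J,0}$, and uses injectivity of $\tilde N^{s_J}$ on $\W_{J,0}$ to show that either $Q^M_J=0$ or $n^M_J<0$ forces $\W_{J,M}=0$. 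You correctly identified this as the delicate point, but then asserted it rather than proving it.
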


The proof will be given later.

\begin{proof}[Proof of Proposition \ref{ExistenceOfShadow}]
Suppose $D\subset \Sing$ is a subspace. With the notation of Lemma \ref{ExistenceOfShadowLemma} let 
\begin{equation*}
D_{\mu_\ell,\infty}=\LinSpan\set{g^\ell_k: k=1,\dotsc,\dim D_{\mu_\ell}}.
\end{equation*}
Since $e^{t\tilde N_{\mu_\ell}}$ is invertible and $\tilde q^\ell_k(t)=g^\ell_k+\tilde h^\ell_k(t)$ with $\tilde h^\ell_k(t)=\Oh(t^{-1})$ for large $\Re t$ ($t\in \Strip_\theta$), the vectors $\tilde p^\ell_k(t)$ form a basis of $\tilde \pi_{\mu_\ell}D_{\mu_\ell}$ for all sufficiently large $t$. Using \eqref{PiEllIso} we get unique elements 
\begin{equation*}
p^\ell_k\in D_{\mu_\ell}[t,1/t],\quad \tilde \pi_{\mu_\ell}p^\ell_k=\tilde p^\ell_k.
\end{equation*}
For each $\ell$ the $p^\ell_k(t)$ give a basis of $D_{\mu_\ell}$ if $t$ is large enough, and therefore also the
\begin{equation*}
e^{-t\mu_\ell}p^\ell_k(t), \quad k=1,\dotsc,\dim D_{\mu_\ell},
\end{equation*}
form a basis of $D_{\mu_\ell}$ for large $\Re t$. Consequently, the vectors
\begin{equation*}
e^{t\a}e^{-t\mu_\ell}p^\ell_k(t), \quad k=1,\dotsc,\dim D_{\mu_\ell},\ \ell=1,\dotsc, L,
\end{equation*}
form a basis of $e^{t\a}D$ for large $\Re t$. We have, with $N_\lambda=N|_{\Sing_\lambda}$,
\begin{align*}
e^{t\a}e^{-t\mu_\ell}p^\ell_k(t) &=\sum_{\lambda\in \spec \a} e^{t(\lambda -\mu_\ell)}e^{t N_\lambda}\pi_\lambda p^\ell_k(t)\\
&=\sum_{\substack{\lambda\in \spec \a\\\Re\lambda=\mu_\ell}} e^{t(\lambda -\mu_\ell)}e^{tN_\lambda}\pi_\lambda p^\ell_k(t) +\sum_{\substack{\lambda\in \spec \a\\\Re\lambda<\mu_\ell}} e^{t(\lambda -\mu_\ell)}e^{tN_\lambda}\pi_\lambda p^\ell_k(t)\\
&=e^{t\a'} e^{t\tilde N_{\mu_\ell}} \tilde \pi_{\mu_\ell} p^\ell_k(t) +\sum_{\substack{\lambda\in \spec \a\\\Re\lambda<\mu_\ell}} e^{t(\lambda -\mu_\ell)}e^{tN_\lambda}\pi_\lambda p^\ell_k(t)\\
&=e^{t\a'} (g^\ell_k+\tilde h^\ell_k(t)) +\sum_{\substack{\lambda\in \spec \a\\\Re\lambda<\mu_\ell}} e^{t(\lambda -\mu_\ell)}e^{tN_\lambda}\pi_\lambda p^\ell_k(t)
\end{align*}
so $e^{t\a}e^{-t\mu_\ell}p^\ell_k(t)=e^{t\a'}g^\ell_k+h^\ell_k(t)$ where $h^\ell_k(t)=\Oh(t^{-1})$ as $\Re t\to\infty$ in $\Strip_\theta$. It follows that \eqref{LimitingSpace} holds with $D_\infty=\bigoplus_{\ell=1}^L D_{\mu_\ell,\infty}$. This completes the proof of the first assertion of Proposition \ref{ExistenceOfShadow}.

\begin{remark}\label{RemarkAlignedSpec}
The formulas for the $v_{k}^\ell(t)=e^{t\a}e^{-t\mu_\ell}p^\ell_k(t)$ given in the last displayed line above will eventually give the asymptotics of the projections $\pi_{e^{t\a}D,K}$ (assuming $\mathscr V_K\cap \Omega^+(D)=\emptyset$, see Theorem~\ref{EquivRayConditionsWedge}). Note that the shift by $m/2$ in \eqref{TheSpectrum} is irrelevant and that the coefficients of the exponents in the formula for $v_k^\ell(t)$ belong to
\begin{equation}\label{GeneratorOfSemiGroup-a}
\set{\lambda-\Re\lambda':\lambda,\lambda'\in \spec\a,\ \Re\lambda\leq \Re\lambda'}
\end{equation}
Because of \eqref{TheSpectrum}, this set is equal to
\begin{equation}\label{GeneratorOfSemiGroup}
-\im \set{\sigma -\im\Im\sigma':\sigma,\sigma'\in \spec_b(A),\ -m/2<\Im\sigma \leq \Im\sigma'<m/2}.
\end{equation}
If all elements of $\set{\sigma\in \spec_b(A):-m/2<\Im\sigma<m/2}$ have the same real part, then all elements of \eqref{GeneratorOfSemiGroup-a} have the same imaginary part $\nu$, the operator $\a'$ is multiplication by $\im \nu$, and we can divide each of the $v_k^\ell(t)$ by $e^{\im t \nu}$ to obtain a basis of $e^{t\a}D$ in which the coefficients of the exponents are all real.
\end{remark}

To prove the second assertion of the proposition, we note first that \eqref{LimitingSpace} implies that $\Omega^+_\theta(D)$ is contained in the closure of $\smallset{e^{t\a'}D_\infty :t\in \Strip_\theta}$. To prove the opposite inclusion, it is enough to show that
\begin{equation}\label{OppInc}
e^{t\a'}D_\infty\in \Omega^+_\theta(D)
\end{equation}
for each $t\in \Strip_\theta$, since $\Omega^+_\theta(D)$ is a closed set. Writing $e^{t\a'}D_\infty$ as $e^{\im \Im t\,\a'}(e^{\Re t\,\a'}D_\infty)$ further reduces the problem to the case  $\theta=0$ (that is, $t$ real). While proving \eqref{OppInc} we will also show that the closure  $\mathcal X$ of $\smallset{e^{t \a'}D_\infty:t\in \R}$ is an embedded torus, equal to $\Omega^+_0(D)$. 

Let $\set{\lambda_k}_{k=1}^K$ be an enumeration of the elements of $\spec \a$. Define $f:\R^K\times \Gr_{d''}(\Sing)\to \Gr_{d''}(\Sing)$ by
\begin{equation*}
f(\tau,D)=e^{\sum \im \tau^k\pi_{\lambda_k}}D,
\end{equation*}
$\tau=(\tau^1,\dotsc,\tau^K)$. This is a smooth map. Since the $\pi_{\lambda_k}$ commute with each other, $f$ defines a left action of $\R^K$ on $\Gr_{d''}(\Sing)$. For each $\tau\in \R^K$ define
\begin{equation*}
f_\tau:\Gr_{d''}(\Sing)\to \Gr_{d''}(\Sing),\quad f_\tau(D)=f(\tau,D)
\end{equation*}
and for each $D\in \Gr_{d''}(\Sing)$ let 
\begin{equation*}
f^D:\R^K\to \Gr_{d''}(\Sing),\quad f^D(\tau)=f(\tau,D).
\end{equation*}
The maps $f_\tau$ are diffeomorphisms.

We claim that $f^{D_\infty}$ factors as the composition of a smooth group homomorphism $\phi:\R^K\to\mathbb T^{K'}$ onto a torus and an embedding $h:\mathbb T^{K'}\to \Gr_{d''}(\Sing)$,
\begin{center}
\begin{picture}(55,55)
\put(-15,43){$\R^K$}
\put(46,43){$\mathbb T^{K'}$}
\put(-25,3){$\Gr_{d''}(\Sing)$.}
\put(-12,38){\vector(0,-1){23}}
\put(-32,26){\footnotesize $f^{D_\infty}$}
\put(16,53){\footnotesize $\phi$}
\put(3,46){\vector(1,0){37}}
\put(25,18){\footnotesize $h$}
\put(40,38){\curve(-28,-21,0,0)}
\put(9,15){\vector(-4,-3){1}}
\end{picture}
\end{center}
Both $\phi$ and $h$ depend on $D_\infty$.

To prove the claim we begin by observing that $\smallset{u\in T\R^K: df^{D_\infty}(u)=0}$ is translation-invariant. Indeed, let $\tau_0\in \R^K$, let $v=(v^1,\dotsc,v^K)\in \R^K$, and let $\gamma:\R\to\R^K$ be the curve $\gamma(t)=tv$. Then 
\begin{equation*}
f^{D_\infty}(\tau_0+\gamma(t))=f_{\tau_0}\circ f^{D_\infty}(\gamma(t))
\end{equation*}
so 
\begin{equation*}
df^{D_\infty}(\sum v^k\partial_{\tau^k}|_{\tau_0}) = df_{\tau_0}\circ df^{D_\infty}(\sum v^k\partial_{\tau^k}|_0).
\end{equation*}
Since $f_{\tau_0}$ is a diffeomorphism,
\begin{multline*}
\sum v^k\partial_{\tau^k}|_{\tau_0}\in [\ker df^{D_\infty}:T_{\tau_0}\R^K\to T_{f^{D_\infty}(\tau_0)} \Gr_{d''}(\Sing)] \\
\iff \sum v^k\partial_{\tau^k}|_0\in [\ker df^{D_\infty}:T_0\R^K\to T_{D_\infty} \Gr_{d''}(\Sing)].
\end{multline*}
Thus the kernel of $df^{D_\infty}$ is translation-invariant as asserted.

Identify the kernel of $d f^{D_\infty}:T_0\R^K\to T_{D_\infty}\Gr_{d''}(\Sing)$ with a subspace $\mathcal S$ of $\R^K$ in the standard fashion. Then $f^{D_\infty}$ is constant on the translates of $\mathcal S$ and if $\mathcal R$ is a subspace of $\R^K$ complementary to $\mathcal S$, then $f^{D_\infty}|_{\mathcal R}$ is an immersion. Renumbering the elements of $\spec \a'$ (and reordering the components of $\R^K$ accordingly) we may take $\mathcal R=\R^{K'}\times 0$.

Since $f^{D_\infty}|_{\mathcal R}$ is an immersion, the sets $\mathcal F_{D'}=\set{\tau\in \mathcal R: f^{D_\infty}(\tau)=D'}$ are discrete for each $D'\in f^{D_\infty}(\mathcal R)$. Using again the property $f^{D_\infty}(\tau_1+\tau_2)=f_{\tau_1}\circ f^{D_\infty}(\tau_2)$ for arbitrary $\tau_1$, $\tau_2\in \R^K$, we see that $\mathcal F_{D_\infty}$ is an additive subgroup of $\mathcal R$ and that $f^{D_\infty}$ is constant on the lateral classes of $\mathcal F_{D_\infty}$. Therefore $f^{D_\infty}|_{\mathcal R}$ factors through a (smooth) homomorphism $\phi:\mathcal R\to \mathcal R/\mathcal F_{D_\infty}$ and a continuous map $\mathcal R/\mathcal F_{D_\infty}\to \Gr_{d''}(\mathcal E)$. Since $f^{D_\infty}$ is $2\pi$-periodic in all variables, $2\pi \mathbb Z^{K'}\subset \mathcal F_{D_\infty}$, so $\mathcal R/\mathcal F_{D_\infty}$ is indeed a torus $\mathbb T^{K'}$. Since $\phi$ is a local diffeomorphism and $f^{D_\infty}$ is smooth, $h$ is smooth.

With this, the proof of the second assertion of the proposition goes as follows. Let $L\subset \R^K$ be the subspace generated by $(\Im\lambda_1,\dotsc,\Im\lambda_K)$. This is a line or the origin. Its image by $\phi$ is a subgroup $H$ of $\mathbb T^{K'}$, so the closure of $\phi(L)$ is a torus $G\subset \mathbb T^{K'}$, and $h(\overline{\phi(L)})$ is an embedded torus $\mathcal X\subset \Gr_{d''}(\Sing)$. On the other hand, $h\circ \phi(L)=f^{D_\infty}(L)$ is the image of the curve $\gamma:t\to e^{t\a'}D_\infty$, so the closure of the image of $\gamma$ is $\mathcal X$. Clearly, $\Omega^+_0(D)\subset \mathcal X$. The equality of $\Omega^+_0(D)$ and $ \mathcal X$ is clear if $\gamma$ is periodic or $L=\set{0}$. So assume that $\gamma$ is not periodic and $L\ne\set{0}$. Then $H\ne G$ and there is a sequence $\set{g_\nu}_{\nu=1}^\infty\subset G\minus H$ such that $g_\nu\to e$, the identity element of $G$. Let $v$ be an element of the Lie algebra of $G$ such that $H$ is the image of $t\mapsto \exp(tv)$. For each $\nu$ there is a sequence $\set{t_{\nu,\rho}}_{\rho=1}^\infty$, necessarily unbounded because $g_\nu\notin H$, such that $g_\nu = \lim_{\rho\to \infty}\exp(t_{\nu,\rho}v)$. We may assume that $\set{t_{\nu,\rho}}_{\rho=1}^\infty$ is monotonic, so it diverges to $+\infty$ or to $-\infty$. In the latter case we replace $g_\nu$ by its group inverse, so we may assume that $\lim_{\rho\to \infty} t_{\nu,\rho}=\infty$ for all $\nu$. Thus if $g\in H$ is arbitrary, then $h(g g_\nu)\in \Omega^+_0(D)$ and $h(g g_\nu)$ converges to $h(g)$. Since $\Omega^+_0(D)$ is closed, this shows that $h\circ \phi(H)\subset \Omega^+_0(D)$. Consequently, also $\mathcal X\subset \Omega^+_0(D)$.

This completes the proof of the second assertion of Proposition~\ref{ExistenceOfShadow}.
\end{proof}

As a consequence of the proof we have that $\Omega^+_\theta(D)$ is a union of embedded tori:
\begin{equation*}
\Omega^+_\theta(D)=\bigcup_{s\in [-\theta,\theta]} e^{\im s\a'}\overline{\set{e^{t \a'}D_\infty:t\in \R}}.
\end{equation*}

The proof of Lemma \ref{ExistenceOfShadowLemma} will be based on the following lemma. The properties of the elements $\tilde p^\ell_k\in \tilde \pi_{\mu_\ell} D_{\mu_\ell}[t,1/t]$ whose existence is asserted in Lemma \ref{ExistenceOfShadowLemma} pertain only $\tilde \Sing_{\mu_\ell}$, $\tilde N_{\mu_\ell}$, and the subspace $\tilde \pi_{\mu_\ell}D_{\mu_\ell}$ of $\tilde \Sing_{\mu_\ell}$. For the sake of notational simplicity we let $\W=\tilde \pi_{\mu_\ell}D_{\mu_\ell}$ and drop the $\mu_\ell$ from the notation. The space $\tilde \Sing$ comes equipped with some Hermitian inner product, and $\tilde N$ is nilpotent.

\begin{lemma}\label{BasicLemma}
There is an orthogonal decomposition
\begin{equation*}
\W=\bigoplus_{j=0}^J\bigoplus_{m=0}^{M_j} \W^m_j
\end{equation*}
$($with nontrivial summands$)$ and nonzero elements
\begin{equation}\label{BasicLemma0}
P^m_j\in \Hom(\W_{j,m},\W_j')[t,t^{-1}]
\end{equation}
where 
\begin{equation*}
\W_{j,m}=\bigoplus_{m'=m}^{M_j} \W^{m'}_j,\quad \W_j'=\bigoplus_{j'=0}^j\W_{j',0},
\end{equation*}
satisfying the following properties.
\begin{enumerate}
\item \label{BasicLemmaItem1} $P^0_j=\id_{\W_{j,0}}$.
\item \label{BasicLemmaItem2} Let
\begin{equation*}
Q^m_j(t)=e^{t\tilde N}P^m_j(t),\quad n^m_j=\ord(Q^m_j).
\end{equation*}
Then the sequence $\set{n^m_j}_{m=0}^{M_j}$ is strictly decreasing and consists of nonnegative numbers.
\item \label{BasicLemmaItem3} Let
\begin{equation}\label{BasicLemma2}
G^m_j=\coeff_{n^m_j}(Q^m_j),\quad \Vee^m_j=G^m_j(\W^m_j).
\end{equation}
Then
\begin{equation}\label{BasicLemma3}
\W_{j,m+1}=(G^m_j)^{-1}\bigg(\bigoplus_{j'=0}^{j-1}\bigoplus_{m'=0}^{M_{j'}}\Vee^{m'}_{j'} + \bigoplus_{m'=0}^{m-1} \Vee^{m'}_j\bigg).
\end{equation}
\item \label{BasicLemmaItem4} There are unique maps $F^{m',m+1}_{j',j}:\W_{j,m+1}\to \W^{m'}_{j'}$ such that 
\begin{equation}\label{BasicLemma4}
G^m_j + \sum_{j'=0}^{j-1}\sum_{m'=0}^{M_{j'}}G^{m'}_{j'}F^{m',m+1}_{j',j} + \sum_{m'=0}^{m-1} G^{m'}_j F^{m',m+1}_{j,j}=0
\end{equation}
holds on $\W_{j,m+1}$, and
\begin{equation}\label{BasicLemma5}
P^{m+1}_j=
P^m_j + \sum_{j'=0}^{j-1}\sum_{m'=0}^{M_{j'}} t^{n^m_j-n^{m'}_{j'}}P^{m'}_{j'} F^{m',m+1}_{j',j} + \sum_{m'=0}^{m-1} t^{n^m_j-n^{m'}_{j}}P^{m'}_j F^{m',m+1}_{j,j}.
\end{equation}
\end{enumerate}
\end{lemma}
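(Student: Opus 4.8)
The plan is to construct the decomposition and the maps $P^m_j$ by a double induction: an outer induction on $j$ (the "level") and an inner induction on $m$ (the "depth" within a level). At the base of the outer induction, set $\W_0' = \W_{0,0} = \W$ if $\tilde N = 0$, or more generally identify the "leading" block as follows. First I would dispose of the role of $\tilde N$: since $\tilde N$ is nilpotent and $\W$ is only an abstract subspace of $\tilde \Sing$ (not $\tilde N$-invariant in general), the natural invariant attached to a polynomial $P(t)\in\Hom(\W,\tilde\Sing)[t,t^{-1}]$ is the order of $Q(t)=e^{t\tilde N}P(t)$ and its leading coefficient $\coeff_{\ord Q}(Q)$. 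The key elementary observation is that if $P(t)$ takes values in a subspace on which we have not yet "used up" the action of $\tilde N$, then multiplying by $e^{t\tilde N}$ raises the order by a controlled amount, and the leading coefficient lands in the span of $\tilde N$-images of the old leading coefficients.

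The inductive step, which is the heart of the matter, goes as follows. Suppose we have built $\W^{m'}_{j'}$, $P^{m'}_{j'}$, $Q^{m'}_{j'}$, $n^{m'}_{j'}$, $G^{m'}_{j'}$, $\Vee^{m'}_{j'}$ for all $(j',m')$ preceding $(j,m)$ in the lexicographic-within-level order, with properties \eqref{BasicLemmaItem1}–\eqref{BasicLemmaItem4} holding so far. To pass from $m$ to $m+1$ within level $j$: form the subspace $\bigoplus_{j'<j}\bigoplus_{m'} \Vee^{m'}_{j'} + \bigoplus_{m'<m}\Vee^{m'}_j$ inside $\W_j'$, and \emph{define} $\W_{j,m+1}$ by pulling it back under $G^m_j$, i.e.\ by \eqref{BasicLemma3}; this forces $\W^m_j$ to be a complement of $\W_{j,m+1}$ in $\W_{j,m}$, chosen orthogonally. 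The point of this definition is precisely that $G^m_j|_{\W_{j,m+1}}$ now \emph{does} factor through the span of the earlier leading coefficients, so by linear algebra there exist maps $F^{m',m+1}_{j',j}$ realizing the cancellation \eqref{BasicLemma4}; uniqueness follows because the $\Vee^{m'}_{j'}$ appearing are, by the running construction, put together in direct sum (this is where one must check an independence statement — essentially property~\eqref{BasicLemmaItem3} guarantees the relevant leading coefficients are "new" directions). Then \emph{define} $P^{m+1}_j$ by \eqref{BasicLemma5}; a direct computation with $e^{t\tilde N}$, using \eqref{BasicLemma4} to kill the top-order term $t^{n^m_j}$, shows $\ord Q^{m+1}_j < n^m_j$, giving the strict decrease in \eqref{BasicLemmaItem2}; nonnegativity of $n^{m+1}_j$ holds because $P^{m+1}_j$ still has a nonzero constant term (inherited from $P^0_j = \id$ through the recursion — one tracks that the $t$-exponents $n^m_j - n^{m'}_{j'}$ are all $\ge 0$). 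When the order would drop below $0$, or when $\W_{j,m+1}=0$, we stop the inner induction and set $M_j = m$; we then move to level $j+1$ by taking $\W_{j+1,0}$ to be an orthogonal complement of $\W_j'$ in $\W$ (if nonzero) and restarting with $P^0_{j+1}=\id$.

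The main obstacle I expect is the bookkeeping in property~\eqref{BasicLemmaItem3}/\eqref{BasicLemmaItem4}: one must verify that the subspaces $\Vee^{m'}_{j'}=G^{m'}_{j'}(\W^{m'}_{j'})$ accumulated across all earlier $(j',m')$ actually sit in direct sum inside $\tilde\Sing$ (equivalently $\W_j'$), so that the pullback in \eqref{BasicLemma3} is well-behaved and the $F$'s in \eqref{BasicLemma4} are uniquely determined. This is where the \emph{orthogonality} of the decomposition and the strict monotonicity of the orders $n^m_j$ are both used — the orders record "how many powers of $\tilde N$" have been spent, and a leading coefficient produced at order $n$ cannot lie in the span of those produced at strictly larger orders. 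I would isolate this as a sub-claim proved alongside the induction: the map $\bigoplus_{j'\le j,\,m'} \W^{m'}_{j'}\ni (w^{m'}_{j'})\mapsto \sum G^{m'}_{j'}w^{m'}_{j'}$ is injective on the relevant sub-sum. Everything else — smoothness/polynomiality is automatic since we only ever multiply by $e^{t\tilde N}$ and monomials $t^{n}$, and the termination of both inductions is forced by $\dim\W<\infty$ and by the strict decrease of $n^m_j$ bounded below by $0$ — is routine once this independence is in hand.
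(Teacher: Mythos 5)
Your overall shape is right: the paper also proceeds by a double induction (outer on $j$, inner on $m$), defines $\W_{j,m+1}$ by pulling back the span of the earlier $\Vee^{m'}_{j'}$ under $G^m_j$, defines $P^{m+1}_j$ by \eqref{BasicLemma5}, and establishes the directness of $\sum\Vee^{m'}_{j'}$ as the linchpin that makes the $F$-maps well-defined and unique. But two essential ingredients are missing or wrong in your plan, and both are exactly where the difficulty sits.

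First, you never actually define the level spaces $\W_{j,0}$, and the definition you gesture at (``orthogonal complement of $\W'_j$ in $\W$'') is not what is needed. The paper sets $\set{s_j}_{j=0}^J = \set{\deg e^{t\tilde N}w : w \in \W\setminus 0}$ and defines $\W_{j,0} = \W\cap\ker\tilde N^{s_j+1}\cap(\text{earlier})^\perp$. The crucial consequence is that $\tilde N^{s_j}$ is \emph{injective} on $\W_{j,0}$: every nonzero $w\in\W_{j,0}$ has $e^{t\tilde N}w$ of degree exactly $s_j$. This fact is what drives the proof and your complement-of-$\W'_j$ does not deliver it (it would lump together vectors of different degrees).

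Second, your argument for nonnegativity of $n^{m+1}_j$ does not work, and the parenthetical claim that the $t$-exponents $n^m_j-n^{m'}_{j'}$ in \eqref{BasicLemma5} are all $\geq 0$ is false — for $j'=j$, $m'<m$ they are strictly negative because $\set{n^{m'}_j}$ is strictly decreasing. A nonzero constant term in $P^{m+1}_j$ does not by itself yield $\ord(e^{t\tilde N}P^{m+1}_j)\geq 0$: multiplying a Laurent polynomial by the polynomial $e^{t\tilde N}$ can, through cancellations, push the leading nonvanishing coefficient to arbitrarily negative degree. What the paper actually does is unwind the recursion \eqref{BasicLemma5} completely into the sum \eqref{mthRatbis} over increasing index-paths $\pmb\nu\in\mathcal P_m$, with the unique exponent-$0$ term coming from the maximal path $\pmb\nu_{\max}$ and corresponding to the inclusion $\W_{J,M}\hookrightarrow\W_{J,0}$. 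Applying $\tilde N^{s_J}$ to $\coeff_0(Q^M_J)$ kills everything except $\tilde N^{s_J}F^{\pmb\nu_{\max}}_J$, and then injectivity of $\tilde N^{s_J}$ on $\W_{J,0}$ shows $\coeff_0(Q^M_J)=0\Rightarrow\W_{J,M}=0$. This single mechanism gives nonnegativity (contrapositive: while $\W_{J,M}\neq0$, the order stays $\geq 0$), termination (the strictly decreasing nonnegative integer sequence forces $\W_{J,M}=0$ eventually), and the identification of $Q^M_J=0$ with $\W_{J,M}=0$. Without the $s_j$-based decomposition and this combinatorial path analysis, your plan cannot close the argument.
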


The lemma is a definition by induction if we adopt the convention that spaces with negative indices and summations where the upper index is less than the lower index are the zero space. In the inductive process that will constitute the proof of the lemma we will first define $\W_{j,m+1}\subset \W_{j,m}$ using \eqref{BasicLemma3} starting with suitably defined spaces $\W_{j,0}$ and then define $\W^m_j=\W_{j,m}\cap \W_{j,m+1}^\perp$. Note that the right hand side of \eqref{BasicLemma3} depends only on $\W_{j,m}$, $P^m_j$ (through $G^m_j$) and the spaces $\Vee^{m'}_{j'}$ with $j'<j$ and $m'$ arbitrary, or $j'=j$ and $m'<m$. The relation \eqref{BasicLemma4} follows from \eqref{BasicLemma3} and induction, and then \eqref{BasicLemma5} (where $P^m_j$ actually means its restriction to $\W_{j,m+1}$) is a definition by induction; it clearly gives that the $P^m_j(t)$ have values in $\W_j'$ as required in \eqref{BasicLemma0}.

\medskip

We will illustrate the lemma and its proof with an example and then give a proof.

\begin{example}
Suppose $\tilde \Sing$ is spanned by elements $e_{j,k}$ ($j=0,1$ and $k=1,\dotsc,K_j)$ and that the Hermitian inner product is defined so that these vectors are orthonormal. Define the linear operator $\tilde N:\tilde \Sing\to \tilde \Sing$ so that $\tilde N e_{j,1}=0$ and $\tilde N e_{j,k}=e_{j,k-1}$ for $1<k\leq K_j$. Thus $\tilde N^k e_{j,k}=0$ and $\tilde N^{k} e_{j,k+1}=e_{j,1}\ne 0$. Pick integers $0\leq s_0<s_1<\min\set{K_0,K_1}$, and let
\begin{equation*}
\W=\LinSpan\set{e_{0,s_0+1}, e_{1,s_1+1}, e_{0,s_1+1}+e_{1,s_1}}.
\end{equation*}
If $w\in \W$ and $w\ne 0$, then $e^{t\tilde N}w$ is a polynomial of degree exactly $s_0$ or $s_1$. Let $\W_{0,0}=\W\cap \ker \tilde N^{s_0+1}$, \ie,
\begin{equation*}
\W_{0,0}=\LinSpan\set{e_{0,s_0+1}}.
\end{equation*}
Then $e^{t\tilde N}w$ is  polynomial of degree $s_0$ if $w\in \W_{0,0}$. Let $\W_{1,0}=\W\cap \ker \tilde N^{s_1+1}\cap \W_{0,0}^{\perp}$. Thus
\begin{equation*}
\W_{1,0}=\LinSpan\set{e_{1,s_1+1},e_{0,s_1+1}+e_{1,s_1}}
\end{equation*}
and $e^{t\tilde N}w$ is  polynomial of degree exactly $s_1$ if $w\in \W_{1,0}$ and $w\ne 0$. With these spaces we have $\W=\W_{0,0}\oplus\W_{1,1}$ as an orthogonal sum. By \eqref{BasicLemmaItem1} of Lemma \ref{BasicLemma}, $P^0_0=\id_{\W_{0,0}}$. So $e^{t\tilde N}P^0_0$ is the restriction of
\begin{equation*}
e^{t\tilde N}=\sum_{k=0}^{s_0}\frac{t^k}{k!}\tilde N^k
\end{equation*}
to $\W_{0,0}$, $n^0_0=s_0$, and $G^0_0$ is $\frac{1}{s_0!}\tilde N^{s_0}$ restricted to $\W_{0,0}$. Thus $\Vee^0_0=\LinSpan\set{e_{0,1}}$. The space $\W_{0,1}$, defined using \eqref{BasicLemma3}, is the zero space by the convention on sums where the upper index is less than the lower index. Thus $M_0=0$. We next analyze what the lemma says when $j=1$. As when $j=0$, $P^0_1=\id_{\W_{1,0}}$, so $e^{t\tilde N}P^0_1$ is the restriction of 
\begin{equation*}
e^{t\tilde N}=\sum_{k=0}^{s_1}\frac{t^k}{k!}\tilde N^k
\end{equation*}
to $\W_{1,0}$. Hence $n^0_1=s_1$, and $G^0_1=\frac{1}{s_1!}\tilde N^{s_1}|_{\W_{1,0}}$. The preimage of $\Vee^0_0$ by $G^0_1$ is $\W_{1,1}=\LinSpan\set{e_{0,s_1+1}+e_{1,s_1}}$, and so $\W^0_1=\LinSpan\set{e_{1,s_1+1}}$ and $\Vee^0_1=\LinSpan\set{e_{1,1}}$. With $w=e_{0,s_1+1}+e_{1,s_1}$ we have 
\begin{equation*}
G^0_1w=\frac{1}{s_1!}e_{0,1}=G^0_0 \frac{s_0!}{s_1!}e_{0,s_0+1},
\end{equation*}
so with $F^{0,1}_{0,1}:\W_{1,1}\to \W^0_0$ defined by
\begin{equation*}
F^{0,1}_{0,1}w=-\frac{s_0!}{s_1!}e_{0,s_0+1}
\end{equation*}
we have $G^0_1+G^0_0F^{0,1}_{0,1}=0$. Formula \eqref{BasicLemma5} reads
\begin{equation*}
P^1_1(t) = \id_{\W_{1,1}}+t^{s_1-s_0}F^{0,1}_{0,1}
\end{equation*}
in this instance, and
\begin{equation*}
e^{t\tilde N} P^1_1(t) w= \sum_{k=0}^{s_1} \frac{t^k}{k!}\tilde N^k w - \frac{s_0!\,t^{s_1-s_0}}{s_1!} \sum_{k=0}^{s_0}\frac{t^k}{k!} \tilde N^k e_{0,s_0+1}.
\end{equation*}
In the first sum the highest order term is $t^{s_1}\!/s_1!\, e_{0,1}$, while in the second it is $t^{s_0}\!/s_0!\, e_{0,1}$. Taking into account the coefficient of the second sum we see that $e^{t\tilde N} P^1_1(t) w$ has order $<s_1$. A more detailed calculation gives that the order is $s_1-1$, and that the leading coefficient is given by the map
\begin{equation*}
w\mapsto \big( \frac{1}{(s_1-1)!}-\frac{s_0!}{s_1!(s_0-1)!}\big)e_{0,2} + e_{1,1};
\end{equation*}
its image spans $\Vee^1_1$. Note that $\Vee^0_0+\Vee_1^0+\Vee^1_1$ is a direct sum and is invariant under $\tilde N$.
\end{example}

\begin{proof}[Proof of Lemma \ref{BasicLemma}]
We note first that the properties of the objects in the lemma are such that 
\begin{equation}\label{DirectSum}
D_{\mu,\infty}=\sum_{j=0}^J\sum_{m=0}^{M_{j}}\Vee^{m}_{j}
\end{equation}
is a direct sum. Indeed, suppose we have $w^m_{j}\in \W^m_j$, $j=0,\dotsc,J$, $m=0,\dotsc,M_j$ such that
\begin{equation*}
\sum_{j=0}^J\sum_{m=0}^{M_{j}}G^{m}_{j}w^m_{j}=0.
\end{equation*}
If some $w^m_j\ne 0$, let 
\begin{equation*}
j_0=\max\set{j:\exists m \text{ s.t. } w^m_j\ne 0},\quad m_0=\max\set{m:w^m_{j_0}\ne 0},
\end{equation*}
so that $w^{m_0}_{j_0}\ne 0$. Thus 
\begin{equation*}
G^{m_0}_{j_0}w^{m_0}_{j_0} = -\sum_{j=0}^{j_0-1}\sum_{m=0}^{M_{j}}G^{m}_{j}w^{m}_{j} - \sum_{m=0}^{m_0-1} G^{m}_{j_0}w^{m}_{j_0}\in \sum_{j=0}^{j_0-1}\sum_{m=0}^{M_{j}}\Vee ^{m}_{j} - \sum_{m=0}^{m_0-1} \Vee^{m}_{j_0},
\end{equation*}
therefore $w^{m_0}_{j_0}\in \W_{j_0,m_0+1}$ by \eqref{BasicLemma3}. But also $w^{m_0}_{j_0}\in \W^{m_0}_{j_0}$, a space which by definition is orthogonal to $\W_{j_0,m_0+1}$. Consequently $w^{m_0}_{j_0}=0$, a contradiction. It follows that \eqref{DirectSum} is a direct sum as claimed, and in particular that the maps
\begin{equation*}
G^m_j|_{\W^m_j}:\W^m_j\to \Vee^m_j
\end{equation*}
are isomorphisms.

Note that $e^{t \tilde N}w$ is a nonzero polynomial whenever $w\in \W\minus 0$ and let
\begin{equation*}
\set{s_j}_{j=0}^J=\set{\deg e^{t \tilde N}w:w\in \W,\ w\ne 0}
\end{equation*}
be an enumeration of the degrees of these polynomials, in increasing order. Let $\W_{-1,0}=\set{0}\subset \W$ and inductively define
\begin{equation*}
\W_{j,0}=\W\cap \ker \tilde N^{s_j+1}\cap \W_{j-1,0}^\perp, \quad j=0,\dotsc, J.
\end{equation*}
Thus $\W_{j,0}\subset \W$, $\W=\bigoplus_{j=0}^J \W_{j,0}$ is an orthogonal decomposition of $\W$,
\begin{equation*}
\tilde N^{s_j}|_{\W_{j,0}}:\W_{j,0}\to \tilde \Sing
\end{equation*}
is injective for $j=0,\dotsc,J$, and if $w\in \W_{j,0}\minus 0$ then $e^{t\tilde N}w$ is a polynomial of degree exactly $s_j$. The spaces $\W^m_j$ will be defined so that $\bigoplus_m\W^m_j = \W_{j,0}$.

Let $P^0_0(t)=\id_{\W_{0,0}}$, let $Q^0_0(t)=e^{t\tilde N}P^0_0(t)$. Then  $\ord (Q^0_0)=s_0$ and
\begin{equation*}
G^0_0=1/s_0!\,\tilde N^{s_0}|_{\W_{0,0}}.
\end{equation*}
By \eqref{BasicLemma3}, $\W_{0,1}$ is the preimage of the zero vector space. Since $\tilde N^{s_0}$ is injective on $\W_{0,0}$, $\W_{0,1}=0$, $\W^0_0=\W_{0,0}$ and $M_0=0$. Let $\Vee^0_0=G^0_0(\W^0_0)$. This proves the lemma if $J=0$.

We continue the proof using induction on $J$. Suppose that $J\geq 1$ and that the lemma has been proved for $\W'=\bigoplus_{j=0}^{J-1}\W_{j,0}$, so we have all objects described in the statement of the lemma, for $\W'$. The corresponding objects for $\W_{J,0}$ are then defined by induction in the second index, as follows.

First, let $P^0_J(t)=\id_{\W_{J,0}}$, $Q^0_J=e^{t\tilde N}P^0_J$ (a polynomial in $t$ of degree $n^0_J=s_J$) and $G^0_J=\coeff_{s_J}(Q^0_J)$.

Next, suppose we have found $\W_{J,0}\supset\dots\supset\W_{J,M-1}$ and 
\begin{equation*}
P^{m}_j\in L(\W_{j,m},\W)[t,t^{-1}]
\end{equation*}
so that the properties described in the lemma are satisfied for $j<J$ and all $m$, or $j=J$ and $m\leq M-1$. As  discussed already it follows that 
\begin{equation*}
\sum_{m=0}^{M-2} \Vee^m_J + \sum_{j=0}^{J-1} \sum_{m=0}^{M_j}\Vee^m_j
\end{equation*}
is a direct sum and that the maps
\begin{equation}\label{RestrictedGmj}
G^m_j|_{\W^m_j}:\W^m_j\to \Vee^m_j
\end{equation}
defined so far are isomorphisms. Suppose further that the $n^m_J=\ord(Q^m_J)$, $m=0,\dotsc,M-1$ are nonnegative and strictly decrease as $m$ increases. In agreement with \eqref{BasicLemma3}, let
\begin{equation*}
\W_{J,M}=(G^{M-1}_J)^{-1} (\sum_{m=0}^{M-2} \Vee^m_J + \sum_{j=0}^{J-1} \sum_{m=0}^{M_j}\Vee^m_j),
\end{equation*}
a subspace of the domain $\W_{J,M-1}$ of $G^{M-1}_J$. Define $\W^{M-1}_J=\W_{J,M-1}\cap \W_{J,M}^\perp$. If $w\in \W_{J,M}$, then
\begin{equation*}
G^{M-1}_J w=\sum_{m=0}^{M-2} v^m_J + \sum_{j=0}^{J-1} \sum_{m=0}^{M_j}v^m_j
\end{equation*}
uniquely with $v^m_j\in \Vee^m_j$. Since the maps \eqref{RestrictedGmj} are isomorphisms, there are unique maps $F^{m,M}_{j,J}:\W_{J,M}\to \W^m_j$, $j=0,\dotsc,J-1$ and $m=0,\dotsc,M_j$, or $j=J$ and $m=0,\dotsc,M-2$ such that 
\begin{equation*}
G^{M-1}_J+\sum_{m=0}^{M-2} G^m_JF^{m,M}_{J,J} + \sum_{j=0}^{J-1} \sum_{m=0}^{M_j}G^m_jF^{m,M}_{j,J}=0
\end{equation*}
on $\W_{J,M}$, that is, \eqref{BasicLemma4} holds. Define
\begin{equation*}
P^M_J=P^{M-1}_J+\sum_{m=0}^{M-2} t^{n^{M-1}_J-n^m_J}P^m_J F^{m,M}_{J,J} + \sum_{j=0}^{J-1} \sum_{m=0}^{M_j}t^{n^{M-1}_J-n^m_{j}}P^m_jF^{m,M}_{j,J}
\end{equation*}
so \eqref{BasicLemma5} holds. Let $Q^M_J=e^{t\tilde N}P^M_J$. Because of \eqref{BasicLemma2}, each term on the right in
\begin{equation*}
Q^M_J=Q^{M-1}_J+\sum_{m=0}^{M-2} t^{n^{M-1}_J-n^m_J}Q^m_J F^{m,M}_{J,J} + \sum_{j=0}^{J-1} \sum_{m=0}^{M_j}t^{n^{M-1}_J-n^m_{j}}Q^m_jF^{m,M}_{j,J}.
\end{equation*}
has order $n^{M-1}_J$, so $\coeff_n(Q^M_J)=0$ if $n\geq n^{M-1}_J$. If $Q^M_J\ne 0$, let $n^M_J=\ord(Q^M_J)$. {\it A fortiori} $n^M_J<n^{M-1}_J$.

We now show that if $Q^M_J=0$, then $\W_{J,M}=0$, so $M_J=M-1$ and the inductive construction stops.

Let $F^{m,m+1}_{j,j}:\W_{j,m+1}\to \W_{j,m}$ be the inclusion map. Note that the combination of indices just used does not appear in \eqref{BasicLemma4}: these maps are not defined in the statement of the lemma. With this notation 
\begin{equation}\label{mthRat}
P^m_J=\sum_{m'=0}^{m-1} t^{n^{m-1}_J-n^{m'}_J}P^{m'}_JF^{m',m}_{J,J} + \tilde H^m_J
\end{equation}
for $m=1,\dotsc,M$ and some $\tilde H^m_J\in L(\W_{J,m},\W')[t,t^{-1}]$. Let $\mathcal P_m$ be the set of finite strictly increasing sequences $\pmb \nu=(\nu_0,\nu_1,\dotsc,\nu_k)$ of elements of $\set{0,\dotsc,m}$ with $\nu_0=0$ and $\nu_k=m$. For $\nu=(\nu_0,\dotsc,\nu_k)\in \mathcal P_m$ ($m\geq 1$) define
\begin{gather*}
F^{\pmb \nu}_J=F^{\nu_0,\nu_1}_{J,J}\circ \dots \circ F^{\nu_{m-1},\nu_m}_{J,J}\\
n^{\pmb \nu}_J=(n^{\nu_1-1}_J-n^{\nu_0}_J)+(n^{\nu_2-1}_J-n^{\nu_1}_J)+\dots + (n^{\nu_k-1}_J-n^{\nu_{k-1}}_J).
\end{gather*}
Since the $n^{m'}_J$ strictly decrease as $m'$ increases, the numbers $n^{\pmb \nu}_J$ are strictly negative except when $\pmb\nu$ is the maximal sequence $\pmb \nu_{\max}$ in $\set{0,\dotsc,m}$, in which case $n^{\pmb\nu_{\max}}_J=0$ and $F^{\pmb\nu_{\max}}$ is the inclusion of $\W_{J,m}$ in $\W_{J,0}$. It is not hard to prove (by induction on $m$, using \eqref{mthRat}) that
\begin{equation}\label{mthRatbis}
P^m_J=P^0_J\sum_{\pmb \nu \in \mathcal P_m} t^{n^{\pmb \nu}_J} F^{\pmb \nu}_J +H^m_J
\end{equation}
for all $m\geq 1$ where $H^m_J\in L(\W_{J,m},\W')[t,t^{-1}]$. If $Q^M_J=0$, then $P^M_J=0$, so, since $\tilde N^{s_J} H^M_J=0$, 
\begin{equation*}
\tilde N^{s_J}P^M_J=\sum_{\pmb \nu \in \mathcal P_M} t^{n^{\pmb \nu}_J} \tilde N^{s_J}F^{\pmb \nu}_J=0
\end{equation*}
In particular, $\tilde N^{s_J} F^{\pmb \nu_{\max}}_J=\coeff_{0}(\tilde N^{s_J}P^M_J)=0$. Since $\tilde N^{s_J}$ is injective on $\W_{J,0}$, we conclude that the inclusion of $\W_{J,M}$ in $\W_{J,0}$ is zero. This means that $\W_{J,M}=0$, so the inductive construction stops with $M_J=M-1$. 

We will now show that there is a finite $M$ such that $Q^M_J=0$. The inductive construction gives, as long as $Q^m_J\ne 0$, the numbers $n^m_J=\ord(Q^m_J)$ which form a strictly decreasing sequence in $m$, with $n^0_J=s_J$. Suppose $n^{M-1}_J\geq 0$, $Q^M_J\ne 0$, and $n^M_J<0$. In particular, the coefficient of $t^0$ in $Q^M_J$ vanishes. Using \eqref{mthRatbis} with $m=M$ we have
\begin{equation*}
e^{t\tilde N} P^M_J = \sum_{\pmb \nu \in \mathcal P_M}\sum_{s=0}^{s_J}\frac{t^{s+n^{\pmb \nu}_J}}{s!} \tilde N^sF^{\pmb \nu}_J +e^{t\tilde N}H^M_J
\end{equation*}
The coefficient of $t^0$ is
\begin{equation*}
\coeff_0(\e^{t\tilde N} P^M_J) = \sum_{\pmb \nu \in \mathcal P_M}\frac{1}{(-n^{\pmb \nu}_J)!} \tilde N^{-n^{\pmb \nu}_J}F^{\pmb \nu}_J +\coeff_0(e^{t\tilde N}H^M_J);
\end{equation*}
recall that $n^{\pmb \nu}_J\leq 0$. Since $H^M_J$ maps into $\W'$, $\tilde N^{s_J}\coeff_0(H^M_J)=0$, and since $\tilde N^{s}|_{\W_{J,0}}=0$ if $s>s_J$, $\tilde N ^{s_J}\tilde N^{-n^{\pmb \nu}_J}=0$ if $n^{\pmb \nu}_J\ne 0$. Thus
\begin{equation*}
\tilde N^{s_J}\coeff_0(\e^{t\tilde N} P^M_J) =  \tilde N^{s_J}F^{\pmb \nu_{\max}}_J
\end{equation*}
where $\pmb \nu_{\max}=(0,1,\dotsc,M)$. Since $c_0(e^{t\tilde N}P^M_J)=0$ by hypothesis, since $F^{\pmb\nu_{\max}}_J$ is the inclusion of $\W_{J,M}$ in $\W_{J,0}$, and since $\tilde N^{s_J}$ is injective on $\W_{J,0}$, $\W_{J,M}=0$.
\end{proof}

\begin{proof}[Proof of Lemma \ref{ExistenceOfShadowLemma}]
Apply Lemma \ref{BasicLemma} to each of the spaces $\W_{\mu_\ell} = \tilde \pi_{\mu_\ell}D_{\mu_\ell}$. The corresponding objects are labeled adjoining $\ell$ as a subindex. Get in particular, decompositions
\begin{equation*}
\tilde \pi_{\mu_\ell}D_{\mu_\ell} = \bigoplus_{j=0}^{J_\ell}\bigoplus_{m=0}^{M_{j,\ell}}\W^m_{j,\ell}\subset \tilde\Sing_{\mu_\ell}
\end{equation*}
for each $\ell$, and operators $G^m_{j,\ell}:\W^m_{j,\ell}\to \Vee^m_{j,\ell}\subset \tilde \Sing_{\mu_\ell}$ such that
\begin{equation*}
\bigoplus_{j=0}^{J_\ell}\bigoplus_{m=0}^{M_{j,\ell}} G^m_{j,\ell}|_{\W^m_{j,\ell}}: \bigoplus_{j=0}^{J_\ell}\bigoplus_{m=0}^{M_{j,\ell}}\W^m_{j,\ell}\to  D_{\mu_\ell,\infty}=\bigoplus_{j=0}^{J_\ell}\bigoplus_{m=0}^{M_{j,\ell}} \Vee^m_{j,\ell} 
\end{equation*}
is an isomorphism. Let $d^m_{j,\ell}=\dim \W^m_{j,\ell}$ and pick a basis
\begin{equation*}
w^m_{j,\ell.k},\quad  1\leq k\leq d^m_{j,\ell}
\end{equation*}
of $\W^m_{j,\ell}$, $j=0,\dotsc,J_\ell$, $m=0,\dotsc,M_{j,\ell}$. Then $\tilde p^m_{j,\ell,k}(t)=t^{-n^m_{j,\ell}}P^m_{j,\ell}(t)w^m_{j,\ell,k}\in \W_{\mu_\ell}$. These elements
\begin{equation*}
\tilde p^m_{j,\ell,k}\in \W_{\mu_\ell}[t,t^{-1}], \quad j=0,\dotsc,J_\ell,\ m=0,\dotsc,M_{j,\ell},\ \ell=1,\dotsc d^m_{j,\ell}
\end{equation*}
are the ones Lemma \ref{ExistenceOfShadowLemma} claims exist. Indeed, since $Q^m_{j,\ell}(t)=e^{t\tilde N_{\mu_\ell}}P^m_{j,\ell}(t)$,
\begin{equation*}
\lim_{\substack{\Re t\to \infty\\t\in \Strip_\theta}}  e^{t\tilde N_{\mu_\ell}} t^{-n^m_{j,\ell}} P^m_{j,\ell}(t) w^m_{j,\ell,k} = G^m_{j,\ell}w^m_{j,\ell,l}.
\end{equation*}
Since the $G^m_{j,\ell}w^m_{j,\ell,k}$ form a basis of $D_{\mu,\infty}$, the  $t^{-n^m_{j,\ell}}P^m_{j,\ell}(t)w^m_{j,\ell,k}$, form a basis of $\W_{\mu_\ell}$ for all $t\in \Strip_\theta$ with large enough real part.
\end{proof}

\section{Asymptotics of the projection}
\label{sec:AsymptoticsOfProjection}

With the setup and (slightly changed) notation leading to and in the proof of Proposition \ref{ExistenceOfShadow}, given a subspace $D\subset \Sing$ and the linear map $\a:\Sing\to\Sing$ we have, for fixed $\theta\geq 0$ and $t\in \Strip_\theta=\smallset{t\in \C: |\Im t|\leq \theta}$, that
\begin{equation*}
e^{t\a}D = \LinSpan \set{v_k(t)},\quad \Re t\gg 0
\end{equation*}
with
\begin{equation}\label{TheVkAgain}
v_k(t)=e^{t\a'} g_k(t) +\sum_{\substack{\lambda\in \spec \a\\\Re\lambda<\mu_k}} e^{t(\lambda -\mu_k)}\hat p_{k,\lambda}(t).
\end{equation}
The $g_k(t)$ are polynomials in $1/t$ with values in $\tilde\Sing_{\mu_k}$, the collection of vectors
\begin{equation*}
g_{\infty,k}=\lim_{t\to\infty}g_k(t)
\end{equation*}
is a basis of $D_\infty$, the $\mu_k$ form a finite sequence, possibly with repetitions, of elements in the set $\set{\Re\lambda :\lambda\in \spec\a}$, and
\begin{equation*}
\hat p_{k,\lambda}(t)=e^{tN_\lambda}\pi_\lambda p_k(t)
\end{equation*}
where the $p(t)$ are polynomials in $t$ and $1/t$ with values in $\Sing$. The additive semigroup $\mathfrak S_\a\subset \C$ (possibly without identity) generated by the set \eqref{GeneratorOfSemiGroup-a} is a subset of $\set{\lambda\in \C:\Re \lambda\leq 0}$ and has the property that $\smallset{\vartheta\in \mathfrak S_\a:\Re \vartheta> \mu}$ is finite for every $\mu\in \R$.

\begin{proposition}\label{AsymptOfProjA}
Let $K\in \Gr_{d'}(\Sing)$ be complementary to $D$, suppose that
\begin{equation}\label{HypA}
\mathscr V_K\cap \Omega^+_\theta(D) = \emptyset.
\end{equation}
Then there are polynomials $p_{\vartheta}(z^1,\dotsc,z^N,t)$ with values in $\End(\Sing)$ and $\C$-valued polynomials $q_{\vartheta}(z^1,\dotsc,z^N,t)$ such that 
\begin{equation}\label{qBddBelowA}
\exists\, C,\ R_0>0 \text{ such that }|q_\vartheta(e^{\im t\Im\lambda_1},\dotsc,e^{\im t\Im\lambda_N},t)|>C\text{ if }t\in \Strip_\theta,\ \Re t>R_0
\end{equation}
and such that
\begin{equation*}
\pi_{e^{t\a}D,K} = \sum_{\vartheta\in \mathfrak S_\a} \frac{e^{t\vartheta}p_\vartheta(e^{\im t\Im\lambda_1},\dotsc,e^{\im t\Im\lambda_N},t)}{q_{\vartheta}(e^{\im t\Im\lambda_1},\dotsc,e^{\im t\Im\lambda_N},t)},\quad t\in \Strip_\theta,\ \Re t>R_0
\end{equation*}
with uniform convergence in norm in the indicated subset of $\Strip_\theta$.
\end{proposition}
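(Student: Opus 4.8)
The plan is to compute $\pi_{e^{t\a}D,K}$ explicitly from a frame of $e^{t\a}D$ and a frame of a complement, and then to read off the claimed rational structure. First I would assemble the basis $\set{v_k(t)}$ of $e^{t\a}D$ given by \eqref{TheVkAgain}, together with a fixed basis $\set{w_1,\dots,w_{d'}}$ of $K$ (which is constant in $t$). For $\Re t$ large these together form a basis of $\Sing$ by hypothesis \eqref{HypA} (via Proposition~\ref{ExistenceOfShadow}, $e^{t\a}D$ stays away from $\mathscr V_K$, so $e^{t\a}D\oplus K=\Sing$ for $\Re t\gg 0$, uniformly in $\Strip_\theta$). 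Let $V(t)$ be the $(\dim\Sing)\times(\dim\Sing)$ matrix whose columns are $v_1(t),\dots,v_{d''}(t),w_1,\dots,w_{d'}$ expressed in a fixed basis of $\Sing$. Then $\pi_{e^{t\a}D,K}=V(t)\,E\,V(t)^{-1}$, where $E$ is the constant diagonal idempotent projecting onto the first $d''$ coordinates; equivalently, by Cramer's rule, every entry of $\pi_{e^{t\a}D,K}$ is a ratio whose denominator is $\det V(t)$ and whose numerator is a sum of cofactors of $V(t)$, each itself a polynomial in the entries of $V(t)$.

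Next I would analyze the entries of $V(t)$. By \eqref{TheVkAgain}, each $v_k(t)$ is a finite $\C$-linear combination $\sum_{\vartheta} e^{t\vartheta}(\text{polynomial in }t)$ where the exponents $\vartheta$ range over elements of the form $\im\Im\mu_k$ and $\lambda-\mu_k$ with $\Re\lambda<\mu_k$, all of which lie in the additive semigroup generated by \eqref{GeneratorOfSemiGroup-a}; in particular every such $\vartheta$ has $\Re\vartheta\le 0$, and only finitely many have $\Re\vartheta$ above any given bound. Writing each $\vartheta=\Re\vartheta+\im\Im\vartheta$ and observing that the imaginary parts of these exponents are $\Z$-linear combinations of $\Im\lambda_1,\dots,\Im\lambda_N$ (the imaginary parts of the spectrum of $\a$), the entries of $V(t)$ are of the form $\sum_{\mu\le 0} e^{t\mu}\,P_\mu(e^{\im t\Im\lambda_1},\dots,e^{\im t\Im\lambda_N},t)$ with $P_\mu$ polynomials. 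Hence $\det V(t)$ and each cofactor are of the same form. The key structural point is that multiplying such expressions keeps the real parts of the exponents in the semigroup $\mathfrak S_\a$, so products and sums stay in the class; and extracting the term of largest real part in $\det V(t)$ isolates a ``principal'' exponent.

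Then I would normalize: let $\mu_0=\max\set{\Re\vartheta:\vartheta \text{ appears in }\det V(t)\text{ with nonzero coefficient-polynomial}}$. Factor $e^{t\mu_0}$ out of $\det V(t)$; the remaining factor is $q(e^{\im t\Im\lambda_1},\dots,e^{\im t\Im\lambda_N},t)$ for a polynomial $q$ with $q(z,t)=\sum_{\mu\le 0} e^{t\mu}(\cdots)$, whose top piece (the $\mu=0$ part) is a nonzero trigonometric polynomial in $t$ times powers of $t$. The genuine obstacle is to show this denominator is bounded away from zero for $\Re t\gg 0$ in $\Strip_\theta$, i.e. \eqref{qBddBelowA}: this does not follow from $\det V(t)\ne 0$ alone, since a priori $\det V(t)$ could decay. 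Here is where Proposition~\ref{ExistenceOfShadow} does real work: along any sequence $t_\nu$ with $\Re t_\nu\to\infty$ in $\Strip_\theta$ one has, after passing to a subsequence, $e^{t_\nu\a}D\to D'\in\Omega^+_\theta(D)$, and by \eqref{HypA} $D'\notin\mathscr V_K$, so the angle between $e^{t_\nu\a}D$ and $K$ is bounded below; combined with uniform control of the norms of the normalized frame vectors $e^{-t\mu_k}v_k(t)$ (which converge to $g_{\infty,k}$, a basis of $D_\infty$, hence stay bounded and ``nondegenerate''), this forces the normalized determinant — exactly $q(e^{\im t\Im\lambda},t)$ up to a bounded factor — to stay bounded below. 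Finally, dividing numerator and denominator by $e^{t\mu_0}$ and collecting, for each $\vartheta\in\mathfrak S_\a$, all numerator terms with real part $\Re\vartheta$, one obtains the asserted series $\sum_{\vartheta\in\mathfrak S_\a} e^{t\vartheta}p_\vartheta(\cdots)/q_\vartheta(\cdots)$; convergence in norm, uniformly for $\Re t>R_0$ in $\Strip_\theta$, follows because only finitely many $\vartheta$ have $\Re\vartheta>-\mu$ for each $\mu>0$ and the numerator coefficients grow at most polynomially in $t$ while $e^{t\vartheta}$ decays like $e^{\Re t\,\Re\vartheta}$, with the common denominator uniformly bounded below. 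The division step should be phrased as a formal rearrangement of the single rational expression $V(t)EV(t)^{-1}$ into its homogeneous components in the $e^{t\vartheta}$, which is legitimate precisely because the denominator is a single nonvanishing factor.
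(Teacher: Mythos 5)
Your overall strategy is essentially the one in the paper: express $e^{t\a}D$ via the frame \eqref{TheVkAgain}, pair it with a fixed basis of $K$, form a transition matrix, and use the hypothesis \eqref{HypA} together with Proposition~\ref{ExistenceOfShadow} to bound the relevant determinant away from zero. The paper works with the triangular decomposition
$\begin{bmatrix}\mathbf v(t)&\mathbf u\end{bmatrix}=\begin{bmatrix}\mathbf g&\mathbf u\end{bmatrix}\begin{bmatrix}\alpha(t)&0\\\beta(t)&\id\end{bmatrix}$
relative to the bases $\mathbf g$ of $D_\infty$ and $\mathbf u$ of $K$, so that only the $d''\times d''$ block $\alpha(t)$ needs to be inverted and the projection is simply $\phi\mapsto(\mathbf g+\mathbf u\cdot\beta(t)\alpha(t)^{-1})\varphi^1$; your $V(t)EV(t)^{-1}$ with Cramer's rule on the full $\dim\Sing\times\dim\Sing$ matrix is a heavier-handed route to the same object, but it could in principle be pushed through.

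The genuine gap is at the end, in how you produce the stated form of the expansion. You write that, after factoring $e^{t\mu_0}$ out of $\det V(t)$, ``the remaining factor is $q(e^{\im t\Im\lambda_1},\dots,e^{\im t\Im\lambda_N},t)$ for a polynomial $q$ with $q(z,t)=\sum_{\mu\leq 0}e^{t\mu}(\cdots)$.'' This is internally contradictory: a sum $\sum_{\mu\leq 0}e^{t\mu}(\text{polynomial in }z,t)$ with nonzero terms at some $\mu<0$ is \emph{not} a polynomial in $z^1,\dots,z^N,t$, since the factors $e^{t\mu}$ for $\mu<0$ cannot be expressed through those variables. The statement of the proposition requires each $q_\vartheta$ to be a genuine polynomial in $z^1,\dots,z^N,t$ alone. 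To get from your single rational expression to the asserted series indexed by $\mathfrak S_\a$, you must split the denominator into a ``principal part'' $q_0$ (the $\Re\vartheta=0$ terms, a true polynomial in $z,t$) plus an exponentially decaying remainder $\tilde q$, and expand $1/(q_0+\tilde q)=(1/q_0)\sum_{\ell\geq 0}(-\tilde q/q_0)^\ell$. It is this Neumann/geometric series expansion that pushes the decaying exponentials out of the denominator and distributes them over the terms $e^{t\vartheta}$ in the series — and it is precisely here that the lower bound \eqref{qBddBelowA} on the principal part $q_0$ (not merely on the full determinant) is needed for convergence. The paper isolates this step explicitly via the decomposition $\alpha=\alpha^{(0)}+\tilde\alpha$ and the expansion $\beta\alpha^{-1}=\beta(\alpha^{(0)})^{-1}\sum_\ell(-1)^\ell[\tilde\alpha(\alpha^{(0)})^{-1}]^\ell$; this is the one piece of machinery your sketch cannot replace by a ``formal rearrangement.'' (The compactness argument you invoke to bound the determinant below is on the right track, but note you must argue for the \emph{principal part} $q_0$, which the paper does by combining the bound on $\det\alpha$ with the exponential decay of $\tilde\alpha$.)
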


\begin{proof}
Let $K\subset \Sing$ be complementary to $D$ as indicated in the statement of the proposition, let $\mathbf u=[u_1,\dotsc,u_{d'}]$ be an ordered basis of $K$. Write $\mathbf g$ for an ordering of the basis $\set{g_{\infty,k}}$ of $D_\infty$. With the $v_k(t)$ ordered as the $g_{\infty,k}$ to form $\mathbf v(t)$, we have
\begin{equation}\label{Bases}
\begin{bmatrix} \mathbf v(t) & \mathbf u\end{bmatrix}
=\begin{bmatrix} \mathbf g & \mathbf u\end{bmatrix} \cdot
\begin{bmatrix} \alpha(t)&0 \\\beta(t) &\id\end{bmatrix}
\end{equation}
where
\begin{equation}\label{AlphaAndBeta}
\alpha(t) = \sum_k\sum_{\substack{\lambda\in \spec a\\\Re\lambda\leq\mu_k}}e^{t(\lambda-\mu_k)}\alpha_{k,\lambda}(t),\qquad
\beta(t) = \sum_k\sum_{\substack{\lambda\in \spec a\\\Re\lambda\leq\mu_k}}e^{t(\lambda-\mu_k)}\beta_{k,\lambda}(t).
\end{equation}
The entries of the matrices $\alpha_{k,\lambda}(t)$ and $\beta_{k,\lambda}(t)$ are both polynomials in $t$ and $1/t$, but only in $1/t$ if $\Re\lambda=\mu_k$.  Define
\begin{equation}\label{alpha0andtilde}
\alpha^{(0)}(t)=\sum_k\sum_{\substack{\lambda\in \spec a\\\Re\lambda=\mu_k}}e^{t(\lambda-\mu_k)}\alpha_{k,\lambda}(t),\qquad
\tilde \alpha(t) = \sum_k\sum_{\substack{\lambda\in \spec a\\\Re\lambda<\mu_k}}e^{t(\lambda-\mu_k)} \alpha_{k,\lambda}(t),
\end{equation}
likewise $\beta^{(0)}(t)$ and $\tilde \beta(t)$. Note that $\tilde \alpha(t)$ and $\tilde \beta(t)$ decrease exponentially as $\Re t\to\infty$ with $|\Im t|$ bounded.

The hypothesis \eqref{HypA} gives that
\begin{equation*}
\begin{bmatrix} \alpha(t)&0 \\\beta(t) &\id\end{bmatrix}
\end{equation*}
is invertible for every sufficiently large $\Re t$, so $\alpha(t)$ is invertible for such $t$. In fact,
\begin{equation}\label{DetBddBelow}
\text{there are } C,\ R_0>0 \text{ such that }|\det(\alpha(t))|>C\text{ if }t\in \Strip_\theta,\ \Re t>R_0.
\end{equation}
For suppose this is not the case. Then there is a sequence $\set{t_\nu}$ in $\Strip_\theta$ with $\Re t_\nu\to \infty$ as $\nu\to\infty$ such that $\det \alpha(t_\nu)\to 0$. Since both $\alpha(t_\nu)$ and $\beta(t_\nu)$ are bounded, we may assume, passing to a subsequence, that they converge. It follows that $e^{t_\nu a}D$ converges, by definition, to an element $D'\in \Omega^+_\theta(D)$. Also the matrix in \eqref{Bases} converges. The vanishing of the determinant of the limiting matrix implies that $K\cap D'\ne \set{0}$, contradicting \eqref{HypA}. Thus \eqref{DetBddBelow} holds.

If $\phi\in \Sing$ then of course
\begin{equation*}
\phi = \begin{bmatrix} \mathbf g& \mathbf u \end{bmatrix} \cdot \begin{bmatrix} \varphi^1\\ \varphi^2 \end{bmatrix}
\end{equation*}
where the $\varphi^i$ are columns of scalars. Substituting
\begin{equation*}
\begin{bmatrix}\mathbf g & \mathbf u\end{bmatrix} = \begin{bmatrix}\mathbf v(t) & \mathbf u\end{bmatrix} \cdot \begin{bmatrix} \alpha(t)^{-1}&0 \\ -\beta(t)\alpha(t)^{-1} &\id\end{bmatrix}
\end{equation*}
gives
\begin{equation*}
\phi = \begin{bmatrix}\mathbf v(t) & \mathbf u\end{bmatrix} \cdot \begin{bmatrix} \alpha(t)^{-1}&0 \\ -\beta(t)\alpha(t)^{-1} &\id\end{bmatrix} \begin{bmatrix} \varphi^1\\ \varphi^2 \end{bmatrix} = \begin{bmatrix}\mathbf v(t) & \mathbf u\end{bmatrix} \cdot \begin{bmatrix} \alpha(t)^{-1}\varphi^1\\ -\beta(t)\alpha(t)^{-1}\varphi^1 +\varphi^2\end{bmatrix},
\end{equation*}
hence
\begin{equation*}
\phi = \mathbf v(t) \cdot \alpha(t)^{-1}\varphi^1+ 
\mathbf u  \cdot\big(-\beta(t)\alpha(t)^{-1}\varphi^1+\varphi^2\big);
\end{equation*}
This is the decomposition of $\phi$ according to $\Sing=e^{t\a}D\oplus K$, therefore
\begin{equation*}
\pi_{e^{t\a}D,K}\phi = 
\mathbf v(t) \cdot \alpha(t)^{-1}\varphi^1.
\end{equation*}
Replacing $\mathbf v(t) = \mathbf g \cdot \alpha(t)+\mathbf u \cdot \beta(t)$ we obtain
\begin{equation}\label{ProjInCoords}
\begin{aligned}
\pi_{e^{t\a}D,K}\phi &= 
\big(\mathbf g \cdot \alpha(t)+\mathbf u \cdot \beta(t)\big) \alpha(t)^{-1}\varphi^1 \\&= \big(\mathbf g+\mathbf u \cdot \beta(t)\alpha(t)^{-1}\big)\varphi^1.
\end{aligned}
\end{equation}
The matrix $\alpha^{(0)}(t)$ is invertible because of \eqref{DetBddBelow} and the decomposition $\alpha(t)=\alpha^{(0)}(t)+\tilde \alpha(t)$, so
\begin{equation}\label{betaalphaInverse}
\begin{aligned}
\beta(t)\alpha(t)^{-1}
&=\beta(t)\alpha^{(0)}(t)^{-1}\big(\id + \tilde\alpha(t)\alpha^{(0)}(t)^{-1}\big)^{-1}\\
&=\beta(t)\alpha^{(0)}(t)^{-1}\sum_{\ell=0}^\infty(-1)^\ell [\tilde\alpha(t)\alpha^{(0)}(t)^{-1}]^\ell.
\end{aligned}
\end{equation}
The series converges absolutely and uniformly in $\set{t\in \Strip_\theta:\Re t > R_0}$ for some real $R_0\in\R$. The entries of $\alpha^{(0)}(t)$ are expressions
\begin{equation*}
\sum_{\lambda\in \spec\a}e^{\im t\Im\lambda}\sum _{\nu=0}^N  c_{\lambda,\nu}t^{-\nu},
\end{equation*}
hence
\begin{equation*}
\det\alpha^{(0)}(t)=q(e^{\im t\Im\lambda_1},\dotsc,e^{\im t\Im\lambda_N},1/t)
\end{equation*}
for some polynomial $q(z^1,\dotsc,z^N,1/t)$. Note that because of \eqref{DetBddBelow},
\begin{equation}\label{Det0BddBelow}
\text{there are } C,\ R_0>0 \text{ such that }|\det(\alpha^0(t))|>C\text{ if }t\in \Strip_\theta,\ \Re t>R_0.
\end{equation}
Since $\alpha^{(0)}(t)^{-1}=(\det\alpha^{(0)}(t))^{-1}\Delta(t)^{\dag}$ where $\Delta(t)^{\dag}$ is the matrix of cofactors of $\alpha^{(0)}(t)$, \eqref{betaalphaInverse} and \eqref{AlphaAndBeta} give 
\begin{equation}\label{ProtoExpansion}
\beta(t)\alpha(t)^{-1} = \sum_{\vartheta\in \mathfrak S_\a} r_{\vartheta}(t)e^{t\vartheta} 
\end{equation}
where $\mathfrak S_\a$ was defined before the statement of Proposition \ref{AsymptOfProjA} as the additive semigroup generated by $\set{\lambda - \Re\lambda':\lambda,\lambda'\in \spec\a,\ \Re\lambda\leq \Re\lambda'}$ and $r_{\vartheta}(t)$ is a matrix whose entries are of the form
\begin{equation*}
\frac{p_\vartheta(e^{\im t\Im\lambda_1},\dotsc,e^{\im t\Im\lambda_N},t,1/t)}{q(e^{\im t\Im\lambda_1},\dotsc,e^{\im t\Im\lambda_N},1/t)^{n_\vartheta}}
\end{equation*}
for some polynomial $p_\vartheta(z^1,\dotsc,z^N,t,1/t)$ and nonnegative integers $n_\vartheta$. Multiplying the numerator and denominator by the same nonnegative (integral) power of $t$ we replace the dependence on $1/t$ by polynomial dependence in $e^{\im t\Im\lambda_1},\dotsc,e^{\im t\Im\lambda_N},t$ only. This gives the structure of the ``coefficients'' of the $e^{t\vartheta}$ stated in the proposition for the expansion of $\pi_{e^{t\a}D,K}$.
\end{proof}

The terms in \eqref{ProtoExpansion} with $\Re\vartheta=0$ come from $\beta^{(0)}(t)\alpha^{(0)}(t)^{-1}$. So the principal part of $\pi_{e^{t\a}D,K}$ is
\begin{equation*}
\sym(\pi_{e^{t\a}D,K})\phi=\big(\mathbf g+\mathbf u \cdot \beta^{(0)}(t)\alpha^{(0)}(t)^{-1}\big)\varphi^1
\end{equation*}
This principal part is not itself a projection, but 
\begin{equation*}
\|\sym(\pi_{e^{t\a}D,K})-\pi_{e^{t\a'}D_\infty,K}\|\to 0\quad \text{ as }\Re t\to \infty,\ t\in \Strip_\theta.
\end{equation*}

We now restate Proposition \ref{AsymptOfProjA} as an asymptotics for the family \eqref{PiScaling} using the notation $\bkappa$ for the action on $\Sing$ and express the asymptotics of $\pi_{\bkappa^{-1}_{\zeta^{1/m}} D,K}$ in terms of the boundary spectrum of $A$ exploiting \eqref{TheSpectrum}. Condition \eqref{HypB} below corresponds to our geometric condition in part (iii) of Theorem~\ref{EquivRayConditionsWedge} expressing the fact that $\Lambda$ is a sector of minimal growth for $A_{\wedge,\Dom_\wedge}$. The $\Omega$-limit set is the one defined in \eqref{OmegaMinus}.
 Recall that by $\zeta^{1/m}$ we mean the root defined by the principal branch of the logarithm on $\C\minus \overline \R_-$. We let $\lambda_0\ne 0$ be an element in the central axis of $\Lambda$ and define $\tilde \Lambda=\set{\zeta:\zeta\lambda_0\in \Lambda}$; this is a closed sector not containing the negative real axis.

Let $\mathfrak S\subset \C$ be the additive semigroup generated by 
\begin{equation*}
\set{\sigma-\im \Im\sigma':\sigma,\sigma'\in \spec_b(A),\ -m/2<\Im \sigma \leq \Im \sigma'<m/2}.
\end{equation*}
Thus $-\im\mathfrak S=\mathfrak S_\a$. Let $\sigma_1,\dotsc,\sigma_N$ be an enumeration of the elements of $\Sigma=\spec_b(A)\cap\set{-m/2<\Im\sigma<m/2}$. 

\begin{theorem}\label{AsymptOfProjB}
Let $K\in \Gr_{d'}(\Sing)$ be complementary to $D$, suppose that
\begin{equation}\label{HypB}
\mathscr V_K\cap \Omega^-_\Lambda(D) = \emptyset.
\end{equation}
Then there are polynomials $p_{\vartheta}(z^1,\dotsc,z^N,t)$ with values in $\End(\Sing)$ and $\C$-valued polynomials $q_{\vartheta}(z^1,\dotsc,z^N,t)$ such that 
\begin{equation}\label{qBddBelowB}
\exists\, C,\ R_0>0 \text{ such that }|q_\vartheta(\zeta^{\im \Re\sigma_1/m},\dotsc,\zeta^{\im \Re\sigma_N/m},t)|>C\text{ if } \zeta\in \tilde \Lambda,\ |\zeta|>R_0
\end{equation}
and such that
\begin{equation*}
\pi_{\bkappa^{-1}_{\zeta^{1/m}D},K} = \sum_{\vartheta\in \mathfrak S} \frac{\zeta^{-\im \vartheta/m}p_\vartheta(\zeta^{\im \Re\sigma_1/m},\dotsc,\zeta^{\im \Re\sigma_N/m},m^{-1} \log \zeta)}{q_{\vartheta}(\zeta^{\im \Re\sigma_1/m},\dotsc,\zeta^{\im \Re\sigma_N/m},m^{-1}\log\zeta)},\ \zeta\in \tilde \Lambda,\ |\zeta|>R_0
\end{equation*}
with uniform convergence in norm in the indicated subset of $\tilde\Lambda$.
\end{theorem}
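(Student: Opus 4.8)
The plan is to recognize Theorem~\ref{AsymptOfProjB} as the translation of Proposition~\ref{AsymptOfProjA} into the spectral variable $\zeta$ attached to the sector $\Lambda$ and into the boundary spectrum of $A$ via \eqref{TheSpectrum}, so that no genuinely new analysis is needed beyond a change of variables and a little bookkeeping. The dictionary is the substitution $t=m^{-1}\log\zeta$ with the principal branch of the logarithm: since $\a$ was defined as the infinitesimal generator of $\varrho\mapsto\bkappa_\varrho^{-1}$, the holomorphically extended action satisfies $\bkappa_\varrho^{-1}=e^{(\log\varrho)\a}$, whence $\bkappa_{\zeta^{1/m}}^{-1}D=e^{t\a}D$ with $t=m^{-1}\log\zeta$.

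First I would fix the geometry. Because $\lambda_0$ lies on the central axis of $\Lambda$, the sector $\tilde\Lambda=\set{\zeta:\zeta\lambda_0\in\Lambda}$ is symmetric about $\R_+$ and omits $\overline\R_-$, so $\zeta\mapsto m^{-1}\log\zeta$ is a biholomorphism of $\tilde\Lambda\setminus 0$ onto the strip $\Strip_\theta$ with $\theta$ equal to $1/m$ times the half-opening angle of $\tilde\Lambda$, carrying $\set{\zeta\in\tilde\Lambda:|\zeta|>R}$ onto the cofinal set $\set{t\in\Strip_\theta:\Re t>m^{-1}\log R}$; in particular ``$|\zeta|\to\infty$ in $\tilde\Lambda$'' corresponds to ``$\Re t\to\infty$ in $\Strip_\theta$''. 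The step I expect to require the most care --- really the only thing that needs checking --- is the matching of hypotheses, that is, that $\Omega^-_\Lambda(D)=\Omega^+_\theta(D)$ for this $\theta$. The set $\Omega^-_\Lambda(D)$ of \eqref{OmegaMinus} is built from sequences $\zeta_\nu$ with $\lambda_0\zeta_\nu\in\Lambda$, $|\zeta_\nu|\to\infty$ and $\bkappa_{\zeta_\nu^{1/m}}^{-1}D\to D'$; putting $t_\nu=m^{-1}\log\zeta_\nu$ and using the previous sentence, this is exactly the defining condition for $\Omega^+_\theta(D)$ in Proposition~\ref{ExistenceOfShadow}, provided one checks (by uniqueness of holomorphic extension, both agreeing on $\R_+$) that the holomorphically extended $\bkappa$ used in \eqref{OmegaMinus} coincides with $\zeta\mapsto e^{(m^{-1}\log\zeta)\a}$. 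Granting $\Omega^-_\Lambda(D)=\Omega^+_\theta(D)$, hypothesis \eqref{HypB} becomes literally hypothesis \eqref{HypA} for $\pi_{e^{t\a}D,K}$, so Proposition~\ref{AsymptOfProjA} applies to $D$ and $K$.

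Then I would invoke Proposition~\ref{AsymptOfProjA} and substitute. It gives, uniformly in norm for $\Re t$ large in $\Strip_\theta$, the expansion
\[
\pi_{e^{t\a}D,K}=\sum_{\vartheta\in\mathfrak S_\a}\frac{e^{t\vartheta}\,p_\vartheta(e^{\im t\Im\lambda_1},\dotsc,e^{\im t\Im\lambda_N},t)}{q_\vartheta(e^{\im t\Im\lambda_1},\dotsc,e^{\im t\Im\lambda_N},t)},\qquad \lambda_k\in\spec\a,
\]
with $q_\vartheta$ bounded below as in \eqref{qBddBelowA}. Substituting $t=m^{-1}\log\zeta$: the last argument becomes $m^{-1}\log\zeta$; since $\mathfrak S_\a=-\im\mathfrak S$ we write $\vartheta=-\im\vartheta'$ with $\vartheta'\in\mathfrak S$, so $e^{t\vartheta}=\zeta^{-\im\vartheta'/m}$; and $e^{\im t\Im\lambda_k}=\zeta^{\im\Im\lambda_k/m}$, which by \eqref{TheSpectrum} (where $\lambda_k=-\im\sigma_k-m/2$ with $\sigma_k\in\Sigma$) equals $\zeta^{-\im\Re\sigma_k/m}$, a negative power of $\zeta^{\im\Re\sigma_k/m}$. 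These negative powers are cleared by multiplying numerators and denominators by suitable positive powers of the $\zeta^{\im\Re\sigma_k/m}$, exactly as at the end of the proof of Proposition~\ref{AsymptOfProjA}; this preserves the lower bound on the denominators because $|\zeta^{\im\Re\sigma_k/m}|=e^{-(\Re\sigma_k/m)\arg\zeta}$ is bounded above and below on $\tilde\Lambda$. The bound \eqref{qBddBelowA} then becomes \eqref{qBddBelowB} and the uniform convergence transfers from $\set{\Re t>R_0'}$ to $\set{\zeta\in\tilde\Lambda:|\zeta|>e^{mR_0'}}$, which is the assertion. To summarize: the substantive work --- Proposition~\ref{AsymptOfProjA}, and behind it Lemma~\ref{BasicLemma} --- is already done, and the present proof is essentially a change of variables; its only real (and routine) obstacle is the identification $\Omega^-_\Lambda(D)=\Omega^+_\theta(D)$ and keeping the branch of the logarithm straight.
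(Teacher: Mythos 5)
Your proposal is correct and takes essentially the same approach as the paper, which presents Theorem~\ref{AsymptOfProjB} explicitly as a ``restatement'' of Proposition~\ref{AsymptOfProjA} via the substitution $t=m^{-1}\log\zeta$ and the dictionary \eqref{TheSpectrum} between $\spec\a$ and $\spec_b(A)$; you have simply spelled out the details (the biholomorphism of $\tilde\Lambda\setminus 0$ onto $\Strip_\theta$, the identification $\Omega^-_\Lambda(D)=\Omega^+_\theta(D)$ by uniqueness of holomorphic extension, the algebra $\Im\lambda_k=-\Re\sigma_k$, and clearing the resulting negative powers of $\zeta^{\im\Re\sigma_k/m}$ by multiplying numerator and denominator, harmless because $|\zeta^{\im\Re\sigma_k/m}|$ is bounded above and below on $\tilde\Lambda$). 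This is precisely what the paper intends.
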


The elements $\vartheta\in \mathfrak S$ are of course finite sums $\vartheta = \sum n_{jk}(\sigma_j-\im \Im\sigma_k)$ for some nonnegative integers $n_{jk}$, with $\sigma_j$, $\sigma_k\in \Sigma$ and $\Im \sigma_j\leq  \Im\sigma_k$. Separating real and imaginary parts we may write $\zeta^{-\im \vartheta/m}$ as a product of factors
\begin{equation*}
 \frac{\zeta^{n_{jk}(\Im \sigma_j-\Im\sigma_k)/m}}{\zeta^{\im n_{jk}\Re\sigma_k/m}}.
\end{equation*}
We thus see that we may also organize the series expansion of $\pi_{\bkappa^{-1}_{\zeta^{1/m}D},K}$ in the theorem as
\begin{equation*}
\pi_{\bkappa^{-1}_{\zeta^{1/m}D},K} = \sum_{\vartheta\in \mathfrak S_\R} \frac{\zeta^{-\im \vartheta/m}\tilde p_\vartheta(\zeta^{\im \Re\sigma_1/m},\dotsc,\zeta^{\im \Re\sigma_N/m},m^{-1} \log \zeta)}{\tilde q_{\vartheta}(\zeta^{\im \Re\sigma_1/m},\dotsc,\zeta^{\im \Re\sigma_N/m},m^{-1}\log\zeta)}
\end{equation*}
where $\mathfrak S_\R\subset \R$ is the additive semigroup generated by
\begin{equation*}
\set{\Im \sigma-\Im\sigma':\sigma,\sigma'\in \Sigma,\ \Im\sigma'\leq \Im\sigma'}
\end{equation*}
and $\tilde p_\vartheta$, $\tilde q_\vartheta$ are still polynomials.

\begin{remark}\label{RemarkAlignedB-Spec}
If $\Sigma$ lies on a line $\Re\sigma =c_0$, then $-\im\mathfrak S\subset \overline \R_--\im c_0$. Also in this case, the coefficients of the exponents in \eqref{TheVkAgain} can be assumed to have vanishing imaginary part (see Remark~\ref {RemarkAlignedSpec}). Assuming this, the coefficients of the exponents in \eqref{alpha0andtilde} are real, in particular $\det \alpha^{(0)}(t)$ is just a polynomial in $1/t$, the coefficients $r_\vartheta$ in the expansion \eqref{ProtoExpansion} can be written as rational functions of $t$ only. Consequently, in the expansion of the projection in Theorem~\ref{AsymptOfProjB}, the powers $-\im \vartheta$ are real $\leq 0$ and the coefficients can be written as rational functions of $\log\zeta$.
\end{remark}

\section{Asymptotic structure of the resolvent}\label{sec:AymtpoticsResolvent}

For the analysis of $(A_{\Dom}-\lambda)^{-\ell}$ for $\ell \in \N$ sufficiently
large we make use of the representation \eqref{ResolventStructure} of the resolvent as
\begin{gather}
(A_{\Dom}-\lambda)^{-1} = B(\lambda) + G_{\Dom}(\lambda), \label{ResolventStructureRepeat}
\intertext{where $B(\lambda)$ is a parametrix of 
$(A_{\min}-\lambda)$ and}
G_{\Dom}(\lambda)= [1-B(\lambda)(A-\lambda)]F_{\Dom}(\lambda)^{-1}T(\lambda).\label{GDPieces}
\end{gather}
The starting point of our analysis is
$$
(A_{\Dom}-\lambda)^{-\ell} = \frac{1}{(\ell-1)!}\partial_{\lambda}^{\ell-1}
(A_{\Dom}-\lambda)^{-1} \textup{ for any } \ell \in \N.
$$
We are thus led to further analyze the asymptotic structure of the pieces involved
in the representation of the resolvent.
In \cite{GKM5a} we described in full generality the behavior of
$B(\lambda)$, $[1-B(\lambda)(A-\lambda)]$, and $T(\lambda)$, and we analyzed
$F_{\Dom}(\lambda)^{-1}$ in the special case that $\Dom$ is stationary.
In the case of a general domain $\Dom$, we now obtain as a consequence of
Theorem~\ref{AsymptOfProjB} the following result.

\begin{proposition}\label{FInvStructure2}
For $R > 0$ large enough we have
$$
F_{\Dom}(\lambda)^{-1} \in \bigl(S_{\r}^{0^+} \cap S^{0}\bigr)
(\Lambda_R;\Dom_{\wedge}/\Dom_{\wedge,\min},\Dom_{\max}/\Dom_{\min}).
$$
The components of $F_{\Dom}(\lambda)^{-1}$ have orders $\nu^+$ with $\nu \in \E$,
the semigroup defined in \eqref{SemigroupDef}, and their phases belong to the
set $\M$ defined in \eqref{PhasesMainThm}.
\end{proposition}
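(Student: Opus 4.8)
The plan is to obtain the structure of $F_{\Dom}(\lambda)^{-1}$ from that of its model $F_{\wedge,\Dom_\wedge}(\lambda)^{-1}$ — just determined in Proposition~\ref{FwedgeInvSymbol} — by a Neumann-series inversion carried out inside the refined symbol classes of the appendix. That $F_{\Dom}(\lambda)$ is invertible for $\lambda\in\Lambda_R$ with $R$ large is not at issue: by \eqref{RayConditions} and \cite{GKM2} the sector $\Lambda$ is of minimal growth for $A_{\Dom}$, and $F_{\Dom}(\lambda)$ is invertible exactly when $A_{\Dom}-\lambda$ is, by the cone analogue of the discussion of Section~\ref{sec:ModelResolvent}.

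The first, and essential, step is to recall from \cite{GKM2,GKM5a} the comparison of $F_{\Dom}(\lambda)$ with its model. The families $B(\lambda)$, $T(\lambda)$, $K(\lambda)$ out of which $F_{\Dom}(\lambda)=[T(\lambda)(A-\lambda)]|_{\Dom/\Dom_{\min}}$ is built are modeled on the wedge families $B_\wedge(\lambda)$, $T_\wedge(\lambda)$, $K_\wedge(\lambda)$, the discrepancy being governed entirely by $A-A_\wedge$, which vanishes at $Y$. Transporting everything through the ($\lambda$-independent) isomorphism $\theta$, this yields a factorization
\[
 F_{\Dom}(\lambda) = \theta^{-1}F_{\wedge,\Dom_\wedge}(\lambda)\,\theta\circ\bigl(\id+C(\lambda)\bigr),
\]
where $C(\lambda)$ lies in $S_{\r}^{0^+}\cap S^0$ with components of integer orders $\le -1$ (hence in $-\N_0\subset\E$) and introduces no new phases; in particular $\|C(\lambda)\|\to0$ as $|\lambda|\to\infty$ in $\Lambda$. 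This is precisely the input from \cite{GKM5a} that served there to treat the stationary case; the only new feature now is that $\theta^{-1}F_{\wedge,\Dom_\wedge}(\lambda)\theta$ is no longer anisotropically homogeneous.

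Given this, the conclusion follows by inversion. Since $\|C(\lambda)\|\to0$, the operator $\id+C(\lambda)$ is invertible for $\lambda\in\Lambda_R$, $R$ large, with inverse $\sum_{k\ge0}(-C(\lambda))^k$; by the stability of $S_{\r}^{0^+}\cap S^0$ under composition and asymptotic summation (appendix) and the fact that the orders of the successive terms tend to $-\infty$, this inverse again belongs to $S_{\r}^{0^+}\cap S^0$. Hence
\[
 F_{\Dom}(\lambda)^{-1} = \bigl(\id+C(\lambda)\bigr)^{-1}\circ\theta^{-1}F_{\wedge,\Dom_\wedge}(\lambda)^{-1}\,\theta
\]
is a composition of elements of $S_{\r}^{0^+}\cap S^0$, hence lies in that class; composing on either side with the constant maps $\theta^{-1}$ and the inclusion $\Dom/\Dom_{\min}\hookrightarrow\Dom_{\max}/\Dom_{\min}$ yields the asserted membership. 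The statement on orders and phases then follows immediately: the component orders of $F_{\wedge,\Dom_\wedge}(\lambda)^{-1}$ lie in $\E$ and its phases in $\M$ (Proposition~\ref{FwedgeInvSymbol} and the remark after it), those of $C(\lambda)$ lie in $-\N_0\subset\E$ with no new phases, $\E$ is an additive semigroup, and the admissible phases form the fixed finite set $\M$ — all of this is preserved under the composition and asymptotic summation used above.

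The step carrying the real weight is the first one: reading off from \cite{GKM5a} the comparison of $F_{\Dom}(\lambda)$ with $F_{\wedge,\Dom_\wedge}(\lambda)$ with the group actions on $\Dom_{\max}/\Dom_{\min}$, $\Dom_{\wedge,\max}/\Dom_{\wedge,\min}$ and their relevant subquotients matched correctly, so that $C(\lambda)$ really has negative order in the class-theoretic sense and not merely pointwise. This bookkeeping is exactly what the refined symbol classes of the appendix are designed to make routine; once it is in place, the remaining steps amount to invoking the stability of $S_{\r}^{0^+}\cap S^0$. It is worth stressing that, unlike the stationary case of \cite{GKM5a}, $\theta^{-1}F_{\wedge,\Dom_\wedge}(\lambda)\theta$ is not homogeneous, so the Neumann inversion genuinely produces components of non-integer order in $\E$ with rational, not merely polynomial, coefficients — which is precisely why the class $S_{\r}$, rather than the classical symbol class, is the correct setting here.
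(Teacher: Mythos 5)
Your proposal is correct in substance and follows essentially the same route as the paper: invoke Proposition~\ref{FwedgeInvSymbol} for the refined class of $F_{\wedge,\Dom_\wedge}(\lambda)^{-1}$, compare $F_{\Dom}$ with the model via \cite[Proposition 5.10]{GKM5a}, and finish with a Neumann series carried out inside $S_\r^{0^+}\cap S^0$ using the stability properties of the appendix.

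Two small points of bookkeeping deserve attention. First, the displayed factorization
\(F_{\Dom}(\lambda)=\theta^{-1}F_{\wedge,\Dom_\wedge}(\lambda)\,\theta\,(\id+C(\lambda))\)
does not type-check: the right-hand side maps $\Dom/\Dom_{\min}$ to itself, while $F_{\Dom}(\lambda)$ maps $\Dom/\Dom_{\min}$ into $\Dom_\wedge/\Dom_{\wedge,\min}$; drop the outer $\theta^{-1}$, or conjugate $F_{\Dom}$ by $\theta$ as well. Relatedly, the paper organizes the comparison the other way around: \cite[Proposition 5.10]{GKM5a} says that $\Phi_0(\lambda):=\theta^{-1}F_{\wedge,\Dom_\wedge}(\lambda)^{-1}$ is an approximate \emph{right} inverse of $F(\lambda)$ on $\Dom_{\max}/\Dom_{\min}$, with remainder $R(\lambda)=F(\lambda)\Phi_0(\lambda)-1$, rather than providing a right factorization of $F_{\Dom}$ directly; the two formulations are equivalent after restricting to $\Dom/\Dom_{\min}$ and using finite-dimensionality, but your write-up takes this as given rather than deriving it. Second, you quietly assert $C(\lambda)\in S_\r^{0^+}\cap S^0$ with integer-order components. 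The cited result only gives $R(\lambda)\in S^{-1+\eps}$ a priori; the paper explicitly upgrades this to $R(\lambda)\in S_\r^{(-1)^+}$ by observing that $F(\lambda)$ is log-polyhomogeneous and $\Phi_0(\lambda)\in S_\r^{0^+}\cap S^0$, so the product lies in the refined class. This upgrade is precisely what makes the Neumann series stay within $S_\r$, so it should be made explicit rather than subsumed in the phrase ``this is precisely the input from \cite{GKM5a}.'' With those two adjustments, your argument coincides with the one in the paper.
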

Here $S^{0}(\Lambda_R;\Dom_{\wedge}/\Dom_{\wedge,\min},\Dom_{\max}/\Dom_{\min})$
denotes the standard space of (anisotropic) operator valued symbols of order zero
on $\Lambda_R$ (see the appendix), where $\Dom_{\wedge}/\Dom_{\wedge,\min}$ carries the
trivial group action, and  $\Dom_{\max}/\Dom_{\min}$ is equipped with the group action
$\tilde{\kappa}_{\varrho} = \theta^{-1}\bkappa_\rho\theta$.
The symbol class
$S_{\r}^{0^+}(\Lambda_R;\Dom_{\wedge}/\Dom_{\wedge,\min},\Dom_{\max}/\Dom_{\min})$
is discussed in the appendix (see Definition~\ref{symboldefinition}).
Recall that $\Lambda_R = \{\lambda \in \Lambda : |\lambda| \geq R\}$.

\begin{proof}[Proof of Proposition~\ref{FInvStructure2}]
We follow the line of reasoning of \cite[Propositions 5.10 and 5.17]{GKM5a}.
The crucial point is that we now know from Theorem~\ref{AsymptOfProjB}
and Proposition~\ref{FwedgeInvSymbol} that $F_{\wedge,\Dom_{\wedge}}(\lambda)^{-1}$
belongs to the symbol class
$$
\bigl(S_{\r}^{0^+} \cap S^{0}\bigr)
(\Lambda_R;\Dom_{\wedge}/\Dom_{\wedge,\min},\Dom_{\wedge,\max}/\Dom_{\wedge,\min}),
$$
where the actions on $\Dom_{\wedge}/\Dom_{\wedge,\min}$ and
$\Dom_{\wedge,\max}/\Dom_{\wedge,\min}$ are, respectively, the trivial action as above
and $\bkappa_{\varrho}$. The components of $F_{\wedge,\Dom_{\wedge}}(\lambda)^{-1}$
have orders $\nu^+$ with $\nu \in \E$, and their phases belong to the set $\M$.
Consequently, $\Phi_0(\lambda) = \theta^{-1}F_{\wedge,\Dom_{\wedge}}(\lambda)^{-1}$
belongs to
$$
\bigl(S_{\r}^{0^+} \cap S^{0}\bigr)
(\Lambda_R;\Dom_{\wedge}/\Dom_{\wedge,\min},\Dom_{\max}/\Dom_{\min}),
$$
and we have the same statement about the orders and phases of its components.

Phrased in the terminology of the present paper, we proved (see \cite[Proposition 5.10]{GKM5a}) that the operator family
$$
F(\lambda) = [T(\lambda)(A-\lambda)] : \Dom_{\max}/\Dom_{\min} \to 
\Dom_{\wedge}/\Dom_{\wedge,\min}
$$
belongs to the symbol class
$$
\bigl(S_{\r}^{0^+} \cap S^{0}\bigr)
(\Lambda_R;\Dom_{\max}/\Dom_{\min},\Dom_{\wedge}/\Dom_{\wedge,\min}),
$$
and that
$$
F(\lambda)\Phi_0(\lambda) - 1 = R(\lambda) \in
S^{-1+\eps}(\Lambda_R;\Dom_{\wedge}/\Dom_{\wedge,\min},\Dom_{\wedge}/\Dom_{\wedge,\min})
$$
for any $\eps > 0$. More precisely, $F(\lambda)$ is an anisotropic log-polyhomogeneous
operator valued symbol. We thus can infer further that in fact
$$
R(\lambda) \in
S_{\r}^{(-1)^+}(\Lambda_R;\Dom_{\wedge}/\Dom_{\wedge,\min},\Dom_{\wedge}/\Dom_{\wedge,\min}),
$$
and that the components of $R(\lambda)$ have orders $\nu^+$ with $\nu \in \E$,
$\nu \leq -1$, and phases belonging to the set $\M$.
The usual Neumann series argument then yields the existence of a symbol $R_1(\lambda) \in S_{\r}^{(-1)^+}(\Lambda_R;\Dom_{\wedge}/\Dom_{\wedge,\min},\Dom_{\wedge}/\Dom_{\wedge,\min})$
such that $F(\lambda)\Phi_0(\lambda)(1+R_1(\lambda)) = 1$ for $\lambda \in \Lambda_R$. 
Consequently $F_{\Dom}(\lambda)^{-1} = \Phi_0(\lambda)(1+R_1(\lambda))$ belongs to
$$
\bigl(S_{\r}^{0^+} \cap S^{0}\bigr)
(\Lambda_R;\Dom_{\wedge}/\Dom_{\wedge,\min},\Dom_{\max}/\Dom_{\min}),
$$
and its components have the structure that was claimed.
\end{proof}

With Proposition~\ref{FInvStructure2} and our results in \cite[Section 5]{GKM5a} at our disposal, we now obtain a general theorem about the asymptotics of the finite rank contribution $G_{\Dom}(\lambda)$ in the representation \eqref{ResolventStructureRepeat} of the resolvent. Before stating it we recall and rephrase the relevant results from \cite{GKM5a} about the other pieces involved in \eqref{GDPieces} using the terminology of the present paper.

Concerning $T(\lambda)$ we have (\cite[Proposition 5.5]{GKM5a}): 
\begin{enumerate}
\item [{\it i})]For any cut-off function $\omega \in C^{\infty}_c([0,1))$ the function
$T(\lambda)(1-\omega)$ is rapidly decreasing on $\Lambda$ taking values in
$\L(x^{-m/2}H^s_b,\Dom_{\wedge}/\Dom_{\wedge,\min})$, and
$$
t(\lambda) = T(\lambda)\omega \in S^{-m}(\Lambda;\K^{s,-m/2},\Dom_{\wedge}/\Dom_{\wedge,\min}).
$$
Here $\K^{s,-m/2}$ is equipped with the (normalized) dilation group action
$\kappa_{\varrho}$, and we give $\Dom_{\wedge}/\Dom_{\wedge,\min}$ again 
the trivial action.
\item [{\it ii})]The family $t(\lambda)$ admits a full asymptotic expansion into anisotropic homogeneous components. In particular, we have
$$
t(\lambda) \in S_{\r}^{(-m)^+}(\Lambda;\K^{s,-m/2},\Dom_{\wedge}/\Dom_{\wedge,\min}).
$$
\end{enumerate}
The spaces $\K^{s,-m/2}$ are weighted cone Sobolev spaces on $Y^{\wedge}$.
We discussed them in \cite[Section~2]{GKM2} and reviewed the definition in
\cite[Section~4]{GKM5a} (see also \cite{SchulzeNH}, where different
weight functions as $x \to \infty$ are considered).
Note that $\K^{0,-m/2} = x^{-m/2}L^2_b(Y^{\wedge};E)$.

Concerning $1-B(\lambda)(A-\lambda)$ we have, for arbitrary $\varphi \in C^{\infty}(M;\End(E))$ (\cite[Proposition 5.20]{GKM5a}):
\begin{enumerate}
\item [{\it iii})] The operator function
$P(\lambda) = \varphi[1-B(\lambda)(A-\lambda)]$
is a smooth function
\begin{equation*}
\Lambda_R \to  \L(\Dom_{\max}/\Dom_{\min},x^{-m/2}H^s_b)
\end{equation*}
which is defined for $R > 0$ large enough. Let $\omega \in C_c^{\infty}([0,1))$ be an arbitrary cut-off function. Then
$(1-\omega)P(\lambda)$ is rapidly decreasing on $\Lambda_R$, and
$$
p(\lambda) = \omega P(\lambda) \in
S^{0}(\Lambda_R;\Dom_{\max}/\Dom_{\min},\K^{s,-m/2}),
$$
where $\K^{s,-m/2}$ is equipped with the (normalized) dilation group action
$\kappa_{\varrho}$, and the quotient $\Dom_{\max}/\Dom_{\min}$ is equipped with
the group action $\tilde{\kappa}_{\varrho}$.

\item [{\it iv})] $p(\lambda)$ is an anisotropic log-polyhomogeneous operator valued symbol on $\Lambda_R$.
In particular,
$$
p(\lambda) = \omega P(\lambda) \in
S_{\r}^{0^+}(\Lambda_R;\Dom_{\max}/\Dom_{\min},\K^{s,-m/2}).
$$
\end{enumerate}

With $\M$ as in \eqref{PhasesMainThm} and $\E$ as in \eqref{SemigroupDef}
we have:

\begin{theorem}\label{GStructure2}
Let $\vp\in C^\infty(M;\textup{End}(E))$, and let
$\omega,\tilde\omega\in C_c^\infty([0,1))$ be arbitrary cut-off functions.
For $R > 0$ large enough the operator family $G_{\Dom}(\lambda)$ is defined
on $\Lambda_R$, and
$$
(1-\omega)\vp G_\Dom(\lambda), \;
\vp G_\Dom(\lambda)(1-\omega) \in
\S(\Lambda_R,\ell^1(x^{-m/2}H^s_b,x^{-m/2}H^t_b)).
$$
Moreover,
$$
\omega\vp G_\Dom(\lambda)\tilde \omega \in
\bigl(S_{\r}^{(-m)^+} \cap S^{-m}\bigr)(\Lambda_R;\K^{s,-m/2},\K^{t,-m/2}),
$$
where the spaces $\K^{s,-m/2}$ and $\K^{t,-m/2}$ are equipped with
the group action $\kappa_{\varrho}$.
In fact, $\omega\vp G_\Dom(\lambda)\tilde \omega$ takes values in the
trace class operators, and all statements about symbol estimates and
asymptotic expansions hold in trace class norms.
The components have orders $\nu^+$ with $\nu \in \E$, $\nu \leq -m$,
and their phases belong to $\M$.
\end{theorem}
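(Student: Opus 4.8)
The plan is to decompose $G_\Dom(\lambda)$ according to \eqref{GDPieces} into the composition of three pieces whose symbol-theoretic properties are now all in hand, and then to invoke the composition and mapping properties of the refined symbol classes $S_\r^{\nu^+}$ from the appendix. Concretely, write
\[
\vp G_\Dom(\lambda) = \vp[1-B(\lambda)(A-\lambda)]\,F_\Dom(\lambda)^{-1}\,T(\lambda)
 = P(\lambda)\,F_\Dom(\lambda)^{-1}\,T(\lambda),
\]
with $P(\lambda)=\vp[1-B(\lambda)(A-\lambda)]$ as in items (\textit{iii}), (\textit{iv}) above, $F_\Dom(\lambda)^{-1}$ as in Proposition~\ref{FInvStructure2}, and $T(\lambda)$ as in items (\textit{i}), (\textit{ii}). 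First I would insert cut-off functions: choosing cut-offs $\omega,\tilde\omega,\omega_1\in C_c^\infty([0,1))$ with $\omega\omega_1=\omega_1$ and $\tilde\omega$ equal to $1$ on $\operatorname{supp}\omega_1$ (and similarly on the $T$-side), the decay statements in (\textit{i}) and (\textit{iii}) — that $(1-\omega)P(\lambda)$ and $T(\lambda)(1-\tilde\omega)$ are rapidly decreasing — immediately give the first assertion: the off-diagonal pieces $(1-\omega)\vp G_\Dom(\lambda)$ and $\vp G_\Dom(\lambda)(1-\tilde\omega)$ lie in $\S(\Lambda_R,\ell^1(x^{-m/2}H^s_b,x^{-m/2}H^t_b))$, because a Schwartz factor kills everything and the Sobolev mapping properties of the remaining factors are uniform. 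Then it remains to analyze the truncated piece $\omega\vp G_\Dom(\lambda)\tilde\omega = p(\lambda)\,F_\Dom(\lambda)^{-1}\,t(\lambda)$ (after absorbing the inner cut-offs using the group-action compatibility), which is a composition
\[
\K^{s,-m/2}\xrightarrow{\ t(\lambda)\ }\Dom_\wedge/\Dom_{\wedge,\min}
 \xrightarrow{\ F_\Dom(\lambda)^{-1}\ }\Dom_{\max}/\Dom_{\min}
 \xrightarrow{\ p(\lambda)\ }\K^{t,-m/2}
\]
in the symbol calculus, where the intermediate finite-dimensional spaces carry the actions $\tilde\kappa_\varrho=\theta^{-1}\bkappa_\varrho\theta$ and the trivial action, exactly matched so that the compositions are legal.

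Next I would run the bookkeeping on orders, phases, and the semigroup $\E$. By (\textit{ii}), $t(\lambda)\in S_\r^{(-m)^+}\cap S^{-m}$; by Proposition~\ref{FInvStructure2}, $F_\Dom(\lambda)^{-1}\in S_\r^{0^+}\cap S^0$ with components of orders $\nu^+$, $\nu\in\E$, and phases in $\M$; by (\textit{iv}), $p(\lambda)\in S_\r^{0^+}\cap S^0$ — and here I would remark that $p(\lambda)$ is anisotropic log-polyhomogeneous in the \emph{integer}-order sense, so its components contribute orders in $-\N_0\subset\E$ and no new phases. The composition theorem for the classes $S_\r^{\nu^+}$ (appendix) then yields $\omega\vp G_\Dom(\lambda)\tilde\omega\in (S_\r^{(-m)^+}\cap S^{-m})(\Lambda_R;\K^{s,-m/2},\K^{t,-m/2})$ with the stated group actions $\kappa_\varrho$ on both ends (the intermediate $\tilde\kappa$-actions cancel in the composition, since the middle factors are homogeneous of the correct degrees with respect to the matching actions). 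The order of the leading component is $-m$ because $t(\lambda)$ contributes $-m$ and the other two contribute $0$; every other component has order $\nu^+$ with $\nu\le -m$, and $\nu\in\E$ because $\E$ is closed under addition and already absorbs both the $-\N_0$ from $p$ and $t$ and the spectral contributions from $F_\Dom(\lambda)^{-1}$; the phases lie in $\M$ for the same reason. Finally, the trace-class statement: since the intermediate spaces are finite-dimensional, $\omega\vp G_\Dom(\lambda)\tilde\omega$ factors through a finite-rank operator, hence is trace class; and the symbol estimates and asymptotic expansion persist in the trace norm because on the relevant scales the trace norm of a finite-rank operator is controlled by a fixed multiple of its operator norm (the rank being bounded by $\dim(\Dom_{\max}/\Dom_{\min})$), so every estimate in $\L(\K^{s,-m/2},\K^{t,-m/2})$ upgrades verbatim to $\ell^1(\K^{s,-m/2},\K^{t,-m/2})$.

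The main obstacle I expect is not any single estimate but the careful alignment of group actions across the three factors in the composition so that the refined-symbol composition theorem from the appendix actually applies: $T(\lambda)$ and $[1-B(\lambda)(A-\lambda)]$ come with the normalized dilation action $\kappa_\varrho$ on the $Y^\wedge$-side and the $\tilde\kappa_\varrho$ or trivial action on the finite-dimensional quotients, while Proposition~\ref{FInvStructure2} was stated with $\Dom_\wedge/\Dom_{\wedge,\min}$ carrying the trivial action and $\Dom_{\max}/\Dom_{\min}$ carrying $\tilde\kappa_\varrho$. One must check that these conventions are mutually consistent — i.e.\ that the "output action" of each factor equals the "input action" of the next — and, relatedly, that the homogeneity of the anisotropic components is measured with respect to the correct actions so that orders add correctly under composition. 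The secondary technical point, already anticipated above, is justifying that all symbol estimates survive passage to the trace norm; this is routine once one records that the rank of the truncated operator is uniformly bounded, but it does need to be stated, and it is also where the hypotheses $m\ell>n$ (equivalently, enough $\lambda$-derivatives) interact with Sobolev exponents when one later differentiates $(A_\Dom-\lambda)^{-1}$ to get $(A_\Dom-\lambda)^{-\ell}$ — though that differentiation is deferred to Theorem~\ref{MainThmTraceExpansion} and only the symbol structure is needed here.
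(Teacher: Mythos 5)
Your proposal is correct and follows the same approach as the paper, which simply observes that the theorem ``follow[s] at once'' from the factorization \eqref{GDPieces}, the symbol-class membership of its three factors (items (\textit{i})--(\textit{iv}) and Proposition~\ref{FInvStructure2}), and the composition rules for the $S_\r^{\nu^+}$ classes in Proposition~\ref{symbolprop}. You have spelled out the routine bookkeeping of orders, phases, and matching group actions at the intermediate spaces, and supplied the (needed but left implicit) observation that the uniform finite-rank bound on $\omega\vp G_\Dom(\lambda)\tilde\omega$ upgrades every operator-norm symbol estimate to the trace norm.
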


\begin{corollary}\label{TraceGStructure2}
For $R > 0$ sufficiently large and $\vp\in C^\infty(M;\End(E))$, the operator family
$\varphi G_D(\lambda)$ is a smooth family of trace class operators in $x^{-m/2}L^2_b$
for $\lambda \in \Lambda_R$, and
$\Tr \bigl(\varphi G_D(\lambda)\bigr) \in
\bigl(S_{\r}^{(-m)^+} \cap S^{-m}\bigr)(\Lambda_R)$.
The components have orders $\nu^+$ with $\nu \in \E$, $\nu \leq -m$,
and their phases belong to the set $\M$.
\end{corollary}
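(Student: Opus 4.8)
The plan is to deduce Corollary~\ref{TraceGStructure2} directly from Theorem~\ref{GStructure2} by composing with $\vp$, inserting the cut-off decomposition, and applying the trace. First I would write $\vp G_\Dom(\lambda) = \omega\vp G_\Dom(\lambda)\tilde\omega + (1-\omega)\vp G_\Dom(\lambda) + \omega\vp G_\Dom(\lambda)(1-\tilde\omega)$ for cut-off functions $\omega,\tilde\omega\in C_c^\infty([0,1))$ with $\omega\equiv 1$ on the support of $\tilde\omega$ (or any fixed admissible choice). By Theorem~\ref{GStructure2}, the last two terms are Schwartz functions of $\lambda\in\Lambda_R$ with values in $\ell^1$-classes of operators between cone Sobolev spaces on $Y^\wedge$, hence in particular rapidly decreasing families of trace class operators; their traces are then elements of $\S(\Lambda_R)\subset\bigl(S_{\r}^{(-m)^+}\cap S^{-m}\bigr)(\Lambda_R)$ with no contribution to the list of components. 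The middle term $\omega\vp G_\Dom(\lambda)\tilde\omega$ lies in $\bigl(S_{\r}^{(-m)^+}\cap S^{-m}\bigr)(\Lambda_R;\K^{s,-m/2},\K^{t,-m/2})$ with values in trace class operators, with symbol estimates and asymptotic expansion valid in the trace norm.

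The key step is then to observe that the scalar trace functional $\Tr\colon \ell^1(\K^{s,-m/2},\K^{t,-m/2})\to\C$ (more precisely, restricted to the trace ideal inside $\L(\K^{t,-m/2})$ after using the continuous embedding $\K^{t,-m/2}\embed\K^{s,-m/2}$ for $t\ge s$, or simply on trace class operators on $x^{-m/2}L^2_b$) is a bounded linear map which intertwines the group actions trivially: since $\Tr(\kappa_\varrho^{-1}T\kappa_\varrho)=\Tr(T)$, the functional $\Tr$ is $\kappa$-invariant, i.e. homogeneous of degree $0$ with respect to the trivial action on $\C$. Composition of an operator-valued symbol with a fixed bounded operator-space morphism that respects the group actions maps $\bigl(S_{\r}^{\nu^+}\cap S^{\nu}\bigr)(\Lambda_R;E,\tilde E)$ into $\bigl(S_{\r}^{\nu^+}\cap S^{\nu}\bigr)(\Lambda_R;E',\tilde E')$ and takes homogeneous components to homogeneous components of the same order — this is one of the basic functoriality properties of the symbol classes recorded in the appendix. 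Applying this with $\nu=-m$, the space $\C$ carrying the trivial action, we obtain $\Tr\bigl(\omega\vp G_\Dom(\lambda)\tilde\omega\bigr)\in\bigl(S_{\r}^{(-m)^+}\cap S^{-m}\bigr)(\Lambda_R)$, with components of orders $\nu^+$, $\nu\in\E$, $\nu\le -m$, and phases in $\M$, since the trace does not create new components nor shift orders or phases.

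Finally I would assemble the two contributions. Because $\S(\Lambda_R)\subset\bigl(S_{\r}^{(-m)^+}\cap S^{-m}\bigr)(\Lambda_R)$ and the symbol classes are closed under addition with the component structure inherited from the summands, the sum $\Tr\bigl(\vp G_\Dom(\lambda)\bigr)$ lies in $\bigl(S_{\r}^{(-m)^+}\cap S^{-m}\bigr)(\Lambda_R)$ and its asymptotic expansion is that of the middle term: components of orders $\nu^+$ with $\nu\in\E$, $\nu\le -m$, and phases in $\M$. Smoothness in $\lambda$ and the trace class property of $\vp G_\Dom(\lambda)$ on $x^{-m/2}L^2_b$ for $\lambda\in\Lambda_R$ are immediate from Theorem~\ref{GStructure2}, since the three pieces are trace class and depend smoothly on $\lambda$.

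The only point requiring genuine care — the ``main obstacle'' — is bookkeeping the group actions so that the trace functional is legitimately a morphism in the relevant category of symbol spaces: one must check that $\Tr$ is bounded on the trace ideal between $\K^{s,-m/2}$ and $\K^{t,-m/2}$ with their $\kappa_\varrho$-actions and that it is exactly invariant under these actions, so that no spurious homogeneity degree or logarithmic order is introduced when passing from operator-valued components to their traces. Once that compatibility is in place, the rest is the routine functoriality of the classes $S_{\r}^{\nu^+}\cap S^{\nu}$ from the appendix.
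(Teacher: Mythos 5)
Your argument is correct and supplies exactly the reasoning that the paper leaves implicit — the authors only remark that ``Theorem~\ref{GStructure2} and Corollary~\ref{TraceGStructure2} follow at once from the previous results \ldots\ and the properties of the operator valued symbol class discussed in the appendix,'' and your proof (decompose via cut-offs, note the two off-diagonal pieces are Schwartz so contribute nothing to the expansion, then use that the trace is bounded in $\ell^1$-norm and invariant under conjugation by $\kappa_\varrho$ so that anisotropic orders, phases, and logarithmic degrees are preserved) is the natural filling-in of that claim. The one small blemish is the aside about the ``trace ideal between $\K^{s,-m/2}$ and $\K^{t,-m/2}$'': the trace functional is defined on operators from a fixed Hilbert space to itself, so there is nothing to check for $s\ne t$ --- one simply takes $s=t=0$ and works with trace class operators on $x^{-m/2}L^2_b=\K^{0,-m/2}$, as you correctly note in passing.
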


Theorem~\ref{GStructure2} and Corollary~\ref{TraceGStructure2} follow at once from
the previous results about the pieces involved in the representation
\eqref{GDPieces} for $G_{\Dom}(\lambda)$ and the properties of the operator valued
symbol class discussed in the appendix.
In the statement of Corollary~\ref{TraceGStructure2} the scalar symbol
spaces are also anisotropic with anisotropy $m$. In particular, this means that
$\Tr \bigl(\varphi G_D(\lambda)\bigr) = O(|\lambda|^{-1})$ as $|\lambda| \to \infty$.

We are now in the position to prove the trace expansion claimed in
Theorem~\ref{ResolventTraceExpansion2}. To this end, we need the
following result (\cite[Theorem 4.4]{GKM5a}):

\begin{enumerate}
\item [{\it v})] Let $\varphi \in C^{\infty}(M;\End(E))$. If $m\ell >
n$, then
$\varphi\partial_{\lambda}^{\ell-1}B(\lambda)$ is a smooth
family of trace class operators in $x^{-m/2}L^2_b$, and the trace
$\Tr\bigl(\varphi\partial_{\lambda}^{\ell-1}B(\lambda)\bigr)$ is a
log-polyhomogeneous symbol on $\Lambda$. For large $\lambda$ we have
$$
\Tr\bigl(\varphi\partial_{\lambda}^{\ell-1}B(\lambda)\bigr) \sim
\sum\limits_{j=0}^{n-1}\alpha_{j}\lambda^{\frac{n-\ell m-j}{m}}
+ \alpha_{n}\log(\lambda) \lambda^{-\ell} + r(\lambda),
$$
where
$$
r(\lambda) \in \bigl(S_{\r}^{(-\ell m)^+} \cap S^{-\ell
m}\bigr)(\Lambda).
$$
\end{enumerate}

Now, combining {\it v}) with Corollary~\ref{TraceGStructure2}, we
finally obtain:

\begin{theorem}\label{MainThmTraceExpansion}
Let $\Lambda\subset\C$ be a closed sector. Assume that $A \in
x^{-m}\Diff^m_b(M;E)$, $m > 0$, with domain $\Dom\subset x^{-m/2}L^2_b$
satisfies the ray conditions \eqref{RayConditions}. Then $\Lambda$ is a
sector of minimal growth for $A_{\Dom}$, and for $m\ell > n$,
$(A_{\Dom}-\lambda)^{-\ell}$ is an analytic family of trace class
operators on $\Lambda_R$ for some $R>0$. Moreover, for $\varphi \in
C^{\infty}(M;\End(E))$, 
$$
\Tr\bigl(\varphi(A_{\Dom}-\lambda)^{-\ell}\bigr) \in
\bigl(S_{\r,\hol}^{(n-\ell m)^+} \cap S^{n-\ell m}\bigr)(\Lambda_R).
$$
The components have orders $\nu^+$ with $\nu \in \E$, $\nu \leq n-\ell m$, where $\E$ is the semigroup defined in \eqref{SemigroupDef}, and their phases belong to the set $\M$ defined in \eqref{PhasesMainThm}.
\end{theorem}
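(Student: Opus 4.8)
The plan is to obtain Theorem~\ref{MainThmTraceExpansion} by assembling the pieces of the resolvent representation \eqref{ResolventStructureRepeat}, differentiating $\ell-1$ times, and reading off the symbol class of the trace. First I would recall from \cite{GKM2} that the ray conditions \eqref{RayConditions} imply that $\Lambda$ is a sector of minimal growth for $A_\Dom$ and that $(A_\Dom-\lambda)^{-\ell}$ is an analytic family of trace class operators on $\Lambda_R$ for $m\ell>n$ and suitable $R$; this is the qualitative part of the statement, already established in \cite{GKM2,GKM5a}, so nothing new is needed here. The quantitative part is where Proposition~\ref{FInvStructure2} enters.

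Next I would write, using \eqref{ResolventStructureRepeat} and the formula $(A_\Dom-\lambda)^{-\ell}=\frac{1}{(\ell-1)!}\partial_\lambda^{\ell-1}(A_\Dom-\lambda)^{-1}$,
\begin{equation*}
\varphi(A_\Dom-\lambda)^{-\ell} = \frac{1}{(\ell-1)!}\varphi\partial_\lambda^{\ell-1}B(\lambda) + \frac{1}{(\ell-1)!}\varphi\partial_\lambda^{\ell-1}G_\Dom(\lambda),
\end{equation*}
take traces, and treat the two summands separately. For the first, item \textit{v}) gives directly that $\Tr(\varphi\partial_\lambda^{\ell-1}B(\lambda))$ has the stated form: the universal terms $\sum_{j=0}^{n-1}\alpha_j\lambda^{(n-\ell m-j)/m}+\alpha_n\log\lambda\,\lambda^{-\ell}$ together with a remainder in $(S_\r^{(-\ell m)^+}\cap S^{-\ell m})(\Lambda)$. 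For the second summand, Corollary~\ref{TraceGStructure2} (which is itself a consequence of Theorem~\ref{GStructure2}, hence ultimately of Theorem~\ref{AsymptOfProjB} through Propositions~\ref{FwedgeInvSymbol} and \ref{FInvStructure2}) tells us that $\Tr(\varphi G_\Dom(\lambda)) \in (S_\r^{(-m)^+}\cap S^{-m})(\Lambda_R)$ with components of orders $\nu^+$, $\nu\in\E$, $\nu\le -m$, and phases in $\M$. Differentiating $\ell-1$ times within the anisotropic symbol class of anisotropy $m$ shifts the order down by $m(\ell-1)$ and preserves the rational-plus-homogeneous structure and the sets $\E$, $\M$ (this is one of the closure properties of $S_\r^{\nu^+}$ recorded in the appendix), so $\Tr(\varphi\partial_\lambda^{\ell-1}G_\Dom(\lambda))\in (S_\r^{(-\ell m)^+}\cap S^{-\ell m})(\Lambda_R)$ with the same constraints on $\nu$ and the phases. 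Adding the two contributions, and absorbing the universal polyhomogeneous terms (whose orders $(n-\ell m-j)/m$ and $-\ell$ lie in $\E$ and whose only phase is the trivial one), yields a symbol in $(S_{\r,\hol}^{(n-\ell m)^+}\cap S^{n-\ell m})(\Lambda_R)$; the holomorphy subscript is justified because $(A_\Dom-\lambda)^{-\ell}$ is analytic in $\lambda$ on $\Lambda_R$, so the whole trace is a holomorphic function there, and one checks that the symbol estimates are compatible with holomorphy in the sense of the appendix.

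The one point requiring a little care — and the place I expect the bookkeeping to be most delicate — is verifying that the orders of all components genuinely lie in $\E$ with $\nu\le n-\ell m$, and that the phases stay within $\M$, once the universal terms from item \textit{v}) are merged with the rational-function components coming from $G_\Dom$. The universal part contributes the integer-spaced orders $n-\ell m-j$ for $j=0,\dots,n-1$ together with the logarithmic term at order $-\ell m$; all of these belong to $\E$ since $-\N_0\subset\E$. The $G_\Dom$ part contributes orders in $\E$ that can be genuinely non-integral, and after $\ell-1$ differentiations these become $\nu-m(\ell-1)$ with $\nu\in\E$, $\nu\le -m$, hence $\le -\ell m\le n-\ell m$, and still in $\E$ because $\E$ is a semigroup containing $-m\in-\N_0\cdot m$... here one must be slightly careful: differentiation in the anisotropic calculus of anisotropy $m$ shifts the scalar order by $-m$ per derivative only after accounting for the anisotropy, so the bookkeeping should be phrased as: $\partial_\lambda$ maps $S_\r^{\mu^+}$ to $S_\r^{(\mu-m)^+}$, and the component orders transform accordingly, remaining in $\E$ since $\E$ is closed under subtraction of elements of $-\N_0$ and $m\ge 0$ need not itself be rescaled — the clean way to see it is that the component structure of $G_\Dom$ in \eqref{SemigroupDef} is stable under $\partial_\lambda$ by the appendix. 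Once this is in place the theorem follows, and Theorem~\ref{ResolventTraceExpansion2} together with the refined expansion \eqref{sDExpansion} is just the component-wise unpacking of the symbol membership, using the definition of $S_{\r,\hol}^{(n-\ell m)^+}$ from the appendix and the fact that the universal terms account precisely for the $\alpha_j$ and $\alpha_n\log\lambda$ contributions, leaving $s_\Dom(\lambda)$ as the remaining symbol of order $(-\ell m)^+$.
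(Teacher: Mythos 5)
Your proposal follows the paper's route exactly: split $(A_\Dom-\lambda)^{-\ell}$ via \eqref{ResolventStructureRepeat}, apply item \textit{v}) to the $B(\lambda)$ contribution and Corollary~\ref{TraceGStructure2} (after $\ell-1$ derivatives, using Proposition~\ref{symbolprop}(4)) to the $G_\Dom(\lambda)$ contribution, and add, observing that holomorphy of the resolvent gives the $\hol$ subscript. The bookkeeping remarks about $\E$ are somewhat garbled in phrasing but the underlying point (that $\E$ contains $-\N_0$ and is closed under addition, so orders stay in $\E$ under $\partial_\lambda$) is correct and is what the paper implicitly relies on.
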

More precisely, we have the expansion
$$
\Tr \bigl(\varphi(A_{\Dom}-\lambda)^{-\ell}\bigr) \sim
\sum\limits_{j=0}^{n-1}\alpha_{j}\lambda^{\frac{n-\ell m-j}{m}}
+\alpha_{n}\log(\lambda) \lambda^{-\ell} + s_{\Dom}(\lambda)
$$
with constants $\alpha_j \in \C$ independent of the choice of domain $\Dom$, and a domain dependent remainder $s_{\Dom}(\lambda) \in \bigl(S_{\r,\hol}^{(-\ell m)^+} \cap S^{-\ell m}\bigr)(\Lambda_R)$.

If all elements of the set $\{\sigma \in \spec_b(A) : -m/2 < \Im\sigma < m/2\}$ are vertically aligned, then the coefficients $r_{\nu}$ in the expansion \eqref{sDExpansion} of $s_{\Dom}(\lambda)$ are rational functions of $\log\lambda$ only. This is because, in this case, the series representation of the projection in Theorem~\ref{AsymptOfProjB} contains only real powers of $\zeta$ and rational functions of $\log\zeta$, see Remark~\ref{RemarkAlignedB-Spec}. This simplifies the structure of $F_{\wedge,\Dom_{\wedge}}(\lambda)^{-1}$ according to Section~\ref{sec:ModelResolvent}, and consequently the structure of $F_{\Dom}(\lambda)^{-1}$ (see the proof of Proposition~\ref{FInvStructure2}). As recalled in this section, the terms coming from $B(\lambda)$ and the other pieces in the representation \eqref{GDPieces} of $G_{\Dom}(\lambda)$ do not generate phases. 

If $\Dom$ is stationary, then the expansion \eqref{sDExpansion} of $s_{\Dom}(\lambda)$ is even simpler: the $r_\nu$ are just polynomials in $\log\lambda$, and the numbers $\nu$ are all integers. To see this recall that if $\Dom_\wedge$ is $\kappa$-invariant, then $F_{\wedge,\Dom_{\wedge}}(\lambda)^{-1}$ is homogeneous, see \eqref{FwedgeInvHomogeneity}, so it belongs to the class 
$$
S^{(0)}(\Lambda_R;\Dom_{\wedge}/\Dom_{\wedge,\min},\Dom_{\wedge,\max}/\Dom_{\wedge,\min}) \!\subset\! \bigl(S_{\r}^{0^+} \cap S^{0}\bigr)
(\Lambda_R;\Dom_{\wedge}/\Dom_{\wedge,\min},\Dom_{\wedge,\max}/\Dom_{\wedge,\min}).
$$
Consequently, by the proof of Proposition~\ref{FInvStructure2}, $F_{\Dom}(\lambda)^{-1}$ is $\log$-polyhomogeneous. This property propagates throughout the rest of the results in this section and gives the structure of $s_\Dom(\lambda)$ just asserted.

\begin{appendix}
\section{A class of symbols}\label{AppendixSymbolSpaces}

Let $\Lambda \subset \C$ be a closed sector.
Let $E$ and $\tilde{E}$ be Hilbert spaces equipped with strongly continuous
group actions $\kappa_{\varrho}$ and $\tilde{\kappa}_{\varrho}$, $\varrho > 0$,
respectively. Recall that the space $S^{\nu}(\Lambda;E,\tilde{E})$ of anisotropic
operator valued symbols on the sector $\Lambda$ of order $\nu \in \R$ is defined as
the space of all $a \in C^{\infty}(\Lambda,\L(E,\tilde{E}))$ such that for all
$\alpha,\beta \in \N_0$
$$
\|\tilde{\kappa}^{-1}_{|\lambda|^{1/m}}
\partial^{\alpha}_{\lambda}\partial^{\beta}_{\bar{\lambda}}a(\lambda)
\kappa_{|\lambda|^{1/m}}\|_{\L(E,\tilde{E})} = \Oh(|\lambda|^{\nu/m - \alpha - \beta})
\textup{ as } |\lambda| \to \infty \textup{ in } \Lambda.
$$
By $S^{(\nu)}(\Lambda;E,\tilde{E})$ we denote the space of anisotropic homogeneous
functions of degree $\nu \in \R$, i.e., all $a \in
C^{\infty}(\Lambda\setminus\{0\},\L(E,\tilde{E}))$ such that
$$
a(\varrho^m\lambda) = \varrho^{\nu}
\tilde{\kappa}_{\varrho}a(\lambda)\kappa_{\varrho}^{-1} \textup{ for } \varrho > 0
\textup{ and } \lambda \in \Lambda\setminus\{0\}.
$$
Clearly $\chi(\lambda)S^{(\nu)}(\Lambda;E,\tilde{E}) \subset 
S^{\nu}(\Lambda;E,\tilde{E})$ with the obvious meaning of notation,
where $\chi \in C^{\infty}(\R^2)$ is any excision function of the origin.
When $E=\tilde{E}=\C$ equipped with the trivial group action the spaces are
dropped from the notation.

Such symbol classes were introduced by Schulze in his theory of
pseudodifferential operators on manifolds with singularities, see \cite{SchulzeNH}.
In particular, classical symbols, i.e. symbols that admit asymptotic expansions into
homogeneous components, play a prominent role, and we have used such symbols
in \cite{GKM2} for the construction of a parameter-dependent parametrix
$B(\lambda)$ to $A_{\min}-\lambda$.
As we see in the present paper, the structure of resolvents $(A_{\Dom}-\lambda)^{-1}$
for general domains $\Dom$ is rather involved, and classical symbols do not
suffice to describe that structure.
We are therefore led to introduce a new class of (anisotropic) operator valued symbols
that admit certain expansions of more general kind. As turns out, this class
occurs naturally and is well adapted to describe the structure of
resolvents in the general case.

\medskip

\noindent
Recall that $V[z_1,\ldots,z_M]$ denotes the space of polynomials in the variables
$z_j$, $j=1\ldots,M$, with coefficients in $V$ for any vector space $V$. We shall make
use of this in particular for $V = \C$ and $V = S^{(0)}(\Lambda;E,\tilde{E})$.
In what follows, all holomorphic powers and logarithms on $\open\Lambda$
are defined using a holomorphic branch of the logarithm with cut
$\Gamma \not\subset \Lambda$.

\begin{definition}\label{componentsdef}
Let $\nu \in \R$. We define $S_{\r}^{(\nu^+)}(\Lambda;E,\tilde{E})$ as the space of
all functions $s(\lambda)$ of the following form:

There exist polynomials $p \in S^{(0)}(\Lambda;E,\tilde{E})[z_1,\ldots,z_{N+1}]$
and $q \in \C[z_1,\ldots,z_{N+1}]$ in $N+1$ variables, $N=N(s) \in \N_0$,
and real numbers $\mu_{k}=\mu_k(s)$, $k = 1,\ldots,N$, such that the following holds:
\begin{enumerate}[(a)]
\item $|q(\lambda^{i\mu_{1}},\ldots,\lambda^{i\mu_{N}},\log\lambda)|
\geq c > 0$ for $\lambda \in \Lambda$ with $|\lambda|$ sufficiently large;
\item $s(\lambda) = r(\lambda)\lambda^{\nu/m}$, where
\begin{equation}\label{Rationalcoeff}
r(\lambda) =
\frac{p(\lambda^{i\mu_{1}},\ldots,\lambda^{i\mu_{N}},\log\lambda)}{q(\lambda^{i\mu_{1}},\ldots,\lambda^{i\mu_{N}},\log\lambda)}.
\end{equation}
To clarify the notation, we note that
$$
p(\lambda^{i\mu_{1}},\ldots,\lambda^{i\mu_{N}},\log\lambda) =
\sum_{|\alpha|+k \leq M}a_{\alpha,k}(\lambda)\lambda^{i\mu_1\alpha_1}\cdots
\lambda^{i\mu_N\alpha_N}\log^k\lambda
$$
as a function $\Lambda\setminus\{0\} \to \L(E,\tilde{E})$ with certain
$a_{\alpha,k}(\lambda) \in S^{(0)}(\Lambda;E,\tilde{E})$.
\end{enumerate}
We call the $\mu_k$ the phases and $\nu^+$ the order of $s(\lambda)$.

Every $s(\lambda) \in S_{\r}^{(\nu^+)}(\Lambda;E,\tilde{E})$ is an operator function
defined everywhere on $\Lambda$ except at $\lambda = 0$ and the zero set of
$q(\lambda^{i\mu_{1}},\ldots,\lambda^{i\mu_{N}},\log\lambda)$.
The latter is a discrete subset of $\Lambda\setminus\{0\}$, and it is finite
outside any neighborhood of zero in view of property (a).
\end{definition}

\begin{proposition}\label{componentprop}
\begin{enumerate}
\item $S_{\r}^{(\nu^+)}(\Lambda;E,\tilde{E})$ is a vector space.
\item Let $\hat{E}$ be a third Hilbert space with group action $\hat{\kappa}_{\varrho}$,
$\varrho > 0$. Composition of operator functions induces a map
$$
S_{\r}^{(\nu_1^+)}(\Lambda;\tilde{E},\hat{E})\times
S_{\r}^{(\nu_2^+)}(\Lambda;E,\tilde{E}) \to
S_{\r}^{((\nu_1+\nu_2)^+)}(\Lambda;E,\hat{E}).
$$
\item For $\alpha,\beta \in \N_0$ we have
$$
\partial_{\lambda}^{\alpha}\partial_{\bar{\lambda}}^{\beta} :
S_{\r}^{(\nu^+)}(\Lambda;E,\tilde{E}) \to
S_{\r}^{((\nu-m\alpha-m\beta)^+)}(\Lambda;E,\tilde{E}).
$$
\item Let $s(\lambda) \in S_{\r}^{(\nu^+)}(\Lambda;E,\tilde{E})$. Then
$$
\chi(\lambda)s(\lambda) \in S^{\nu+\eps}(\Lambda;E,\tilde{E})
$$
for any $\eps > 0$ and any excision function $\chi \in C^{\infty}(\R^2)$ of
the set where $s(\lambda)$ is undefined.
\item Let $s(\lambda) \in S_{\r}^{(\nu^+)}(\Lambda;E,\tilde{E})$ and assume
that
$$
\|\tilde{\kappa}_{|\lambda|^{1/m}}^{-1}s(\lambda)
\kappa_{|\lambda|^{1/m}}\|_{\L(E,\tilde{E})} = \Oh(|\lambda|^{\nu/m-\eps})
$$
as $|\lambda| \to \infty$ for some $\eps > 0$. Then $s(\lambda) \equiv 0$
on $\Lambda$.

In particular,
$S_{\r}^{(\nu_1^+)}(\Lambda;E,\tilde{E}) \cap S_{\r}^{(\nu_2^+)}(\Lambda;E,\tilde{E})
= \{0\}$ whenever $\nu_1 \neq \nu_2$.
\end{enumerate}
\end{proposition}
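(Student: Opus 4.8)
The plan is to reduce (1)--(4) to direct bookkeeping with the rational representation $s=r\lambda^{\nu/m}$, $r=p/q$, of Definition~\ref{componentsdef}, using two elementary facts about the homogeneous classes $S^{(\nu)}(\Lambda;E,\tilde E)$: composition maps $S^{(\nu_1)}(\Lambda;\tilde E,\hat E)\times S^{(\nu_2)}(\Lambda;E,\tilde E)$ into $S^{(\nu_1+\nu_2)}(\Lambda;E,\hat E)$, and $\partial_\lambda,\partial_{\bar\lambda}$ map $S^{(\nu)}$ into $S^{(\nu-m)}$, so that in particular multiplication by the scalar symbol $\lambda\in S^{(m)}(\Lambda)$ sends $S^{(-m)}$ into $S^{(0)}$. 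For (1), given $s_1,s_2\in S_\r^{(\nu^+)}$, I would first rewrite each $p_i,q_i$ as a polynomial in the joint list of variables $\lambda^{i\mu_k}$ (with $\mu_k$ running over the union of the two phase lists) and $\log\lambda$, then put $r_1+r_2$ over the common denominator $q_1q_2$; property (a) survives since $|q_1q_2|\ge c_1c_2>0$, and closure under scalar multiples is immediate. For (2), the same reorganization of variables applies and $r_1r_2=(p_1p_2)/(q_1q_2)$: the coefficients of $p_1p_2$ are compositions of the $S^{(0)}$-valued coefficients of $p_1,p_2$, hence again lie in $S^{(0)}$, while $q_1q_2$ is bounded below and $\lambda^{\nu_1/m}\lambda^{\nu_2/m}=\lambda^{(\nu_1+\nu_2)/m}$.

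For (3) it suffices to treat one $\partial_\lambda$ and one $\partial_{\bar\lambda}$ and iterate. From $s=r\lambda^{\nu/m}$ one computes $\partial_\lambda s=\bigl(\lambda\,\partial_\lambda r+\tfrac{\nu}{m}r\bigr)\lambda^{(\nu-m)/m}$ and $\partial_{\bar\lambda}s=\bigl(\lambda\,\partial_{\bar\lambda}r\bigr)\lambda^{(\nu-m)/m}$, so the task is to check that $\lambda\,\partial_\lambda r+\tfrac\nu m r$ and $\lambda\,\partial_{\bar\lambda}r$ again have the shape ``polynomial in $\lambda^{i\mu_k},\log\lambda$ with $S^{(0)}$ coefficients, divided by a bounded-below $\C$-polynomial in the same variables, with the same phases $\mu_k$''. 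Indeed $\partial_\lambda\lambda^{i\mu_k}=i\mu_k\lambda^{-1}\lambda^{i\mu_k}$ and $\partial_\lambda\log\lambda=\lambda^{-1}$, so applying $\lambda\,\partial_\lambda$ to $p=\sum_{\alpha,k} a_{\alpha,k}(\lambda)\lambda^{i\mu\cdot\alpha}\log^k\lambda$ yields a polynomial of the same form whose coefficients are $\lambda\,\partial_\lambda a_{\alpha,k}\in S^{(0)}$ (because $\partial_\lambda a_{\alpha,k}\in S^{(-m)}$) plus $S^{(0)}$-combinations of the $a_{\alpha,k}$; likewise $\lambda\,\partial_\lambda q$ is a $\C$-polynomial in $\lambda^{i\mu_k},\log\lambda$, while $\partial_{\bar\lambda}q=0$ since $q$ has constant coefficients and $\lambda^{i\mu_k},\log\lambda$ are holomorphic. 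Assembling $\lambda\,\partial_\lambda r=\lambda(\partial_\lambda p\cdot q-p\,\partial_\lambda q)/q^2$ and combining with $\tfrac\nu m r=\tfrac\nu m pq/q^2$ gives denominator $q^2$ (still bounded below) and numerator a polynomial with $S^{(0)}$ coefficients; the $\partial_{\bar\lambda}$ case keeps denominator $q$.

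For (4), I would use that on the sector $\Lambda$ each $\lambda^{i\mu_k}=e^{i\mu_k\log\lambda}$ has modulus $e^{-\mu_k\arg\lambda}$, bounded because $\arg\lambda$ lies in a bounded interval, and $|\log\lambda|=\Oh(\log|\lambda|)=\Oh(|\lambda|^\eps)$ for every $\eps>0$; after conjugation by $\tilde\kappa^{-1}_{|\lambda|^{1/m}}(\cdot)\kappa_{|\lambda|^{1/m}}$ each $S^{(0)}$ coefficient $a_{\alpha,k}(\lambda)$ becomes the bounded function $a_{\alpha,k}(\lambda/|\lambda|)$, so together with property (a) one obtains $\|\tilde\kappa^{-1}_{|\lambda|^{1/m}}s(\lambda)\kappa_{|\lambda|^{1/m}}\|=\Oh(|\lambda|^{\nu/m+\eps})$. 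The symbol estimates for the derivatives of $\chi s$ then follow by Leibniz: terms in which a derivative lands on $\chi$ are compactly supported, hence negligible as $|\lambda|\to\infty$, and the remaining term reduces, via (3), to the order-zero estimate applied to an element of $S_\r^{((\nu-m\alpha-m\beta)^+)}$. Replacing $\eps$ suitably, this is precisely $\chi s\in S^{\nu+\eps}(\Lambda;E,\tilde E)$.

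Part (5) is the crux, and I expect it to be the main obstacle. Suppose $s\in S_\r^{(\nu^+)}$ decays like $\Oh(|\lambda|^{\nu/m-\eps})$; since $|\lambda^{\nu/m}|=|\lambda|^{\nu/m}$, this says the conjugated rational part $\tilde r(\lambda):=\tilde\kappa^{-1}_{|\lambda|^{1/m}}r(\lambda)\kappa_{|\lambda|^{1/m}}$ is $\Oh(|\lambda|^{-\eps})$, and by property (a) the conjugated numerator $\tilde p(\lambda):=\tilde\kappa^{-1}_{|\lambda|^{1/m}}p(\lambda^{i\mu_1},\dots,\lambda^{i\mu_N},\log\lambda)\kappa_{|\lambda|^{1/m}}$ tends to $0$ as $|\lambda|\to\infty$. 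Restricting to a ray $\lambda=|\lambda|e^{i\vartheta}$ and setting $x=\log|\lambda|$, homogeneity of degree $0$ turns $a_{\alpha,k}(\lambda)$, after conjugation, into the fixed operator $A^\vartheta_{\alpha,k}=a_{\alpha,k}(e^{i\vartheta})$, so that $\tilde p(|\lambda|e^{i\vartheta})=\sum_{\alpha,k}A^\vartheta_{\alpha,k}e^{-\vartheta(\mu\cdot\alpha)}e^{i(\mu\cdot\alpha)x}(x+i\vartheta)^k$, an operator-valued exponential polynomial in the real variable $x$ with purely imaginary exponents and polynomial amplitudes. The plan is to prove that any such function tending to $0$ as $x\to+\infty$ must vanish identically: grouping the terms by frequency and dividing by the highest power of $x$ present reduces this to the fact that a finite sum $\sum_\omega d_\omega e^{i\omega x}$ with distinct real frequencies $\omega$ which tends to $0$ has all $d_\omega=0$, which I would derive from $\frac1T\int_0^T e^{i(\omega-\omega_0)x}\,dx\to\delta_{\omega,\omega_0}$ together with $\frac1T\int_0^T(\cdot)\,dx\to0$ (applied to matrix entries against test vectors); peeling off successive powers of $x$ and successive frequencies then forces $\tilde p(|\lambda|e^{i\vartheta})\equiv0$ for every $\vartheta$, hence $p(\lambda^{i\mu_1},\dots,\lambda^{i\mu_N},\log\lambda)\equiv0$ on $\Lambda$, hence $s\equiv0$. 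The delicate point is that distinct multi-indices $\alpha$ can produce the same frequency $\mu\cdot\alpha$, so one must conclude directly that the \emph{function} $p(\lambda^{i\mu},\log\lambda)$ vanishes rather than that the polynomial $p$ does. Finally, the ``in particular'' assertion follows by applying (4) to an element of $S_\r^{(\nu_1^+)}\cap S_\r^{(\nu_2^+)}$ with, say, $\nu_1<\nu_2$: taking $\eps$ small enough in (4) gives decay of some order $\nu_2/m-\eps'$ with $\eps'>0$, whence (5) forces that element to be $0$.
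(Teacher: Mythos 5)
Your handling of parts (1)--(4) is sound and, though more explicit, is essentially the route the paper takes: it dismisses (1) and (2) as obvious, and for (3) records simply that $\partial_\lambda^\alpha\partial_{\bar\lambda}^\beta$ sends $S^{(\nu_0)}$ to $S^{(\nu_0-m\alpha-m\beta)}$ and hence acts on the coefficient polynomials, with (4) then following from the lower bound on $q$ and the fact that $\log\lambda$ is $O(|\lambda|^\eps)$. Your worked-out Leibniz computations in (3) and (4) fill in what the paper leaves implicit, and they are correct.

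For the crux, part (5), you take a genuinely different route. The paper first reduces to $\nu=0$, $q\equiv 1$, and no logarithmic terms; then it replaces the phases $\mu_1,\dots,\mu_N$ by a rationally independent set $\tilde\mu_1,\dots,\tilde\mu_K$ (by clearing denominators), so that along each ray the conjugated numerator becomes a genuine polynomial $p_0$ on the $N$-torus evaluated along the curve $\varrho\mapsto(\varrho^{i\tilde\mu_1},\dots,\varrho^{i\tilde\mu_K})$, whose image is dense in the torus; the decay hypothesis then forces $p_0$ to be small on a dense set and hence zero. You instead avoid the reorganization to rationally independent phases entirely: working on a fixed ray with $x=\log|\lambda|$, you view the conjugated numerator as an operator-valued exponential polynomial $\sum_{\omega,j}\tilde d_{\omega,j}e^{i\omega x}x^j$ (collecting distinct actual frequencies $\omega=\mu\cdot\alpha$), divide by the highest power of $x$, and invoke Ces\`aro averaging $\frac1T\int_0^T e^{i(\omega-\omega_0)x}\,dx\to\delta_{\omega,\omega_0}$ together with $\frac1T\int_0^T G(x)\,dx\to 0$ for $G\to 0$ to kill the leading coefficients, then iterate downward in $j$. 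This handles the logarithmic terms and the possible coincidence of frequencies $\mu\cdot\alpha$ in one uniform loop, whereas the paper silently appeals to the same peeling when it says one may ``without loss of generality'' drop logs. Your observation that this argument only shows the \emph{function} $p(\lambda^{i\mu},\log\lambda)$ vanishes (not the polynomial $p$) is exactly right and exactly what is needed, since the statement being proved is $s\equiv 0$. Both approaches are correct; yours buys a slightly more elementary, self-contained argument at the cost of a more explicit iteration, while the paper's buys a cleaner one-shot conclusion at the cost of the reduction to rationally independent phases.
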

\begin{proof}
(1) and (2) are obvious. For (3) note that
$$
\partial_{\lambda}^{\alpha}\partial_{\bar{\lambda}}^{\beta} :
S^{(\nu_0)}(\Lambda;E,\tilde{E}) \to S^{(\nu_0-m\alpha-m\beta)}(\Lambda;E,\tilde{E})
$$
for any $\nu_0$. Consequently,
$\partial_{\lambda}^{\alpha}\partial_{\bar{\lambda}}^{\beta}$ acts in the spaces
\begin{align*}
S^{(\nu_0)}(\Lambda;E,\tilde{E})[\lambda^{i\mu_1},\ldots,\lambda^{i\mu_N},\log\lambda]
&\to
S^{(\nu_0-m\alpha-m\beta)}(\Lambda;E,\tilde{E})[\lambda^{i\mu_1},\ldots,\lambda^{i\mu_N},\log\lambda] \\
\C[\lambda^{i\mu_1},\ldots,\lambda^{i\mu_N},\log\lambda] &\to
S^{(-m\alpha-m\beta)}(\Lambda)[\lambda^{i\mu_1},\ldots,\lambda^{i\mu_N},\log\lambda]
\end{align*}
with the obvious meaning of notation (the latter is a special case of the
former in view of $\C \subset S^{(0)}(\Lambda)$).
(3) is an immediate consequence of these observations.

(4) follows at once in view of property (a) in Definition~\ref{componentsdef} (and
using (3) to estimate higher derivatives).
Note also that, for large $\lambda$, the numerator in \eqref{Rationalcoeff} can be
regarded as a polynomial in $\log\lambda$ of operator valued symbols of order zero.

In the proof of (5) we may without loss of generality assume that $\nu = 0$, so
$s(\lambda)$ is of the form \eqref{Rationalcoeff}. Since
$|q(\lambda^{i\mu_{1}},\ldots,\lambda^{i\mu_{N}},\log\lambda)| = \Oh(\log^M|\lambda|)$
as $|\lambda| \to \infty$ we see that it is sufficient to consider the case
$q \equiv 1$, so
$s(\lambda) = p(\lambda^{i\mu_{1}},\ldots,\lambda^{i\mu_{N}},\log\lambda)$.
For this case we will prove that if
$$
\|\tilde{\kappa}_{|\lambda|^{1/m}}^{-1}s(\lambda)
\kappa_{|\lambda|^{1/m}}\|_{\L(E,\tilde{E})} \to 0
$$
as $|\lambda| \to \infty$, then $s(\lambda) \equiv 0$ on $\Lambda$.
For this proof we can without loss of generality further assume that
$s(\lambda)$ contains no logarithmic terms, so we have $s(\lambda) =
p(\lambda^{i\mu_{1}},\ldots,\lambda^{i\mu_{N}})$. Moreover, we can assume that
the numbers $\mu_1,\ldots,\mu_N \in \R$ are independent over the rationals,
for if this is not the case we can choose rationally independent numbers
$\tilde{\mu}_1,\ldots, \tilde{\mu}_K \in \R$ such that
$\mu_j = \sum_{k=1}^K z_{jk}\tilde{\mu}_k$ with coefficients $z_{jk} \in \Z$, and so
$$
\lambda^{i\mu_j} = \prod_{k=1}^K\bigl(\lambda^{i\tilde{\mu}_k}\bigr)^{z_{jk}}
$$
for every $j = 1,\ldots,N$. Consequently, there are numbers $N_j \in \N$,
$j=1,\ldots,K$, and a polynomial $\tilde{p} \in
S^{(0)}(\Lambda;E,\tilde{E})[z_1,\ldots,z_K]$ such that
$$
\lambda^{i\tilde{\mu}_1N_1}\cdots\lambda^{i\tilde{\mu}_KN_K}
p(\lambda^{i\mu_{1}},\ldots,\lambda^{i\mu_{N}}) =
\tilde{p}(\lambda^{i\tilde{\mu}_{1}},\ldots,\lambda^{i\tilde{\mu}_{K}}),
$$
and both assertion and assumption are valid for $p$ if and only if they hold
for $\tilde{p}$. So we can indeed assume that the numbers $\mu_j$, $j = 1,\ldots,N$,
are independent over the rationals.

Now let $\lambda_0 \in \Lambda$ be arbitrary with $|\lambda_0|=1$, and consider the
function $f : (0,\infty) \to \L(E,\tilde{E})$ defined by
$$
f(\varrho) = \tilde{\kappa}_{\varrho}^{-1}p(\varrho^{im\mu_1}\lambda_0^{i\mu_1},
\ldots,\varrho^{im\mu_N}\lambda_0^{i\mu_N})\kappa_{\varrho}.
$$
This function is of the form
$$
f(\varrho) = \sum_{|\alpha| \leq M}a_{\alpha}(\varrho^{i\mu_1})^{\alpha_1}\cdots
(\varrho^{i\mu_N})^{\alpha_N}
$$
for certain $a_{\alpha} \in \L(E,\tilde{E})$, and by assumption
$\|f(\varrho)\|_{\L(E,\tilde{E})} \to 0$ as $\varrho \to \infty$.
Let $p_0(z) = \sum_{|\alpha| \leq M}a_{\alpha}z^{\alpha}$, $z = (z_1,\ldots,z_N) \in
\C^N$, and consider the curve
$$
\varrho \mapsto (\varrho^{i\mu_1},\ldots,\varrho^{i\mu_N}) \in
{\mathbb S}^1\times\ldots\times{\mathbb S}^1
$$
on the $N$-torus. The image of this curve for $\varrho > \varrho_0$ is a dense
subset of the $N$-torus, where $\varrho_0 > 0$ can be chosen arbitrarily, because the
$\mu_j$ are independent over the rationals.
The function $f$ is merely the operator polynomial $p_0(z)$ restricted to that curve.
Since $f(\varrho) \to 0$ as $\varrho \to \infty$, this implies that for any $\eps > 0$
we have $\|p_0(z)\| < \eps$ for all $z$ in a dense subset of the $N$-torus. This
shows that $p_0(z)$ is the zero polynomial, and so the function
$f(\varrho) = 0$ for all $\varrho > 0$.

Consequently, the function $p(\lambda^{i\mu_{1}},\ldots,\lambda^{i\mu_{N}})$
vanishes along the ray through $\lambda_0$, and because $\lambda_0$ was arbitrary
the proof is complete.
\end{proof}

\begin{definition}\label{symboldefinition}
For $\nu \in \R$ define $S^{\nu^+}_{\r}(\Lambda;E,\tilde{E})$ as the space of all
operator valued symbols $a(\lambda)$ that admit an asymptotic expansion
\begin{equation}\label{AsymptoticDefSymbol}
a(\lambda) \sim \sum_{j=0}^{\infty}\chi_j(\lambda)s_j(\lambda),
\end{equation}
where $s_j(\lambda) \in S^{(\nu_j^+)}_{\r}(\Lambda;E,\tilde{E})$,
$\nu=\nu_0 > \nu_1 > \ldots$ and $\nu_j \to -\infty$ as $j \to \infty$, and
$\chi_j(\lambda)$ is a suitable excision function of the set where
$s_j(\lambda)$ is undefined.

We call $s_j(\lambda)$ the component of order $\nu_j^+$ of $a(\lambda)$.
The components are uniquely determined by the symbol $a(\lambda)$
(see Proposition~\ref{symbolprop}).
\end{definition}

Familiar symbol classes like classical (polyhomogeneous) symbols,
symbols that admit asymptotic expansions into homogeneous components
of complex degrees, or log-polyhomogeneous symbols are all particular
cases of the class defined in Definition~\ref{symboldefinition}.
In particular, the denominators $q$ in \eqref{Rationalcoeff}
are equal to one in all those cases.

Of particular interest in the context of this paper are symbols $a(\lambda)$ with the
property that all components $s_j(\lambda)$ have orders $\nu_j^+$
with $\nu_j \in \E$, the semigroup defined in \eqref{SemigroupDef}, and phases
in the set $\M$ defined in \eqref{PhasesMainThm}.

\begin{proposition}\label{symbolprop}
\begin{enumerate}
\item $S_{\r}^{\nu^+}(\Lambda;E,\tilde{E})$ is a vector space. For any
$\eps > 0$ we have the inclusion
$S_{\r}^{\nu^+}(\Lambda;E,\tilde{E}) \subset S^{\nu+\eps}(\Lambda;E,\tilde{E})$.
\item Let $a(\lambda) \in S_{\r}^{\nu^+}(\Lambda;E,\tilde{E})$. The components
$s_j(\lambda)$ in \eqref{AsymptoticDefSymbol} are uniquely determined by $a(\lambda)$.
\item Let $\hat{E}$ be a third Hilbert space with group action $\hat{\kappa}_{\varrho}$,
$\varrho > 0$. Composition of operator functions induces a map
$$
S_{\r}^{\nu_1^+}(\Lambda;\tilde{E},\hat{E})\times
S_{\r}^{\nu_2^+}(\Lambda;E,\tilde{E}) \to
S_{\r}^{(\nu_1+\nu_2)^+}(\Lambda;E,\hat{E}).
$$
The components of the composition of two symbols are obtained by formally multiplying
the asymptotic expansions \eqref{AsymptoticDefSymbol} of the factors.
\item For $\alpha,\beta \in \N_0$ we have
$$
\partial_{\lambda}^{\alpha}\partial_{\bar{\lambda}}^{\beta} :
S_{\r}^{\nu^+}(\Lambda;E,\tilde{E}) \to
S_{\r}^{(\nu-m\alpha-m\beta)^+}(\Lambda;E,\tilde{E}).
$$
If $s_j(\lambda)$ are the components of $a(\lambda) \in
S_{\r}^{\nu^+}(\Lambda;E,\tilde{E})$, then
$\partial_{\lambda}^{\alpha}\partial_{\bar{\lambda}}^{\beta}s_j(\lambda)$ are
the components of
$\partial_{\lambda}^{\alpha}\partial_{\bar{\lambda}}^{\beta}a(\lambda)$.
\item Let $a_j(\lambda) \in S_{\r}^{\nu_j^+}(\Lambda;E,\tilde{E})$, $\nu_j \to -\infty$
as $j \to \infty$, and let $\bar{\nu} = \max\nu_j$.
Let $a(\lambda)$ be an operator valued symbol such that
$a(\lambda) \sim \sum_{j=0}^{\infty}a_j(\lambda)$.

Then $a(\lambda) \in
S_{\r}^{{\bar{\nu}}^+}(\Lambda;E,\tilde{E})$, and the component of $a(\lambda)$
of order $M^+$ is obtained by adding the components of that order
of the $a_j(\lambda)$. This is a finite sum for each $M \leq \bar{\nu}$ and
will yield a nontrivial result for at most countably many values of $M$ that form a
sequence tending to $-\infty$.
\end{enumerate}
\end{proposition}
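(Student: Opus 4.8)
The plan is to obtain Proposition~\ref{symbolprop} as a formal consequence of Proposition~\ref{componentprop}, which already supplies everything needed at the level of a single rational--homogeneous component: each $S_{\r}^{(\nu^+)}(\Lambda;E,\tilde E)$ is a vector space, composition sends $S_{\r}^{(\nu_1^+)}\times S_{\r}^{(\nu_2^+)}$ into $S_{\r}^{((\nu_1+\nu_2)^+)}$, the operators $\partial_\lambda^\alpha\partial_{\bar\lambda}^\beta$ lower the order by $m(\alpha+\beta)$, one has $\chi(\lambda)S_{\r}^{(\nu^+)}\subset S^{\nu+\eps}$, and --- the only genuinely nontrivial input --- a nonzero element of $S_{\r}^{(\nu^+)}$ cannot satisfy a symbol estimate of order strictly below $\nu$. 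One standing remark, made once and then suppressed, is that the excision functions $\chi_j$ and the exceptional sets where the $s_j$ are undefined are irrelevant to every assertion: all estimates are taken for $|\lambda|\to\infty$, the exceptional sets are finite there, and two excisions of the same set differ by a compactly supported function.

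For (1), scalar multiples are immediate, and for $a+b$ one merges the two order sequences (each discrete and tending to $-\infty$) and adds the two components at a coinciding order, which is legitimate because $S_{\r}^{(\nu^+)}$ is a vector space. The inclusion $S_{\r}^{\nu^+}\subset S^{\nu+\eps}$ follows by truncating: $a-\sum_{j<J}\chi_js_j\in S^{\nu_J+\eps}$ and each $\chi_js_j\in S^{\nu_j+\eps}$, and both orders are $\le\nu$. For (2), suppose $a$ has two component expansions and let $\mu$ be the largest order at which they disagree; subtracting from $a$ the (agreeing) parts of order $>\mu$ and comparing the two truncations shows that $\chi(\lambda)\bigl(s_\mu^{(1)}-s_\mu^{(2)}\bigr)$ coincides for large $|\lambda|$ with an element of $S^{\mu-\delta+\eps}$, where $\delta>0$ is the gap to the next order in the combined sequence. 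Since $s_\mu^{(1)}-s_\mu^{(2)}\in S_{\r}^{(\mu^+)}$, choosing $\eps$ small and invoking part~(5) of Proposition~\ref{componentprop} forces $s_\mu^{(1)}=s_\mu^{(2)}$, contradicting the choice of $\mu$.

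Parts (3) and (4) are the usual termwise arguments. For composition, if $a\sim\sum_j\chi_js_j$ and $b\sim\sum_k\chi_k't_k$, each product $s_jt_k$ lies in $S_{\r}^{((\nu_{1,j}+\nu_{2,k})^+)}$; for a fixed total order $M$ only finitely many pairs $(j,k)$ contribute (both order sequences go to $-\infty$), and their sum is a single component of that order by~(1). One then uses the standard decomposition $ab=(a-\sum_{j<J}\chi_js_j)b+(\sum_{j<J}\chi_js_j)(b-\sum_{k<K}\chi_k't_k)+\sum_{j<J,k<K}\chi_j\chi_k's_jt_k$ together with the composition rule $S^{\mu_1}\cdot S^{\mu_2}\subset S^{\mu_1+\mu_2}$ for ordinary operator valued symbols to check that $ab$ has the asserted expansion, and part~(2) identifies its components. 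Differentiation is the same but easier: $\partial_\lambda^\alpha\partial_{\bar\lambda}^\beta(\chi_js_j)=\chi_j\,\partial_\lambda^\alpha\partial_{\bar\lambda}^\beta s_j$ up to a compactly supported term, all orders drop by the same $m(\alpha+\beta)$, and~(2) again pins down the components.

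For (5) I would expand each $a_j\sim\sum_i\chi_{j,i}s_{j,i}$ with $s_{j,i}\in S_{\r}^{(\mu_{j,i}^+)}$ and $\nu_j=\mu_{j,0}>\mu_{j,1}>\cdots$, and then regroup by order. For a given real $M$, the relation $\mu_{j,i}=M$ forces $\nu_j\ge M$, so only finitely many $j$ qualify (as $\nu_j\to-\infty$) and each contributes at most one index $i$; hence $\sum_{\mu_{j,i}=M}s_{j,i}$ is a finite sum, lies in $S_{\r}^{(M^+)}$ by~(1), and the same counting shows that the set of orders $M$ that actually occur forms a sequence tending to $-\infty$. That $a$ has precisely this regrouped family as its component expansion is the one point that needs genuine care: one runs a diagonal estimate combining the hypothesis (that $a-\sum_{j<J}a_j$ has order tending to $-\infty$ as $J\to\infty$) with the component remainders of the finitely many retained $a_j$, $j<J$, to conclude that for each $R>0$ the difference of $a$ and the partial sum of its components of order $>-R$ lies in $S^{-R+\eps}$. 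I expect this bookkeeping --- collapsing a doubly indexed family of components into a single sequence tending to $-\infty$ and controlling the diagonal --- to be the only real obstacle; all of the conceptual content is carried by the rigidity statement Proposition~\ref{componentprop}(5), which is exactly what makes the components well defined.
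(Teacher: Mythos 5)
Your proof is correct and follows essentially the same route as the paper: everything is reduced to Proposition~\ref{componentprop}, with the rigidity statement in part~(5) of that proposition doing all the real work, exactly as in the paper's own argument for uniqueness. The paper only writes out (2) in detail (phrased as an induction showing that a formal expansion of $0$ has all components zero, which is equivalent to your "largest order of disagreement" contradiction) and declares (1), (3)--(5) standard; you supply that bookkeeping explicitly, but the conceptual content is identical.
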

\begin{proof}
Everything follows from Proposition~\ref{componentprop} and standard arguments.
Because of its importance we will, however, prove (2):

To this end, assume that $0 \sim \sum_{j=0}^{\infty}\chi_j(\lambda)s_j(\lambda)$
with $s_j(\lambda) \in S^{(\nu_j^+)}_{\r}(\Lambda;E,\tilde{E})$,
$\nu_j > \nu_{j+1} \to -\infty$ as $j \to \infty$. We need to prove that all
$s_j(\lambda)$ are zero. Because
$$
\chi_0(\lambda)s_0(\lambda) \sim -\sum_{j=1}^{\infty}\chi_j(\lambda)s_j(\lambda)
$$
we see that $\chi_0(\lambda)s_0(\lambda) \in S^{\nu_1+\eps}(\Lambda;E,\tilde{E})$
for every $\eps > 0$. Choose $\eps > 0$ such that $\nu_1 + \eps < \nu_0$. Then
$\|\tilde{\kappa}^{-1}_{|\lambda|^{1/m}}\chi_0(\lambda)s_0(\lambda)
\kappa_{|\lambda|^{1/m}}\|_{\L(E,\tilde{E})} = \Oh(|\lambda|^{(\nu_1+\eps)/m})$
as $|\lambda| \to \infty$, and by Proposition~\ref{componentprop}(5) we
obtain that $s_0(\lambda) \equiv 0$ on $\Lambda$. Consequently all $s_j(\lambda)$
are zero by induction, and (2) is proved.
\end{proof}

By $S_{\r,\hol}^{\nu^+}(\Lambda;E,\tilde{E})$ we denote the class of
symbols $a(\lambda) \in S_{\r}^{\nu^+}(\Lambda;E,\tilde{E})$ that
are holomorphic in $\open\Lambda$.
Let $s_j(\lambda)$ be the components of $a(\lambda) \in
S_{\r,\hol}^{\nu^+}(\Lambda;E,\tilde{E})$.
By Proposition~\ref{symbolprop}, $\partial_{\bar{\lambda}}s_j(\lambda)$ are
the components of $\partial_{\bar{\lambda}}a(\lambda) \equiv 0$,
and consequently all components $s_j(\lambda)$ are holomorphic.

In the case of holomorphic scalar symbols (or, more generally, holomorphic
operator valued symbols with trivial group actions), we can improve the
description of the components as follows.

\begin{proposition}\label{ScalarSymbolStructure}
Let $a(\lambda) \in S_{\r,\hol}^{\nu^+}(\Lambda)$,
$a(\lambda) \sim \sum_{j=0}^{\infty}\chi_j(\lambda)s_j(\lambda)$
with components $s_j(\lambda)$ of order $\nu_j^+$.

For every $j \in \N_0$ there exist polynomials $p_j,q_j \in \C[z_1,\ldots,z_{N_j+1}]$
in $N_j+1$ variables with constant coefficients, $N_j \in \N_0$,
and real numbers $\mu_{jk}$, $k = 1,\ldots,N_j$, such that the following holds:
\begin{enumerate}[(a)]
\item $|q_j(\lambda^{i\mu_{j1}},\ldots,\lambda^{i\mu_{jN_j}},\log\lambda)|
\geq c_j > 0$ for $\lambda \in \Lambda$ with $|\lambda|$ sufficiently large;
\item $s_j(\lambda) = r_j(\lambda^{i\mu_{j1}},\ldots,\lambda^{i\mu_{jN_j}},\log\lambda)
\lambda^{\nu_j/m}$, where $r_j = p_j/q_j$.
\end{enumerate}
\end{proposition}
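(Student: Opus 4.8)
The plan is to argue one component at a time. Fix $j$ and recall from Definition~\ref{componentsdef} that $s_j(\lambda)=r_j(\lambda)\lambda^{\nu_j/m}$ with $r_j=p_j/q_j$, where the denominator $q_j\in\C[z_1,\dots,z_{N_j+1}]$ already has constant coefficients while the numerator, evaluated, has the form $p_j(\lambda^{i\mu_1},\dots,\lambda^{i\mu_N},\log\lambda)=\sum_{\alpha,k}a_{\alpha,k}(\lambda)\,\lambda^{i\mu\cdot\alpha}(\log\lambda)^k$, with $N=N_j$, phases $\mu=(\mu_1,\dots,\mu_N)$, notation $\mu\cdot\alpha:=\sum_l\alpha_l\mu_l$, and coefficients $a_{\alpha,k}\in S^{(0)}(\Lambda)$ (scalars, trivial actions). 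So the only thing to prove is that the numerator can be rewritten with \emph{constant} coefficients, keeping the \emph{same} phases $\mu_k$ and the \emph{same} $q_j$. As observed just before the statement, each $s_j$ is holomorphic where defined; since by property~(a) the zeros of $q_j(\lambda^{i\mu_1},\dots,\log\lambda)$ lie in a bounded set, $s_j$ is holomorphic on $\set{\lambda\in\open\Lambda:|\lambda|>R}$ for $R$ large. I will also use the elementary fact that a \emph{holomorphic} $b\in S^{(0)}(\Lambda)$ (scalar, trivial actions) is constant: homogeneity of degree $0$ gives $b(r\lambda)=b(\lambda)$ for all $r>0$, so $0=\tfrac{d}{dr}\big|_{r=1}b(r\lambda)=\lambda\,b'(\lambda)$, whence $b'\equiv0$ on the connected open set $\open\Lambda$.

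The main step is to differentiate in $\bar\lambda$. From $q_j(\lambda^{i\mu_1},\dots,\log\lambda)\,\lambda^{-\nu_j/m}\,s_j(\lambda)=p_j(\lambda^{i\mu_1},\dots,\log\lambda)$, valid for $|\lambda|>R$, the left side is killed by $\partial_{\bar\lambda}$ there (the factors $q_j(\dots)$ and $\lambda^{-\nu_j/m}$ are holomorphic and $\partial_{\bar\lambda}s_j=0$), while on the right $\partial_{\bar\lambda}$ hits only the coefficients, since $\lambda^{i\mu\cdot\alpha}$ and $(\log\lambda)^k$ are holomorphic; hence $\sum_{\alpha,k}(\partial_{\bar\lambda}a_{\alpha,k})(\lambda)\,\lambda^{i\mu\cdot\alpha}(\log\lambda)^k=0$ for $\lambda\in\open\Lambda$, $|\lambda|>R$. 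By Proposition~\ref{componentprop}(3) each $b_{\alpha,k}:=\partial_{\bar\lambda}a_{\alpha,k}$ lies in $S^{(-m)}(\Lambda)$, so $b_{\alpha,k}(\varrho^m\lambda)=\varrho^{-m}b_{\alpha,k}(\lambda)$. Fixing $\lambda_0\in\open\Lambda$ with $|\lambda_0|=1$, substituting $\lambda=\varrho^m\lambda_0$ with $\varrho>R^{1/m}$, using $(\varrho^m\lambda_0)^{i\mu\cdot\alpha}=\varrho^{im(\mu\cdot\alpha)}\lambda_0^{i\mu\cdot\alpha}$ and $\log(\varrho^m\lambda_0)=m\log\varrho+\log\lambda_0$, and writing $t=\log\varrho$, the relation becomes $\sum_{\alpha,k}b_{\alpha,k}(\lambda_0)\lambda_0^{i\mu\cdot\alpha}e^{im(\mu\cdot\alpha)t}(mt+\log\lambda_0)^k=0$ for all large $t$. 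Grouping by the finitely many values $\nu=\mu\cdot\alpha$ and using the linear independence over $\C$ of the exponential monomials $t\mapsto e^{im\nu t}t^k$ (the same mechanism as in the proof of Proposition~\ref{componentprop}(5)) yields $\sum_{\alpha:\,\mu\cdot\alpha=\nu}b_{\alpha,k}(\lambda_0)=0$ for every $\nu$ and $k$. Since $\lambda_0$ was arbitrary and the $b_{\alpha,k}$ are homogeneous, this gives $\partial_{\bar\lambda}\bigl(\sum_{\alpha:\,\mu\cdot\alpha=\nu}a_{\alpha,k}\bigr)=0$ on all of $\open\Lambda$, so $c_{\nu,k}:=\sum_{\alpha:\,\mu\cdot\alpha=\nu}a_{\alpha,k}\in S^{(0)}(\Lambda)$ is holomorphic, hence constant.

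To conclude, regroup the numerator: $p_j(\lambda^{i\mu_1},\dots,\log\lambda)=\sum_{\nu,k}\lambda^{i\nu}(\log\lambda)^k\,c_{\nu,k}$. For each occurring $\nu$ pick $\alpha(\nu)\in\N_0^N$ with $\mu\cdot\alpha(\nu)=\nu$, and set $\tilde p_j(z_1,\dots,z_{N+1})=\sum_{\nu,k}c_{\nu,k}\,z_1^{\alpha(\nu)_1}\cdots z_N^{\alpha(\nu)_N}z_{N+1}^{k}\in\C[z_1,\dots,z_{N+1}]$, which has constant coefficients; then $\tilde p_j(\lambda^{i\mu_1},\dots,\lambda^{i\mu_N},\log\lambda)=p_j(\lambda^{i\mu_1},\dots,\log\lambda)$, so $s_j(\lambda)=(\tilde p_j/q_j)(\lambda^{i\mu_1},\dots,\lambda^{i\mu_N},\log\lambda)\,\lambda^{\nu_j/m}$ has the asserted form with $N_j=N$, $\mu_{jk}=\mu_k$, and the same $q_j$ (so property~(a) is unchanged). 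The one genuinely delicate point — and the main obstacle — is the disentanglement of the finite numerator sum: one does \emph{not} get that the individual $a_{\alpha,k}$ are constant, only that the frequency-grouped sums $c_{\nu,k}$ are, and the $\bar\partial$-plus-homogeneity argument is precisely what isolates those; note also that $p_j(\lambda^{i\mu_1},\dots,\log\lambda)$ is, as a function of $\lambda$, \emph{not} holomorphic (its coefficients are merely smooth), which is why one must pass through $\partial_{\bar\lambda}$ rather than argue directly.
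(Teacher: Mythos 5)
Your proof is correct, but it takes a genuinely different route from the paper's. The paper reduces the proposition to Lemma~\ref{UniqAnalCont}, whose proof is a one-step uniqueness-of-analytic-continuation argument: since the coefficients $a_{\alpha,k}\in S^{(0)}(\Lambda)$ are scalar homogeneous of degree zero (hence depend only on $\lambda/|\lambda|$), one freezes them at a direction $\lambda_0/|\lambda_0|$ to obtain a constant-coefficient polynomial $p_0$; then $p_0(\lambda^{i\mu_1},\dots,\log\lambda)$ is holomorphic, agrees with $p_j(\lambda^{i\mu_1},\dots,\log\lambda)$ along the ray through $\lambda_0$, and therefore agrees with it everywhere on $\open\Lambda$ by uniqueness of analytic continuation. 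Your argument instead applies $\partial_{\bar\lambda}$, uses the homogeneity shift $\partial_{\bar\lambda}\colon S^{(0)}(\Lambda)\to S^{(-m)}(\Lambda)$, restricts to rays, and invokes linear independence of the exponential monomials $e^{im\nu t}t^k$ (distinct real $\nu$) to conclude that the frequency-grouped sums $c_{\nu,k}=\sum_{\alpha:\,\mu\cdot\alpha=\nu}a_{\alpha,k}$ are holomorphic and, being in $S^{(0)}(\Lambda)$, constant; you then reassemble $\tilde p_j$ from the $c_{\nu,k}$. This is more work than the paper's two-sentence reduction, but it buys a finer structural statement: it identifies precisely which linear combinations of the original coefficients are forced to be constant, rather than merely asserting that some constant-coefficient polynomial exists. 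One small inaccuracy in attribution: the linear-independence step you describe as "the same mechanism as in the proof of Proposition~\ref{componentprop}(5)" is not the argument given there — that proof reduces to rationally independent phases and uses density of the curve $\varrho\mapsto(\varrho^{i\mu_1},\dots,\varrho^{i\mu_N})$ on the $N$-torus, not linear independence of $e^{im\nu t}t^k$. The fact you actually need is standard and true, but it is not what Proposition~\ref{componentprop}(5) proves.
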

\begin{proof}
We already know that the components $s_j(\lambda)$ are holomorphic.
We just need to show that in this case the numerator polynomials $p$ in
Definition~\ref{componentsdef} can be chosen to have constant
coefficients rather than homogeneous coefficient functions.
This, however, follows from Lemma~\ref{UniqAnalCont} below.
\end{proof}

\begin{lemma}\label{UniqAnalCont}
Let $f_1(\lambda),\ldots,f_M(\lambda)$ be holomorphic functions on
$\Lambda\setminus\{0\}$, and let $p \in S^{(0)}(\Lambda)[z_1,\ldots,z_M]$.
Assume that the function $p(f_1(\lambda),\ldots,f_M(\lambda))$ is holomorphic
on $\open\Lambda$, except possibly on a discrete set.

Then there is a polynomial $p_0 \in \C[z_1,\ldots,z_M]$ with constant coefficients
such that
$$
p(f_1(\lambda),\ldots,f_M(\lambda)) = p_0(f_1(\lambda),\ldots,f_M(\lambda))
$$
as functions on $\Lambda\setminus\{0\}$.
\end{lemma}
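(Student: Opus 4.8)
The idea is to obtain $p_0$ by freezing the degree-zero homogeneous coefficient functions of $p$ at a single point of $\open\Lambda$, and then to promote the resulting identity from one ray to all of $\Lambda\setminus\{0\}$ using the one-variable identity theorem together with continuity.

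Concretely, I would first write $p(z_1,\dots,z_M)=\sum_{|\alpha|\le M}a_\alpha(\lambda)z^\alpha$ with coefficient functions $a_\alpha\in S^{(0)}(\Lambda)$, so that each $a_\alpha$ is smooth on $\Lambda\setminus\{0\}$ and constant along rays from the origin. Choose $\lambda_0\in\open\Lambda$ with $|\lambda_0|=1$ (possible since $\open\Lambda\ne\emptyset$), put $p_0(z)=\sum_{|\alpha|\le M}a_\alpha(\lambda_0)z^\alpha\in\C[z_1,\dots,z_M]$, and set $\Phi=(f_1,\dots,f_M)$, which is a holomorphic map on $\open\Lambda$ and continuous on $\Lambda\setminus\{0\}$. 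Along the open ray $R_{\lambda_0}=\{r\lambda_0:r>0\}\subset\open\Lambda$ the homogeneity of the $a_\alpha$ gives $a_\alpha(r\lambda_0)=a_\alpha(\lambda_0)$, whence $p(f_1(r\lambda_0),\dots,f_M(r\lambda_0))=p_0(\Phi(r\lambda_0))$ for every $r>0$; that is, the functions $\lambda\mapsto p(f_1(\lambda),\dots,f_M(\lambda))$ and $\lambda\mapsto p_0(\Phi(\lambda))$ coincide on $R_{\lambda_0}$.

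Next I would let $\Sigma\subset\open\Lambda$ be the discrete set outside of which $p(f_1(\lambda),\dots,f_M(\lambda))$ is holomorphic, and set $U=\open\Lambda\setminus\Sigma$. Since a sector has convex, hence connected, interior, and deleting a discrete set from a connected planar open set leaves it connected and open, $U$ is a connected open subset of $\C$. Both $p(f_1(\lambda),\dots,f_M(\lambda))$ and $p_0(\Phi(\lambda))$ are holomorphic on $U$, and they agree on $R_{\lambda_0}\cap U$, which is a half-line with a discrete set removed and therefore has accumulation points in $U$. By the identity theorem for holomorphic functions of one complex variable, they agree on all of $U$. Finally, both functions are continuous on $\Lambda\setminus\{0\}$ — the $a_\alpha$ lie in $C^{\infty}(\Lambda\setminus\{0\})$ and the $f_j$ are continuous there — and $U$ is dense in $\Lambda\setminus\{0\}$, so the equality $p(f_1(\lambda),\dots,f_M(\lambda))=p_0(f_1(\lambda),\dots,f_M(\lambda))$ extends to all of $\Lambda\setminus\{0\}$, which is the assertion.

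There is no serious obstacle here; the only points needing a word of care are the connectedness of $U$, so that the identity theorem applies on a single domain, and the observation that a ray — although not a complex-analytic subvariety — already has accumulation points in $U$ and is therefore a uniqueness set for holomorphic functions of one complex variable. The rest is bookkeeping with the homogeneity of the coefficients of $p$; note in particular that no relations among the $f_j$, and no rational independence of any exponents, enter, since the whole argument takes place at the level of values of the composite function.
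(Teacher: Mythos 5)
Your argument is correct and follows essentially the same route as the paper: freeze the degree-zero homogeneous coefficients of $p$ at a base point $\lambda_0$, observe that equality holds on the ray through $\lambda_0$ by homogeneity, propagate to all of the interior by the identity theorem, and finish by continuity. The only cosmetic difference is that the paper first removes the discrete singular set (noting the singularities are removable since the composite is continuous), whereas you run the identity theorem on $\open\Lambda$ minus the discrete set and then pass to $\Lambda\setminus\{0\}$ by density; both handle the point with the same level of care.
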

\begin{proof}
Since all singularities are removable, we know that
$p(f_1(\lambda),\ldots,f_M(\lambda))$ is holomorphic everywhere on $\open\Lambda$.
We have
\begin{align*}
p(f_1(\lambda),\ldots,f_M(\lambda)) &= \sum\limits_{|\alpha| \leq D}
a_{\alpha}(\lambda/|\lambda|)f_1(\lambda)^{\alpha_1}\cdots f_M(\lambda)^{\alpha_M}. \\
\intertext{Let $\lambda_0 \in \open\Lambda$. Define}
p_0(z_1,\ldots,z_M) &= \sum\limits_{|\alpha| \leq D}a_{\alpha}(\lambda_0/|\lambda_0|)
z_1^{\alpha_1}\cdots z_M^{\alpha_M}
\end{align*}
Then clearly
$$
p(f_1(\lambda),\ldots,f_M(\lambda)) = p_0(f_1(\lambda),\ldots,f_M(\lambda))
$$
on the ray through $\lambda_0$. By uniqueness of analytic continuation this
equality necessarily holds everywhere on $\open\Lambda$, and by continuity
then also on $\Lambda\setminus\{0\}$.
\end{proof}
\end{appendix}



\begin{thebibliography}{10}

\bibitem{BrSe85}
J.~Br\"uning and R.~Seeley, \emph{Regular singular asymptotics},
Adv. in Math.  \textbf{58} (1985), 133--148.

\bibitem{BrSe87}
\bysame, \emph{The resolvent expansion for second order regular singular operators}, J. Funct. Anal. \textbf{73} (2) (1987) 369--429. 

\bibitem{BrSe91}
\bysame, \emph{The expansion of the resolvent near a singular stratum of conical type}, J. Funct. Anal. \textbf{95} (1991), no. 2, 255--290.

\bibitem{Ca83}
C.~Callias, \emph{The heat equation with singular coefficients I. Operators of the form $-d^{2}/dx^{2}+k/x^{2}$ in dimension 1}, Comm. Math. Phys. \textbf{88} (3) (1983) 357--385.

\bibitem{Ch79}
J.~Cheeger, \emph{On the spectral geometry of spaces with cone-like singularities}, Proc. Nat. Acad. Sci. USA \textbf{76} (1979), 2103--2106.

\bibitem{FMP04}
H.~Falomir, M.A.~Muschietti, P.A.G.~Pisani, \emph{On the resolvent and spectral functions of a second order differential operator with a regular singularity}, J. Math. Phys. \textbf{45} (12) (2004) 4560--4577.

\bibitem{FMPSeeley}
H.~Falomir, M.A.~Muschietti, P.A.G.~Pisani, and R.~Seeley,  
\emph{Unusual poles of the $\zeta$-functions for some regular singular differential operators}, J. Phys. A \textbf{36} (2003), no. 39, 9991--10010.

\bibitem{FPW}
H.~Falomir, P.A.G.~Pisani, and A.~Wipf, \emph{Pole structure of the Hamiltonian $\zeta$-function for a singular potential}, J. Phys. A \textbf{35} (2002), 5427--5444.

\bibitem{GilThesis}
J.~Gil, \emph{Heat trace asymptotics for cone differential operators}, Ph.D. thesis,
Universit{\"a}t Potsdam, 1998.

\bibitem{GiHeat01}
\bysame, \emph{Full asymptotic expansion of the heat trace for non-self-adjoint elliptic cone operators}, Math. Nachr. \textbf{250} (2003), 25--57.

\bibitem{GKM1}
J.~Gil, T.~Krainer, and G.~Mendoza, \emph{Geometry and spectra of 
closed extensions of elliptic cone operators}, 
Canad. J. Math. \textbf{59} (2007), no. 4, 742--794.

\bibitem{GKM2}
\bysame, \emph{Resolvents of elliptic cone operators}, J. Funct. Anal. \textbf{241} (2006), 
no. 1, 1--55.

\bibitem{GKM3}
\bysame, \emph{On rays of minimal growth for elliptic cone operators}, Oper.~Theory Adv.~Appl. \textbf{172} (2007), 33--50.

\bibitem{GKM5a}
\bysame, \emph{Trace expansions for elliptic cone operators with stationary domains}, submitted.

\bibitem{GiLo08}
J.~Gil and P.~Loya, \emph{Resolvents of cone pseudodifferential operators, asymptotic expansions and applications}, Math. Z. \textbf{259} (2008), no. 1, 65--95. 

\bibitem{GiMe01}
J.~Gil and G.~Mendoza, \emph{Adjoints of elliptic cone operators},
Amer. J. Math. \textbf{125} (2003) 2, 357--408.

\bibitem{Gilkey}
P.~Gilkey, \emph{Invariance theory, the heat equation, and the {A}tiyah-{S}inger index theorem}, CRC Press, Boca Raton, Ann Arbor, 1996, second edition.

\bibitem{GruSee95}
G.~Grubb and R.~Seeley, \emph{Weakly parametric pseudodifferential operators and Atiyah--Patodi--Singer boundary problems}, Invent. Math. \textbf{121}, 481--529 (1995). 

\bibitem{KLP06}
K.~Kirsten, P.~Loya, and J.~Park, \emph{The very unusual properties of the resolvent, heat kernel, and zeta function for the operator $-d\sp 2/dr\sp 2-1/(4r\sp 2)$}, 
J. Math. Phys. \textbf{47} (2006), no. 4, 043506, 27 pp.

\bibitem{KLP08a}
\bysame, \emph{Functional determinants for general self-adjoint extensions of Laplace-type operators resulting from the generalized cone}, Manuscripta Math. \textbf{125} (2008), no. 1, 95--126.

\bibitem{KLP08b}
\bysame, \emph{Exotic expansions and pathological properties of $\zeta$-functions on conic manifolds}, J. Geom. Anal. \textbf{18} (2008), no. 3, 835--888.

\bibitem{Le97} 
M.~Lesch, \emph{Operators of {F}uchs type, conical singularities, and asymptotic methods}, Teubner-Texte zur Math. vol 136, B.G. Teubner, Stuttgart, Leipzig, 1997.

\bibitem{LoRes01}
P.~Loya, \emph{On the resolvent of differential operators on conic manifolds}, 
Comm. Anal. Geom. \textbf{10} (2002), no.~5, 877--934.

\bibitem{LoRes201}
\bysame, \emph{Parameter-dependent operators and resolvent expansions on conic manifolds}, Illinois J. Math. \textbf{46} (2002), no.~4, 1035--1059.

\bibitem{LMP07}
P.~Loya, P.~McDonald, and J.~Park, \emph{Zeta regularized determinants for conic manifolds},  J. Funct. Anal. \textbf{242} (2007), no. 1, 195--229.

\bibitem{RBM2} 
R.~Melrose, \emph{The Atiyah-Patodi-Singer index theorem}, Research Notes in Mathematics, A~K~Peters, Ltd., Wellesley, MA, 1993.

\bibitem{Mooers}
E.~Mooers, \emph{Heat kernel asymptotics on manifolds with conic singularities}, J. Anal. Math. \textbf{78} (1999) 1--36.

\bibitem{SchulzeNH} 
B.--W. Schulze, \emph{Pseudo-differential operators on manifolds with singularities}, Studies in Mathematics and its Applications, 24. North-Holland Publishing Co., Amsterdam, 1991.

\bibitem{Seeley}
R.~Seeley, \emph{Complex powers of an elliptic operator}, Singular Integrals, AMS Proc. Symp. Pure Math. X, 1966, Amer. Math. Soc., Providence, 1967, pp.~288--307.

\end{thebibliography}
\end{document}